\documentclass[11pt,a4paper]{amsart}
\usepackage[all]{xy}
\SelectTips{cm}{}
\calclayout

\usepackage{amscd,mathtools,amssymb,amsopn,amsmath,amsthm,graphics,mathrsfs,accents}
\usepackage[colorlinks=true,linkcolor=red,citecolor=blue]{hyperref}
\usepackage[mathscr]{euscript}
\usepackage[usenames]{color}

\newcommand{\rt}{\rightarrow}

\newcommand{\lrt}{\longrightarrow}
\newcommand{\lf}{\leftarrow}
\newcommand{\llf}{\longleftarrow}

\newcommand{\st}{\stackrel}

\newcommand{\Md}{\mathsf{Mod}}

\def\ker{\operatorname{\mathsf{ker}}}

\newcommand{\Q}{\mathcal{Q} }

\newcommand{\C}{\mathscr{C} }

\newcommand{\G}{\mathcal{G} }

\newcommand{\m}{\mathfrak{m}}

\newcommand{\p}{\mathfrak{p} }

\def\proj{\operatorname{\mathsf{proj}}}

\def\Ext{\operatorname{{\mathsf{Ext}}}}

\def\hom{\operatorname{{\mathsf{Hom}}}}
\def\uhom{\operatorname{\underline{\mathsf{Hom}}}}

\def\Tor{\operatorname{\mathsf{Tor}}}
\def\hom{\operatorname{\mathsf{Hom}}}

\def\X{\mathcal{X}}

\def\G{\mathcal{G}}

\def\md{\operatorname{\mathsf{mod}}}

\def\syz{\mathsf{\Omega}}
\def\coh{\operatorname{\mathsf{coh}}}

\def\tr{\mathsf {Tr}}

\def\p{\mathcal P}
\def\A{\mathcal A}

\def\U{\mathcal U}

\def\ruf{\operatorname{\mathsf{RUF}}}
\def\luf{\operatorname{\mathsf{LUF}}}

\def\al{\operatorname{\boldsymbol{\alpha}}}
\def\be{\operatorname{\boldsymbol{\beta}}}
\def\ga{\operatorname{\boldsymbol{\gamma}}}

\def\ep{\operatorname{\boldsymbol{\epsilon}}}
\def\de{\operatorname{\boldsymbol{\delta}}}
\def\et{\operatorname{\boldsymbol{\eta}}}

\newcommand{\nat}{{\mathsf{Nat}}}

\newtheorem{theorem}{Theorem}[section]

\newtheorem{cor}[theorem]{Corollary}

\newtheorem{lem}[theorem]{Lemma}
\newtheorem{prop}[theorem]{Proposition}

\theoremstyle{definition}
\newtheorem{dfn}[theorem]{Definition}
\newtheorem{example}[theorem]{Example}

\newtheorem{rem}[theorem]{Remark}
\newtheorem{s}[theorem]{}

\theoremstyle{plain}

\theoremstyle{definition}

\numberwithin{equation}{section}

\begin{document}

\title[On the natural transformations of extension functors]
{On the natural transformations of extension functors}
\author[Bahlekeh and Salarian]{Abdolnaser Bahlekeh and Shokrollah Salarian}

\address{Department of Mathematics, Gonbad-Kavous University, Postal Code:4971799151, Gonbad-Kavous, Iran}
\email{bahlekeh@gonbad.ac.ir}

\address{Department of Pure Mathematics, Faculty of Mathematics and Statistics,
University of Isfahan, P.O.Box: 81746-73441, Isfahan,
 Iran and \\ School of Mathematics, Institute for Research in Fundamental Science (IPM), P.O.Box: 19395-5746, Tehran, Iran}
 \email{Salarian@ipm.ir}

\subjclass[2020]{18A25, 18G15.}

\keywords{natural transformation; extension functor; Hilton-Rees Theorem.}
\thanks{}

\begin{abstract}
Assume that $\C$ is an exact category.
This paper is concerned with the natural transformations between extension functors on $\C$.  The first main result indicates that if $\C$ has enough projective objects, then for any pair of objects $M, N\in \C$ and any non-negative integer $n$, the group of all natural transformations from $\Ext^{n+1}_{\C}(N, -)$ to $\Ext^{n+1}_{\C}(M, -)$ is isomorphic to the quotient group $\Ext^n_{\C}(M, \syz^nN)/{\p}$, where $\p$ is the subgroup consisting of those extensions of length $n$ arising as a push-out along a morphism $P\rt\syz^nN$, with $P$ projective. This, together with the Auslander-Gruson-Jensen duality yields that if $\C$ is the category of all finitely presented left modules over an associative ring $R$, then the quotient group is isomorphic to the natural transformations from $\Tor_{n+1}^R(-, M)$ to $\Tor_{n+1}^R(-, N)$. The second main result proves  that if $\C$ is an $n$-Frobenuis category, then the statement of the first result remains true, whenever projectives are replaced by $n$-projectives. This result is fruitful from the point of view that, $n$-Frobenius categories may not have projective objects. These results provide a far-reaching generalization of the Hilton-Rees theorem, in the sense that the case $n=0$, recover the Hilton-Rees theorem.
\end{abstract}
\maketitle

\tableofcontents

\section{Introduction}
Assume that $\A$ is an abelian category and $\C$ an extension-closed full additive subcategory of $\A$. So, it will be an exact category in the sense of Quillen, see \cite[Lemma 10.20]{buh}. One of the most important results concerning natural transformations of extension functors on $\C$, is due to  Hilton-Rees \cite{hr}. To be precise, assume that $\C$ has enough projective objects, and $f: M\rt N$ is a morphism in $\C$. Then $\Psi_f:\Ext^1_{\C}(N, -)\lrt\Ext^1_{\C}(M, -)$, in which for any object $T\in\C$ and $\al\in\Ext^1_{\C}(N, T)$, $\Psi_f(T)(\al)=\al f$, is a natural transformation of extension functors, where $\al f$ is the pull-back of $\al$ along $f$. Now, the Hilton-Rees theorem says that the assignment
$$\uhom_{\C}(M, N)\lrt\nat(\Ext^1_{\C}(N, -), \Ext^1_{\C}(M, -)):~~~\bar{f}\mapsto\Psi_f$$is an isomorphism of abelian groups,  see  \cite[Theorem 7]{mart}. Here $\uhom_{\C}(M, N)$ is the group of morphisms from $M$ to $N$ modulo the morphisms factoring through projectives, and  $\nat(\Ext^1_{\C}(N, -), \Ext^1_{\C}(M, -))$ stands for the group of all natural transformations from $\Ext^1_{\C}(N, -)$ to $\Ext^1_{\C}(M, -)$. This, in particular, yields that the latter forms a set. The first aim of this paper is to give a far-reaching generalization of the Hilton-Rees theorem.
Assume that $n$ is a non-negative integer. It is proved in Proposition \ref{1} that, for any pair of objects $N, T\in\C$, and $\al\in\Ext^{n+1}_{\C}(N, T)$, there exists an extension $\alpha_1\in\Ext^1_{\C}(\syz^nN, T)$ and a morphism of extensions $\alpha_1\de_N\lrt\al$, with fixed ends, where $\de_N\in\Ext^n_{\C}(N, \syz^nN)$ is a syzygy sequence. Next, for a given $\ga\in\Ext^n_{\C}(M, \syz^nN)$, we prove that $\Psi_{\ga}:\Ext^{n+1}_{\C}(N, -)\lrt\Ext^{n+1}_{\C}(M, -)$, sending each $\al\in\Ext^{n+1}_{\C}(N, T)$ to $\alpha_1\ga$, is a natural transformation of extension functors, see Corollary \ref{c2}. Now, our first main theorem reads as follows.

\begin{theorem}\label{main1}(Theorem \ref{t2}) Keeping the notation above, the correspondence
$$\Psi:\Ext^n_{\C}(M, \syz^nN)\lrt\nat(\Ext^{n+1}_{\C}(N, -), \Ext^{n+1}_{\C}(M, -)):~~~\ga\mapsto\Psi_{\ga}$$
induces an isomorphism of abelian groups
$$\bar{\Psi}:\Ext^n_{\C}(M, \syz^nN)/{\p}\lrt\nat(\Ext^{n+1}_{\C}(N, -), \Ext^{n+1}_{\C}(M, -)),$$
where $\p$ is the subgroup consisting of those extensions arising as a push-out along a morphism $f: P\rt \syz^nN$ in $\C$, with $P$ projective.
\end{theorem}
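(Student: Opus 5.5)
The plan is to reduce the statement to the Hilton--Rees theorem (the case $n=0$) by dimension shifting along the syzygy sequence $\de_N$. Fix a projective resolution of $N$ and write $\de_N\in\Ext^n_{\C}(N,\syz^nN)$ for the corresponding syzygy sequence. Since $\C$ has enough projectives, Proposition \ref{1} gives a surjection $\Ext^1_{\C}(\syz^nN,T)\to\Ext^{n+1}_{\C}(N,T)$, $\alpha_1\mapsto\alpha_1\de_N$. Its kernel consists of those $\alpha_1$ that are pull-backs of extensions along $\syz^nN\to P_{n-1}$, namely the image of $\hom(P_{n-1},T)$ under the connecting map. Corollary \ref{c2} already shows that each $\Psi_\ga$ is a well-defined natural transformation, so $\Psi$ is well-defined as a map. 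Additivity of $\Psi$ follows from bilinearity of Yoneda composition, since $\alpha_1(\ga+\ga')=\alpha_1\ga+\alpha_1\ga'$.

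\textbf{Step 1: $\p\subseteq\ker\Psi$.} Let $\ga=f\ga_0$ be the push-out of some $\ga_0\in\Ext^n_{\C}(M,P)$ along $f:P\to\syz^nN$ with $P$ projective. Then $\alpha_1\ga=(\alpha_1 f)\ga_0$, and $\alpha_1f\in\Ext^1_{\C}(P,T)=0$. Hence $\Psi_\ga=0$.

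\textbf{Step 2: surjectivity.} Let $\eta:\Ext^{n+1}_{\C}(N,-)\to\Ext^{n+1}_{\C}(M,-)$ be a natural transformation. Apply it to the universal element. Let $\epsilon\in\Ext^1_{\C}(\syz^nN,P_{n-1})$ be the short exact sequence $0\to\syz^nN\to P_{n-1}\to\syz^{n-1}N\to0$, so that $\epsilon\de'=\de_N$-type splicing holds, and take $\alpha_1$ ranging over $\Ext^1(\syz^nN,T)$.

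It is more convenient to precompose with the natural epimorphism $\pi:\Ext^1_{\C}(\syz^nN,-)\to\Ext^{n+1}_{\C}(N,-)$, $\alpha_1\mapsto\alpha_1\de_N$, which is natural in $T$. The composite $\eta\pi:\Ext^1(\syz^nN,-)\to\Ext^{n+1}(M,-)$ is then determined by a Yoneda-type argument. Embed $\syz^nN\to I$ via any admissible monic? The category need not have injectives, so instead I would dimension-shift in $M$. Choose the syzygy sequence $\de_M\in\Ext^n(M,\syz^nM)$ and use Proposition \ref{1} for $M$: every element of $\Ext^{n+1}(M,T)$ has the form $\beta_1\de_M$. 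By Hilton--Rees-style reasoning applied to the functor $\eta\pi$, evaluate at $T=\syz^{n+1}$... Rather, the cleanest route is to evaluate at $T=\syz^{n+1}N$, or more precisely at the tautological class. Take $\xi\in\Ext^1(\syz^nN,\syz^{n+1}N)$ to be the next syzygy sequence $0\to\syz^{n+1}N\to P_n\to\syz^nN\to0$. Every $\alpha_1\in\Ext^1(\syz^nN,T)$ equals $g\xi$ for some $g:\syz^{n+1}N\to T$, because $P_n$ is projective. By naturality, $\eta\pi(\alpha_1)=g\cdot\eta(\xi\de_N)$. Write $\eta(\xi\de_N)\in\Ext^{n+1}(M,\syz^{n+1}N)$ as a Yoneda class $\theta$.

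The key claim is that $\theta=\xi\ga$ for some $\ga\in\Ext^n(M,\syz^nN)$. I would prove it by naturality applied to the map $\iota:\syz^{n+1}N\to P_n$. Since $\iota\xi=0$, we get $\iota\theta=0$ in $\Ext^{n+1}(M,P_n)$. The long exact sequence in the second variable for $\xi$ gives exactness of $\Ext^n(M,\syz^nN)\to\Ext^{n+1}(M,\syz^{n+1}N)\to\Ext^{n+1}(M,P_n)$, so $\theta=\xi\ga$. Then $\eta(\alpha_1\de_N)=g\xi\ga=\alpha_1\ga=\Psi_\ga(\alpha_1\de_N)$, giving $\eta=\Psi_\ga$.

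\textbf{Step 3: $\ker\Psi\subseteq\p$.} If $\Psi_\ga=0$, then in particular $\xi\ga=0$. The same long exact sequence (the preceding term $\Ext^n(M,P_n)\to\Ext^n(M,\syz^nN)$, induced by $P_n\to\syz^nN$) shows that $\ga$ is the push-out of some class along $P_n\to\syz^nN$, so $\ga\in\p$. In particular $\p$ is a subgroup, being the image of a homomorphism for a fixed projective cover. A general element of $\p$, given by some $f:P\to\syz^nN$, factors through $P_n$, since $P_n\to\syz^nN$ is an admissible epi and $P$ is projective, so the description is consistent.

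The main obstacle is justifying these long exact sequences of Yoneda $\Ext$ in an exact category, together with the independence of $\Psi_\ga$ from the chosen $\alpha_1$. The latter is presumably handled in Corollary \ref{c2}; I would lean on it and on the standard long exact sequence for Yoneda Ext in exact categories (Buhler) for the former. The case $n=0$ should visibly recover Hilton--Rees, which serves as a sanity check.
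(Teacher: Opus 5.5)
Your proposal is correct and follows essentially the paper's route: evaluating the natural transformation on the length-$(n+1)$ syzygy class $\xi\de_N$, using naturality along $\syz^{n+1}N\to P_n$ and the long exact sequence to write that value as $\xi\ga$, and then extending to arbitrary $T$ by lifting along $\xi$ is exactly Proposition \ref{epi1} plus the surjectivity argument of Theorem \ref{t2}, and your kernel computation is Lemma \ref{p}. One harmless slip is your description of the kernel of $\alpha_1\mapsto\alpha_1\de_N$, which is garbled: since $\Ext^1_{\C}(P_i,T)=0$, dimension shifting shows this map is injective and hence an isomorphism, and nothing in your argument depends on the incorrect description.
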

It should be noted that  since $\Ext^0_{\C}(M, \syz^0N)=\hom_{\C}(M, N)$, the subgroup $\p$ will be those morphisms $M\rt N$ that factor through some projective objects. This  implies that the case $n=0$ of the above theorem recovers the Hilton-Rees theorem, see Corollary \ref{c33}. Moreover, Theorem \ref{main1}, in conjunction with the Auslander \cite{aus1} and Gruson-Jensen \cite{gj} duality, yields that in the special case when $\C=R$-$\md$, the category of all finitely presented left modules over an associative ring $R$, then the group $\Ext^n_R(M, \syz^nN)/{\p}$ is isomorphic to $\nat(\Tor_{n+1}^R(-, M), \Tor_{n+1}^R(-, N))$, see Corollary \ref{agj}. Particularly, Corollary \ref{vit} reveals that if $\Ext^i_R(M, R)=0$ for all $1\leq i\leq n$, then $\Ext^n_{R}(M, \syz^nN)/{\p}$ is isomorphic to $\uhom_R(M, N)$. These results will be given in Section 3.

The second aim of the paper is to deal with the natural transformations between extension functors over $n$-Frobenius categories. The exact category $\C$ is said to be an $n$-Frobenius category, with $n$ a non-negative integer, if  $\C$ has enough $n$-projective and $n$-injective objects and these two classes of objects coincide, where the notions of $n$-projectivity and $n$-injectivity are defined via the vanishing of $\Ext^{n+1}_{\C}(-, -)$, $(n+1)$-th Yoneda $\Ext$. So, 0-Frobenius categories are exactly Frobenius categories in the usual sense. It is worth noting that $n$-Frobenius categories need not have projective objects, see Example \ref{ex}(3, 6).

Now assume that $\C$ is an $n$-Frobenius category. The second main theorem of the paper asserts that, by replacing projectives with $n$-projectives in Theorem \ref{main1}, the result still remains true. Precisely, we have the result below.

\begin{theorem}\label{main2}(Theorem \ref{nat} and Corollary \ref{ccc}) Let $\C$ be an $n$-Frobenius category and let $M, N$ be objects of $\C$. Then for any  integer $k\geq n$ and a syzygy sequence $\de_N\in\Ext^k_{\C}(N, \syz^kN)$ with respect to $n$-pojectives,  one has   isomorphisms of abelian groups
\[\begin{array}{lllll}
\Ext^k_{\C}(M, \syz^kN)/{\p} & \cong \nat(\Ext^{k+1}_{\C}(N, -), \Ext^{k+1}_{\C}(M, -))\\
& \cong \nat(\Ext^{k+1}_{\C}(-, M), \Ext^{k+1}_{\C}(-, N)).
\end{array}\]
Here $\p$ is the subgroup consisting of those extensions arising as a push-out along a morphism $f:P\rt\syz^kN$, with $P$ $n$-projective.
\end{theorem}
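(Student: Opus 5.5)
Since $\C$ is $n$-Frobenius, there is a syzygy sequence $\de_N\in\Ext^k_{\C}(N,\syz^kN)$ built from conflations $0\rt\syz^{i+1}N\rt Q_i\rt\syz^iN\rt0$ with each $Q_i$ $n$-projective (equivalently $n$-injective). The plan is to mimic the proof of Theorem \ref{main1} with ``projective'' replaced by ``$n$-projective'' and to use $k\geq n$ to get dimension shifting. The key point is this. If $Q$ is $n$-projective, then $\Ext^{j}_{\C}(Q,-)=0$ for all $j\geq n+1$, because a Yoneda $j$-extension factors as a product of an $(n+1)$-extension starting at $Q$ and further extensions. Since $k+1\geq n+1$, the long exact sequences then give that the connecting maps
$$\Ext^1_{\C}(\syz^kN,T)\lrt\Ext^{k+1}_{\C}(N,T),\qquad \alpha_1\mapsto\alpha_1\de_N,$$
are surjective, by the analogue of Proposition \ref{1}. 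Their kernel is the image of $\hom_{\C}(Q_{k-1},T)$. Thus $\Ext^{k+1}_{\C}(N,-)\cong\Ext^1_{\C}(\syz^kN,-)/\mathrm{Im}\,\hom(Q_{k-1},-)$, naturally in $T$.

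\textbf{Step 1: well-defined maps.} Define $\Psi_{\ga}(\al)=\alpha_1\ga$ for $\ga\in\Ext^k_{\C}(M,\syz^kN)$. To see this is independent of the lift $\alpha_1$, note that two lifts differ by a pushout along a map $Q_{k-1}\rt T$ restricted to $\syz^kN$. Hence their difference times $\ga$ is a pushout of $\epsilon\ga$, where $\epsilon$ is the conflation $0\rt\syz^kN\rt Q_{k-1}\rt\syz^{k-1}N\rt0$. The class $\epsilon\ga$ is a $(k+1)$-extension whose left end is the $n$-injective object $Q_{k-1}$. It vanishes because $\Ext^{k+1}_{\C}(M,Q_{k-1})=0$, using $n$-injectivity and $k+1\geq n+1$. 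Naturality and additivity then follow as in Corollary \ref{c2}. For $\ga\in\p$, write $\ga=f\ga'$ with $f:P\rt\syz^kN$ and $P$ $n$-projective. Then $\alpha_1\ga=(\alpha_1 f)\ga'$, and $\alpha_1f\in\Ext^1_{\C}(P,T)$. I must show this contribution is zero. One route is to replace $\alpha_1$ by a lift that kills $P$. The expected route is via the $(k+1)$-extension $\alpha_1f\ga'$, whose ``$n+1$ part'' starts at $P$ and hence vanishes. This is where the hypothesis $k\geq n$ is really used, and I would check it carefully.

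\textbf{Step 2: surjectivity.} Given a natural transformation $\eta$, apply it to the universal element. That is, take $T=\syz^{k+1}N$ and the class $\de'\in\Ext^{k+1}_{\C}(N,\syz^{k+1}N)$ obtained by splicing the next syzygy conflation $\epsilon'$ with $\de_N$. Then $\eta(\de')\in\Ext^{k+1}_{\C}(M,\syz^{k+1}N)$ lifts, by the analogue of Proposition \ref{1} for $M$, and dimension shifting along $\epsilon'$ gives an element $\ga\in\Ext^k_{\C}(M,\syz^kN)$ with $\epsilon'\ga=\eta(\de')$. By the Yoneda argument, every $\al$ equals $\phi\de'$ for some map $\phi:\syz^{k+1}N\rt T$; indeed $\al=\alpha_1\de_N$ and $\alpha_1$ is a pushout of $\epsilon'$ because $Q_k$ is $n$-projective and $\Ext^1$-shifting applies. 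Naturality then gives $\eta=\Psi_{\ga}$.

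\textbf{Step 3: kernel.} If $\Psi_{\ga}=0$, then $\epsilon'\ga=0$ in $\Ext^{k+1}_{\C}(M,\syz^{k+1}N)$. The long exact sequence for $\epsilon'$ in the second variable shows that $\ga$ lies in the image of $\Ext^k_{\C}(M,Q_k)\rt\Ext^k_{\C}(M,\syz^kN)$, i.e.\ $\ga$ is a pushout along $Q_k\rt\syz^kN$ with $Q_k$ $n$-projective. So $\ga\in\p$, and $\p$ is a subgroup. This yields the first isomorphism.

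\textbf{Step 4: contravariant version.} Apply the dual argument, using cosyzygies with respect to $n$-injectives, which coincide with the $n$-projectives, to get $\nat(\Ext^{k+1}(-,M),\Ext^{k+1}(-,N))\cong\Ext^k_{\C}(\Sigma^kM,N)/\mathcal I$. Then identify this group with $\Ext^k_{\C}(M,\syz^kN)/\p$. Both are computed by splicing with $\de_N$ and the dual cosyzygy sequence into $\Ext^{2k}$-type groups and are stable Hom-type invariants. Concretely, I would show that both equal the stable group $\Ext^{k}$ modulo the class of maps factoring through $n$-projectives, via the isomorphisms $\Ext^{k+j}$ shifting. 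I expect this identification, together with the vanishing claim in Step 1, to be the main obstacle.
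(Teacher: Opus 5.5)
\textbf{Main gap: Steps 1 and 2 rely on a lifting property that $n$-projectives do not have.} You fix one syzygy sequence $\de_N$ and claim that every $\al\in\Ext^{k+1}_{\C}(N,T)$ has the form $\alpha_1\de_N$.

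\emph{Why the argument does not prove this.} The map $\alpha_1\mapsto\alpha_1\de_N$ is the composite of the connecting maps $\Ext^{j}_{\C}(\syz^{k-j+1}N,T)\to\Ext^{j+1}_{\C}(\syz^{k-j}N,T)$ for $1\le j\le k$. The cokernel of the $j$-th map embeds in $\Ext^{j+1}_{\C}(Q_{k-j},T)$. $n$-projectivity kills this group only when $j\ge n$. So your argument works for $n\le 1$, but says nothing about the steps landing in degrees $2,\dots,n$.

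\emph{The claim is false for $n\ge2$.} Let $R=\kappa[[x,y,z]]/(f)$ with $0\ne f\in(x,y,z)^2$; then $\md R$ is $2$-Frobenius (Example~\ref{ex}(2)). Take the Auslander--Buchweitz hull $0\to\kappa\to Y\to L\to 0$ of the residue field, where $\mathrm{pd}\,Y\le 2$ and $L$ is maximal Cohen--Macaulay. Take $L\to P_0\to N$ with $P_0$ free, and $Q_1=Y\oplus P\to L$ with $P$ free. Then $\syz^2N\to Q_1\to P_0\to N$ is a syzygy sequence with $2$-projective terms.
\begin{itemize}
\item We have $\Ext^3_R(N,\kappa)\cong\Ext^2_R(L,\kappa)$, and inside it the image of $\Ext^1_R(\syz^2N,\kappa)$ is $\ker(\Ext^2_R(L,\kappa)\to\Ext^2_R(Y,\kappa))$.
\item By the hull sequence, this kernel is the image of $\Ext^1_R(\kappa,\kappa)$, so it has dimension at most $3$.
\item On the other hand $\dim_\kappa\Ext^2_R(L,\kappa)=\beta^R_4(L)=\beta^R_3(\kappa)=4$, by $2$-periodicity of MCM resolutions over a hypersurface and $\mathrm{pd}\,Y\le 2$.
\end{itemize}
So the map is not onto.

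\emph{Step 2 has the same problem, already for $n\ge1$.} To say that $\alpha_1$ is a push-out of $\epsilon'$, you need $\Ext^1_{\C}(\syz^kN,T)\to\Ext^1_{\C}(Q_k,T)$ to vanish, and this need not hold once $n\ge1$.

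\emph{Step 1's well-definedness argument is also broken as written.} You describe the kernel as ``the image of $\hom_{\C}(Q_{k-1},T)$'', but that image lies in $\hom_{\C}(\syz^kN,T)$, not in $\Ext^1_{\C}(\syz^kN,T)$. You also form $\epsilon\ga$, although the ends of $\epsilon$ and $\ga$ do not match.

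\emph{How the paper avoids this.} The failure of lifting is exactly what the paper's machinery addresses.
\begin{itemize}
\item Proposition~\ref{ll} lets the unit extension $\de'_N$ depend on $\al$.
\item $\ga$ is moved to $\ga'=b^{-1}a\ga$ along an angled pair $\de_N\to\de''_N\leftarrow\de'_N$ of $n$-$\Ext$-invertible inflations.
\item Independence of the choices comes from \cite[Proposition 5.8]{bfss} (Proposition~\ref{3p1}).
\item Surjectivity uses an LUF $\al=f\de'_N$ and $\Phi_T(\al)=f^*{b^*}^{-1}a^*\Phi_{\syz^{n+1}N}(\de_N)$ (Proposition~\ref{epi}).
\end{itemize}

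\textbf{Vanishing on $\p$ is not established for $k=n$.} Your route $\alpha_1\ga=(\alpha_1f)\ga'$ works only when $k>n$: there $\ga'\in\Ext^k_{\C}(M,P)=0$ because $P$ is $n$-injective, so in fact $\p=0$ in that range. In the essential case $k=n$, neither $\alpha_1 f\in\Ext^1_{\C}(P,T)$ nor $\ga'\in\Ext^n_{\C}(M,P)$ need vanish. The missing input is \cite[Proposition 4.3]{bfss} (Remark~\ref{lr}). It lets you rewrite a $\p$-extension as $\ga=\et g$ with $g\colon M\to Q$ and $Q$ $n$-projective. Then $\alpha_1\ga=(\alpha_1\et)g=0$, since $\Ext^{n+1}_{\C}(Q,T)=0$ (Lemma~\ref{pp}).

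\textbf{Step 4 is not yet an argument.} The paper identifies $\Ext^n_{\C}(\syz^{-n}M,N)/\p$ with $\Ext^n_{\C}(M,\syz^nN)/\p$ by elementary dimension shifting. Since $\Ext^{n+1}_{\C}(-,P_0)=0$, one gets $\Ext^n_{\C}(X,N)/\p\cong\Ext^{n+1}_{\C}(X,\syz N)$. Together with the mirror statement on the other side, iterating gives the identification (Corollary~\ref{cc}).

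\textbf{What is fine.} Step 3, computing the kernel from the long exact sequence of $\syz^{k+1}N\to Q_k\to\syz^kN$, is correct and matches the paper.
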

We mention that  the above theorem also provides a vast generalization of  the Hilton-Rees theorem for $n$-Frobenius categories, which need not have projective objects, see Corollary \ref{c44}.  We  illustrate some explicit examples of $n$-Frobenius categories for which the above isomorphisms are satisfied. These results will appear in Section 4.


Throughout the paper, $\C$ is an extension-closed full additive subcategory of an abelian category $\A$.  So, it will be an exact category. We always assume that $n$ is a non-negative integer.
 

\section{Preliminaries}

In this section, we recall some notions that will be used in the paper. Particularly, we will remember some facts on extensions briefly and also recall the notion of $n$-Frobenius categories. Moreover, several examples of such categories are provided.

\begin{s}\label{gener}Since $\C$ is an extension-closed subcategory of the abelian category $\A$, the exact structure of $\A$ is inherited by $\C$. Namely, conflations are those short exact sequences in $\A$ with terms in $\C$,  see \cite[Lemma 10.20]{buh}.  We will make use  the following considerations.
\begin{itemize}
\item[(a)] Throughout the paper, instead of conflations, we use the term `extensions'. Also, for simplicity, each extension of the form $0\rt M\rt X\rt N\rt 0$ in $\C$ will be denoted by $M\rt X\rt N$.
\item[(b)] Assume that $n\geq 1$ is an integer. By an extension of length $n$ in $\C$, we mean an exact sequence of a form $\et: 0\rt A\rt X_{n-1}\rt\cdots\rt X_0\rt B\rt 0$ in $\A$, which is obtained by splicing $n$ composable extensions in $\C$. Also,  $A$ and $B$ are called the left and right ends of $\et$, respectively. Such a sequence $\et$ will be denoted by $ A\rt X_{n-1}\rt\cdots\rt X_0\rt B$. If there is no ambiguity, extensions of length $n$ will just be called extensions.
\item[(c)] The set of all equivalence classes of extensions of length $n$ in $\C$, with  $A$ and $B$ being the left and right ends, will be denoted by $\Ext^n_{\C}(B, A)$. We should remind that the equivalence relation is generated by the relation that $\et\sim\et'$, if there is a morphism of extensions $\et\lrt\et'$, with fixed ends. Namely, there is a commutative diagram in $\C$
{\footnotesize\[\xymatrix{\et: A\ar@{=}[d]\ar[r] & X_{n-1}\ar[d]\ar[r] & \cdots\ar[r] & X_0\ar[r]\ar[d] & B\ar@{=}[d]\\ \et': A\ar[r] & X'_{n-1}\ar[r] & \cdots\ar[r] & X'_0\ar[r] & B.}\]}It should be noted that, by the convention, $\Ext^0_{\C}(B, A):=\hom_{\C}(B, A)$. It is known that for any integer $n\geq 1$, $\Ext^n_{\C}(B, A)$, with respect to the Baer sum operation, admits a structure of an abelian group. Indeed, for any pair of elements $\al, \be\in\Ext^n_{\C}(B, A)$, we have $\al+\be=\nabla_A(\al\oplus\be)\Delta_B$. For more details, see \cite[Chapter VII]{mit} and also \cite[Section 9]{hs}. \item[(d)] For given  elements $\et\in\Ext^t_{\C}(B, A)$ and $\et'\in\Ext^{t'}_{\C}(C, B)$, one may splice these together and get  the composition $\et\et'$, which is  an element of $\Ext^{t+t'}_{\C}(C, A)$.
\end{itemize}
\end{s}

\begin{rem}Assume that $f:A\rt A'$ and $g:B\rt B'$ are morphisms in $\C$. Then for any integer $n\geq 0$, the correspondences  $$\Ext^n(B, f):\Ext^n_C(B, A)\lrt\Ext^n_{\C}(B, A') :~~\al\mapsto f\al ~~~and$$
$$\Ext^n(g, A):\Ext^n_{\C}(B', A)\lrt\Ext^n_{\C}(B, A) :~~\be\mapsto \be g$$
are morphisms of abelian groups. Here $f\al$ is the push-out of $\al$ along $f$, and $\be g$ stands for the pull-back of $\be$ along the morphism $g$. One may note that, as $\Ext^0_{\C}(-, -)=\hom_{\C}(-, -)$, these operations can be interpreted as the composition of morphisms in $\C$.
\end{rem}

\begin{theorem}\label{long}Let $\ep: A\st{f}\rt B\st{g}\rt C$ be an extension in $\C$. Then for any object $X\in\C$ and any integer $n>0$, there exists an exact sequence of groups
$$\Ext^{n-1}_{\C}(X, C)\st{\Delta^{n-1}_X}\lrt\Ext^n_{\C}(X, A)\st{\Ext^n(X, f)}\lrt\Ext^n_{\C}(X, B)\st{\Ext^n(X, g)}\lrt\Ext^n_{\C}(X, C)\st{\Delta^n_X}\lrt\Ext^{n+1}_{\C}(X, A),$$where $\Delta^n_{X}(\ga)=\ep\ga$, for any $\ga\in\Ext^n_{\C}(X, C)$.
\end{theorem}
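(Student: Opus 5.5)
Since the statement concerns Yoneda extensions in an exact category that need not have enough projectives or injectives, I would prove the long exact sequence directly from the Yoneda description of $\Ext^n_{\C}$ in \ref{gener}, in the spirit of \cite[Chapter VII]{mit}. First I check that all maps are well-defined homomorphisms. $\Ext^n(X,f)$ and $\Ext^n(X,g)$ are push-outs, hence homomorphisms by the preceding Remark. $\Delta^n_X(\ga)=\ep\ga$ is splicing on the left; this respects the equivalence relation and is additive, because splicing is bilinear with respect to Baer sum, using $\ep\nabla_C=\nabla_A(\ep\oplus\ep)$ as in \cite{mit}. For $n=1$ the first term is $\hom_{\C}(X,C)$, and $\Delta^0_X(h)=\ep h$ is the pull-back. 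Push-outs and pull-backs of extensions stay in $\C$ because $\C$ is extension-closed.

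Next I show the composites vanish. $g\circ f=0$ gives $\Ext^n(X,g)\Ext^n(X,f)=0$. The composite $f\ep$ is the push-out of $\ep$ along its own inflation, which splits; hence $f\ep\ga=(f\ep)\ga=0$. Similarly $\ep g$ is the pull-back of $\ep$ along its deflation, which splits; since pushing out along $g$ commutes with splicing, $\ep(g\be)=(\ep g)\be=0$.

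Then exactness at each spot.

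\textbf{At $\Ext^n_{\C}(X,B)$.} Let $\be\colon B\rt Y_{n-1}\rt\cdots\rt X$ with $g\be\sim 0$. Push out the first extension $B\rt Y_{n-1}\rt K$ along $g$. Factoring the first term, vanishing of $g\be$ means the resulting class in $\Ext^n(X,C)$ is trivial. By the standard description of trivial Yoneda classes, after replacing $\be$ by an equivalent representative one may take the first piece $C\rt Y'\rt K$ to be split. Then $B\rt Y_{n-1}$ factors so that $Y_{n-1}$ sits in an extension $A\rt Y_{n-1}\rt Y'$, and with this one writes $\be=f\al$ for $\al$ built from $A\rt Y_{n-1}$. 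This is the step I expect to be the main obstacle: trivial classes are only zero up to a zigzag of equivalences, so the equivalence must be tracked carefully. For $n=1$ it is the classical $3\times3$ argument. For larger $n$ I would reduce to $n=1$ using the factorization $\be=\be_1\be'$ with $\be_1\in\Ext^1(K,B)$, together with the dimension-shifting fact that $\be\sim0$ iff $\be_1$ lies in the image of the connecting map for $\be'$.

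\textbf{At $\Ext^n_{\C}(X,A)$.} If $f\al\sim 0$, write $\al=\al_1\al'$ with $\al_1\colon A\rt Z\rt K$. Then $f\al_1$ is the pull-back of some class along the connecting data. Concretely, comparing with the extension $B\rt B\oplus_A Z\rt K$ and the $3\times3$ lemma produces $\ga\in\Ext^{n-1}(X,C)$ with $\ep\ga=\al$; the $n=1$ case is the classical $\hom(X,C)\rt\Ext^1(X,A)\rt\Ext^1(X,B)$.

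\textbf{At $\Ext^n_{\C}(X,C)$.} If $\ep\ga\sim0$, the same trivial-class description yields an extension of $X$ by $B$ whose push-out along $g$ is $\ga$.

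All diagram chases take place in the abelian category $\A$, with closure of $\C$ under extensions guaranteeing the intermediate objects lie in $\C$.
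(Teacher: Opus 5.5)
Your preliminary steps are fine: the maps are well defined and additive, and the composites vanish because $f\ep$ and $\ep g$ split. The exactness part, however, has a genuine gap, and it sits exactly where the content of the result lies. (The paper gives no argument of its own here; it cites Mitchell, Theorem VII.5.1.) Since $\sim$ is only the equivalence relation \emph{generated} by morphisms with fixed ends, $g\be\sim 0$ (or $f\al\sim 0$, or $\ep\ga\sim 0$) says nothing about a given representative. It does not let you assume that the first piece $C\rt Y'\rt K$ of $g\be$ splits, nor that $f\al_1$ splits. The ``standard description of trivial Yoneda classes'' you invoke is never stated or proved. What is missing is the lemma that two $m$-extensions are equivalent iff they are joined by a \emph{single} span $\theta\leftarrow\theta''\rt\theta'$ of morphisms with fixed ends, with $\theta''$ again an extension in $\C$. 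For $m\geq 2$, a morphism from $\theta''$ to the trivial extension is the same as a retraction of the first inflation of $\theta''$. Hence the lemma gives: $\theta\sim 0$ iff some $\theta''$ with split left-hand piece maps to $\theta$. With this, exactness at $\Ext^n_{\C}(X,C)$ follows directly: lift the induced map $K''\rt C$ to $B$ through the splitting. At $\Ext^n_{\C}(X,B)$ and $\Ext^n_{\C}(X,A)$ you still need an extra construction that produces an \emph{equivalent representative} of $\be$ (resp.\ $\al$) whose push-out has split first piece. For example, replace $Y_{n-1}$ by $Y_{n-1}\times_{Y'}Z_{n-1}$, an extension of $A$ by $Z_{n-1}$ and hence in $\C$. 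You must then check exactness and the comparison morphism to $\be$.

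The fallback you propose does not close this gap. As written, the ``dimension-shifting fact'' does not typecheck: no connecting map attached to $\be'$ takes values in $\Ext^1_{\C}(K,B)$. Its only unconditional reading is the tautology that $\be_1\be'\sim 0$ iff $\be_1$ is killed by $\theta\mapsto\theta\be'$. Its substantive reading is ``$\be_1$ extends along $K\rt Y_{n-2}$''. For $n=2$ this is exactness of $\Ext^1_{\C}(Y_0,B)\rt\Ext^1_{\C}(K,B)\rt\Ext^2_{\C}(X,B)$, the first-variable analogue of the very statement you are proving, so nothing is gained. For $n\geq 3$ and a fixed representative of $\be'$ it is false. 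Over the ring $\mathbb{Z}/4$, let $\eta\colon \mathbb{Z}/2\rt\mathbb{Z}/4\rt\mathbb{Z}/2$ and let $\sigma$ be the split extension of $\mathbb{Z}/2$ by $\mathbb{Z}/2$. Then $\eta(\eta\sigma)=0$, yet $\eta\neq 0$ does not extend along $\mathbb{Z}/2\rt\mathbb{Z}/4$, since $\Ext^1_{\mathbb{Z}/4}(\mathbb{Z}/4,\mathbb{Z}/2)=0$. Your sketch at $\Ext^n_{\C}(X,A)$ (``$f\al_1$ is the pull-back of some class along the connecting data'') presupposes the same representative-level splitting. So, as it stands, the proposal reduces the theorem to an unproved lemma that carries essentially all of its content.
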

\begin{proof}See \cite[Chapter VII, Theorem 5.1]{mit}.
\end{proof}

\begin{dfn}\label{x}Assume that $\X$ is a class of objects in $\C$ which is closed under finite direct sums. Then for any two objects $A, B\in\C$ and any integer $t\geq 0$, we say that an element $\al\in\Ext^t_{\C}(B, A)$, lies in $\X(B, A)$, if there exists a morphism $f:X\rt A$ in $\C$, with $X\in\X$, and an element $\be\in\Ext^t_{\C}(B, X)$ such that $\al=f\be$. \\
It should be noted that, since $\X$ is closed under finite direct sums, one may easily observe that  $\X(B, A)$ is an additive subgroup of $\Ext^t_{\C}(B, A)$. Moreover, if $t=0$, then $\X(B, A)$ will be the subgroup of $\hom_{\C}(B, A)$ consisting of those morphisms that factor through some objects in $\X$.
\end{dfn}

\begin{dfn}(\cite[Definition 2.3]{bfss}) The exact category $\C$ is called $n$-Frobenius, if
it has enough $n$-projectives and $n$-injectives and these two classes of objects coincide. Recall that
a given object $P\in\C$ (resp. $I\in\C$) is said to be {\it $n$-projective} (resp. {\it $n$-injective}), if $\Ext^i_{\C}(P, X)=0$ (resp. $\Ext^i_{\C}(X, I)=0$) for all integers $i>n$ and all objects $X\in\C$.
\end{dfn}
It follows from the definition that 0-Frobenius categories are exactly Frobenius categories in the usual sense. Also, an $n$-Frobenius category is a $k$-Frobenius cayegory, for any integer $k>n$.  It is known that any abelian category with non-zero $n$-projective objects admits a non-trivial $n$-Frobenius subcategory, see \cite[Theorem 2.15]{bfss}.

\begin{rem}\label{ppp} Keeping the notation of Definition \ref{x}, in this paper, we consider the following two special cases:
\begin{itemize}
\item If $\C$ has enough projective objects, then we take $\X:=\proj\C$, the class of all projective objects of $\C$.
\item If $\C$ is an $n$-Frobenius category, then we set $\X:=n$-$\proj\C$, the class of all $n$-projective objects of $\C$.
\end{itemize}
In both cases, for any pair of objects $ A, B\in\C$, $\X(B, A)$, which will be denoted by $\p(B, A)$, is a subgroup of $\Ext^t_{\C}(B, A)$. If there is no confusion, we denote $\p(-, -)$  by $\p$, and extensions which lie in $\p$ will be called $\p$-extensions.
\end{rem}

Below we give several examples of $n$-Frobenius categories.
\begin{example}\label{ex}
Let $(R, \m)$ be a $d$-dimensional commutative noetherian local ring.\\ (1) The category $\G^{<\infty}$ consisting of all $R$-modules of finite Gorenstein projective dimension is a $d$-Frobenius category, and its $d$-projective objects are indeed the modules of finite projective dimension, see \cite[Example 2.16]{bfss}.\\
{(2) If $R$ is Gorenstein, then the category $\md R$ of all finitely generated $R$-modules, is a $d$-Frobenius category. Particularly, if $d>0$, then injective modules can not be finitely generated, and so, $\md R$ does not have injective objects.\\ (3) Assume that $R$ is  complete  and $\C$ is the class of all artinian $R$-modules. It is known that $D(-):=\hom_{R}(-, E(R/{\m})):\md R\lrt\C$ is a duality. So, according to the previous statement, if $R$ is Gorenstein, then $\C$ will be a $d$-Frobenius category, as well. In particular, if $d>0$, then $\C$ does not have projective objects, because $\md R$ does not have injective objects.}\\ (4) The class $\mathsf{GF}(R)$ of all Gorenstein flat $R$-modules, is a $d$-Frobenius category, and its $d$-projective objects are flat modules, see \cite[Example 2.3(2)]{bfss1}.\\ (5) Assume that $R$ is Gorenstein and $\Q$ a finite acyclic quiver. Then the category  $\G$ of all lattices over $R\Q$ is a 1-Frobenius subcategory of $R\Q$-$\Md$, in which its 1-projective objects are those $R\Q$-modules $M$ which are projective over $R$. We should point out that $\G$ would not be a Frobenius category, if $\Q$ has at least two vertices. Recall that by a lattice over $R\Q$, we mean an $R\Q$-module  which is finitely generated Gorenstein projective over $R$, see \cite[Example 2.6]{bfss1}.\\
(6) Assume that $\coh(\mathbf{P^{1}})$ is the category of  coherent sheaves over the projective line $\mathbf{P^{1}}(R)$, and $\C$
is its full subcategory whose objects are of the form $ \G\equiv G_1 \rt G \lf G_2$, where $G_1$ (resp. $G_2$) is Gorenstein projective over $R[x]$ (resp. $R[x^{-1}]$). Then $\C$ is a 1-Frobenius category which has no projective objects, see \cite[Appendix A]{bfss1} and \cite[Corollary 2.3]{ee}.
\end{example}

\begin{s}\label{ss}Assume that $\C$ is an $n$-Frobenius category.\\
(1) Since $\C$ has enough $n$-projective objects, for any object $M\in\C$ and an integer $t\geq 1$, one may obtain a syzygy sequence  $\syz^tM\rt P_{t-1}\rt\cdots\rt P_0\rt M$, where each $P_i$ is  $n$-projective.  Also, $\syz^tM$ is called a $t$-th syzygy of $M$. Syzygy sequences are also called unit extensions.\\ (2)  For a given object $X\in\C$, the class of all  unit  extensions (of length $n$)  ending at  $X$ (resp.  beginning with  $X$) is denoted by $\U_n(X)$ (resp.  {$\U^n(X)$}).\\ (2) Assume that $t\geq 1$ is an integer and $\delta, \delta' \in \U^t(N)$ (resp. $\delta_1, \delta_2 \in\U_t(N)$). Then there exist deflations (resp. inflations) $a, a'$ with $n$-projective kernels (resp. cokernels) such that $\delta a=\delta'a'$ (resp. $a\delta_1=a'\delta_2$), see \cite[Proposition 5.5]{bfss}. In this situation, $[\delta a, \delta'a']$  (resp. $[a\delta_1, a'\delta_2]$) is called a co-angled pair (resp. an angled pair). In some cases, we denote it by $\delta\st{a}\llf\delta''\st{a'}\lrt\delta'$ (resp. $\delta_1\st{a}\lrt\delta''\st{a'}\llf\delta_2$), where $\delta''=\delta a$  (resp. $\delta''=a\delta_1$).\\ (3) Suppose that an element $\ga\in\Ext^t_{\C}(M, \syz^tN)$ is given, where $t\geq 1$ is an integer. So there exist unit extensions $\delta\in\U^t(\syz^t N)$, $\delta'\in\U_t(M)$ and morphisms $f,g$ in $\C$ such that $\ga=\delta f$ and $\ga=g\delta'$, see \cite[Proposition 5.2]{bfss}. These are called a right and a left unit factorization of $\ga$, respectively, and abbreviated by $\ruf$ and $\luf$.
\end{s}

\begin{s}{\sc $n$-$\Ext$-phantom (invertible) morphisms.} Assume that $\C$ is an $n$-Frobenius category and $f:M\rt N$ is a morphism in $\C$.\\ (1) For a given object $X\in\C$, there exist induced maps $\Ext^n_{\C}(X, M)/{\p}\rt\Ext^n_{\C}(X, N)/{\p}$ and $\Ext^n_{\C}(N, X)/{\p}\rt\Ext^n_{\C}(M, X)/{\p}$ that assign  each object $\ga+\p$ to ${f\ga}+\p$ and ${\ga f}+\p$, respectively.\\ (2)
$f$ is called an $n$-$\Ext$-invertible morphism, provided that   for any object $X\in\C$, the induced morphism $\Ext^n_{\C}(X, M)/{\p}\rt\Ext^n_{\C}(X, N)/{\p}$ is an isomorphism. It can be seen that,  this is equivalent to saying that $\Ext^n_{\C}(N, X)/{\p}\rt\Ext^n_{\C}(M, X)/{\p}$) is also an isomorphism,  see \cite[Corollaries 3.4 and 4.8]{bfss}. These, in particular, imply that $f$ is an $n$-$\Ext$-invertible morphism if and only if the natural transformation $\Ext^{n+1}(-, f):\Ext^{n+1}_{\C}(-,M)\lrt\Ext^{n+1}_{\C}(-,N)$ (or equivalently, $\Ext^{n+1}(f, -)$) is an equivalence of functors. 
It should be pointed out that $n$-$\Ext$-invertible morphisms are called quasi-invertible morphisms in \cite{bfss}. Such morphisms are also called quasi-isomorphisms in \cite{hr}.\\
 (3) $f$ is said to be an $n$-$\Ext$-phantom morphism, if   for any object $X\in\C$, the induced morphism $\Ext^n_{\C}(X, M)/{\p}\rt\Ext^n_{\C}(X, N)/{\p}$  (or equivalently, $\Ext^n_{\C}(N, X)/{\p}\lrt\Ext^n_{\C}(M, X)/{\p}$) is zero, see \cite[Theorem 6.7 and Definition 6.8]{bfss}. By \cite[Corollary 6.9]{bfss},  $f$ is an $n$-$\Ext$-phantom morphism, if and only if  the natural transformation $\Ext^{n+1}(-, f):\Ext^{n+1}_{\C}(-,M)\lrt\Ext^{n+1}_{\C}(-,N)$ (or equivalently, $\Ext^{n+1}(f, -)$) vanishes.  
\end{s}

\begin{example}(1) Over an $n$-Frobenius category $\C$, any morphism
with $n$-projective kernel and cokernel is an  $n$-$\Ext$-invertible morphism, see \cite[Corollary 3.6]{bfss}. Also, a morphism that factors through an $n$-projective object is also an $n$-$\Ext$-phantom morphism.\\ (2)  Over Frobenius categories (or equivalently, 0-Frobenius categories), 0-$\Ext$-phantom morphisms are those morphisms that factor through projective objects, see \cite[Proposition 3.8(1)]{bfss1}.\\ (3) If $\C$ is a Frobenius category, then a given morphism $f: M\rt N$ in $\C$ is a 0-$\Ext$-invertible if  and only if there exist $P,Q\in\proj\C$ such that $M\oplus Q\st{{{\tiny {\left[\begin{array}{ll} f & g_1 \\ l & g_2 \end{array} \right]}}}}\lrt N\oplus P$ is an isomorphism, see \cite[Proposition 3.8(2)]{bfss1}.
\end{example}

\begin{rem}\label{invert}Assume that $\C$ is an $n$-Frobenius category and  an element $\ga\in\Ext^n_{\C}(M, N)$ is given. It is known that for any $n$-$\Ext$-invertible morphism $a:M'\rt M$, $\ga$ is a $\p$-extension if and only if $\ga a$ is a $\p$-extension. Similarly, $b\ga$ is a $\p$-extension if and only if $\ga$ is so, whenever $b:N\rt N'$ is an $n$-$\Ext$-invertible morphism, see \cite[Proposition 4.9]{bfss}.
\end{rem}

\section{A generalization of the Hilton-Rees theorem}
The purpose of this section is to provide a far-reaching generalization of the Hilton-Rees theorem. Namely, we give the proof of  Theorem \ref{main1}, stated in the introduction.

Throughout this section, we always assume that $\C$ has enough projective objects and $n$ is a non-negative integer. Let us begin with an auxiliary result.

\begin{prop}\label{1}Let $\al$ be an element of $\Ext^{n+1}_{\C}(N, T)$. Then for a given syzygy sequence $\de_N\in\Ext^n_{\C}(N, \syz^nN)$, there exists an element $\alpha_1\in\Ext^1_{\C}(\syz^nN, T)$ such that there is a morphism $\alpha_1\de_N\lrt\al$ with fixed ends.
\end{prop}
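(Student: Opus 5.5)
The plan is the classical comparison argument: lift the identity of $N$ along a representative of $\al$, using projectivity of the terms of $\de_N$. Fix a representative
$$\al:\ T\rt X_n\rt X_{n-1}\rt\cdots\rt X_0\rt N,$$
obtained by splicing extensions in $\C$. Write $K_i=\ker(X_{i-1}\rt X_{i-2})$, with $K_0=N$, so that each $K_i\rt X_{i-1}\rt K_{i-1}$ is an extension in $\C$ and $K_{n+1}=T$. Also write the syzygy sequence as
$$\de_N:\ \syz^nN\rt P_{n-1}\rt\cdots\rt P_0\rt N,$$
with each $P_i$ projective and syzygies $\syz^iN$, where $\syz^0N=N$. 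If $n=0$ the statement is trivial: take $\alpha_1=\al$, since $\de_N$ is the identity of $N$. So assume $n\geq 1$.

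The first step builds a chain map $P_\bullet\rt X_\bullet$ over $1_N$ by induction. Since $P_0$ is projective in $\C$ and $X_0\rt N$ is a deflation, $P_0\rt N$ lifts to a map $f_0:P_0\rt X_0$. This restricts to a map $\syz^1N\rt K_1$ on kernels. Next, $P_1\rt\syz^1N\rt K_1$ lifts through the deflation $X_1\rt K_1$, giving $f_1:P_1\rt X_1$, and so on. After $n$ steps we have $f_0,\dots,f_{n-1}$ and an induced map $h:\syz^nN\rt K_n$ making all squares commute.

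The second step produces $\alpha_1$. Let $\al'$ be the tail extension $T\rt X_n\rt K_n$, so that $\al=\al'\,\al''$, where $\al''$ is the truncation of $\al$ ending at $K_n$. Set
$$\alpha_1:=\al' h\in\Ext^1_{\C}(\syz^nN,T),$$
the pull-back of $\al'$ along $h$. This is $T\rt E\rt\syz^nN$ with $E=X_n\times_{K_n}\syz^nN$. It is a conflation in $\C$ because pull-backs of conflations exist in an exact category; concretely, $E$ is an extension of $\syz^nN$ by $T$ in $\A$, hence lies in $\C$ by extension-closure. The canonical map $E\rt X_n$ together with $f_{n-1},\dots,f_0$ then gives a commutative diagram from the spliced sequence
$$\alpha_1\de_N:\ T\rt E\rt P_{n-1}\rt\cdots\rt P_0\rt N$$
to $\al$, with identities on the ends $T$ and $N$.

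The remaining work is checking commutativity at the splice points. At the junction, $E\rt P_{n-1}$ is the composite $E\rt\syz^nN\rt P_{n-1}$, and commutativity there follows from $h$ being the map induced on kernels. I expect the only delicate point to be confirming that the projectives of $\C$ lift against deflations of $\C$, which are epimorphisms in $\A$ with kernel in $\C$. This holds by the definition of projective objects in the exact category $\C$, so the whole argument stays inside $\C$.
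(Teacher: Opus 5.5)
Your proposal is correct and follows essentially the same route as the paper. The paper likewise splits $\al$ into a tail $T\rt X_n\rt L$ and a truncation ending at $L$, lifts $\de_N$ over $1_N$ using projectivity of the $P_i$ to get $f:\syz^nN\rt L$, pulls the tail back along $f$, and splices; you additionally spell out the inductive lifting and the commutativity check at the splice point, which the paper leaves implicit.
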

\begin{proof}Decompose $\al: T\rt X_n\rt\cdots\rt X_0\rt N$ into two extensions $\alpha':T\rt X_n\rt L$ and $\alpha'': L\rt X_{n-1}\rt\cdots\rt X_0\rt N$. Now assuming $\de_N:\syz^nN\rt P_{n-1}\rt\cdots\rt P_0\rt N$, where each $P_i$ is projective, one may obtain the following commutative diagram with exact rows: {\footnotesize\[\xymatrix{\de_N:\syz^{n}N\ar[d]_{f}\ar[r] & P_{n-1}\ar[d]\ar[r] & \cdots\ar[r] & P_0\ar[r]\ar[d] & N\ar@{=}[d]\\
\alpha'':L\ar[r] & X_{n-1}\ar[r] & \cdots\ar[r] & X_0\ar[r] & N.}\]}Now take a pull-back diagram
{\footnotesize \[\xymatrix{\alpha_1:T\ar[r]\ar@{=}[d] & G\ar[r]\ar[d]_{f_1} & \syz^{n}N\ar[d]_{f} \\ \alpha':T\ar[r] & X_{n}\ar[r]& L.}\] }
Splicing the above diagrams together gives us the following commutative diagram:
{\footnotesize\[\xymatrix{\alpha_1\de_N:T\ar[r]\ar@{=}[d]& G\ar[d]\ar[r] & P_{n-1}\ar[d]\ar[r] & \cdots\ar[r] & P_0\ar[r]\ar[d] & N\ar@{=}[d]\\ \al: T\ar[r]& X_{n}\ar[r] & X_{n-1}\ar[r] & \cdots\ar[r] & X_0\ar[r] & N,}\]}as desired.
\end{proof}

\begin{rem}\label{r1}Assume that $\al\in\Ext^{n+1}_{\C}(N, T)$ is given. Take a syzygy sequence $\de_N\in\Ext^n_{\C}(N, \syz^nN)$.
According to Proposition \ref{1}, there exists a morphism $\alpha_1\de_N\lrt\al$ with fixed ends, where $\alpha_1\in\Ext^1_{\C}(\syz^nN, T)$. Now for a given element $\ga\in\Ext^n_{\C}(M, \syz^nN)$, $\alpha_1\ga$ will be an element of $\Ext^{n+1}_{\C}(M, T)$.
\end{rem}

\begin{theorem}\label{t1}With the notation of Remark \ref{r1}, the assignment
$$\Psi_{\ga}(T): \Ext^{n+1}_{\C}(N, T)\lrt\Ext^{n+1}_{\C}(M, T):~~\al\mapsto \alpha_1\ga $$
is a morphism of abelian groups.
\end{theorem}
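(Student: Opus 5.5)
The proof has two parts. First I show that $\Psi_{\ga}(T)$ is well defined, meaning the class $\alpha_1\ga$ does not depend on the choices made in Proposition \ref{1}. Then I show that it is additive. The key tool for both is the long exact sequence of Theorem \ref{long} applied to $\de_N$, or more precisely to its last piece.

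\emph{Well-definedness.} Write $\de_N=\ep\,\de'$, where $\ep:\syz^nN\rt P_{n-1}\rt\syz^{n-1}N$ is the first short extension and $\de'\in\Ext^{n-1}_{\C}(N,\syz^{n-1}N)$. I claim that the map $\Ext^1_{\C}(\syz^nN,T)\rt\Ext^{n+1}_{\C}(N,T)$, $\beta\mapsto\beta\de_N$, is surjective with kernel $\p(\syz^nN,T)$, the classes of the form $\beta=\beta' h$ with $h:\syz^nN\rt P_{n-1}$ the inflation in $\ep$. Surjectivity is exactly Proposition \ref{1}. For the kernel:
\begin{itemize}
\item The contravariant long exact sequence for $\ep$ (the dual of Theorem \ref{long}) identifies $\ker(\beta\mapsto\beta\ep)$ on $\Ext^1(\syz^nN,T)$ with the image of restriction along $P_{n-1}\rt\syz^nN$ applied to $\Ext^1(P_{n-1},T)$. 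That image is $\{\beta' h\}$; since $\Ext^1(P_{n-1},T)=0$, it equals the image of $\hom(\syz^nN,T)\rt\Ext^1(\syz^nN,T)$ under $\ep$-connecting maps, i.e. the $\p$-extensions in the sense of Definition \ref{x}.
\item Dimension shifting through the projectives $P_i$ gives that $\Ext^{k}(\syz^{n-k'}N,T)\rt\Ext^{k+1}(\syz^{n-k'-1}N,T)$ is an isomorphism for $k\ge 1$. Hence the kernel of $\beta\mapsto\beta\de_N$ is the same as the kernel of $\beta\mapsto\beta\ep$.
\end{itemize}
(Even without using this explicit description of $\p$, the argument below only needs that the kernel consists of classes of the form $u\,\ep$-connecting images $=\beta''h$.) Now suppose $\alpha_1\de_N=\alpha_1'\de_N=\al$. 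Then $\alpha_1-\alpha_1'=\beta'h$ for some $\beta'\in\Ext^1(P_{n-1},T)=0$. Equivalently, by the exactness
$$\hom(P_{n-1},T)\rt\hom(\syz^nN,T)\rt\Ext^1(\syz^{n-1}N,T)\rt 0,$$
the difference is the connecting image of a morphism $u:\syz^nN\rt T$, i.e. $\alpha_1-\alpha_1'=u\ep$ as classes in $\Ext^1$. I should double-check this degree bookkeeping, but in either formulation the difference is $\ep$ composed with something of degree $0$. Then
$$(\alpha_1-\alpha_1')\ga = u\,\ep\,\ga' ,$$
so I need $\ep$-type terms composed with $\ga$ to vanish. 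The cleanest route is the following.

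The step I expect to be the main obstacle is to show that $\alpha_1\ga$ depends only on $\alpha_1\de_N$. I would prove this by checking directly that $\beta\de_N=0$ implies $\beta\ga=0$. Since $\de_N$ is a projective resolution, $\beta\de_N=0$ means that $\beta$ extends along the inflation $\syz^nN\rt P_{n-1}$: $\beta=\beta' h$ with $\beta'\in\Ext^1(P_{n-1},T)$. For $n\ge1$ this uses the exact sequence $\Ext^1(P_{n-1},T)\rt\Ext^1(\syz^nN,T)\rt\Ext^2(\syz^{n-1}N,T)$ together with the iterated isomorphisms $\Ext^2(\syz^{n-1}N,T)\cong\Ext^{n+1}(N,T)$. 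Since $P_{n-1}$ is projective, $\beta'=0$, hence $\beta=0$. So the map $\beta\mapsto\beta\de_N$ is in fact injective when $n\ge1$, and $\alpha_1$ is unique. For $n=0$ we have $\de_N=1_N$, and there is nothing to prove.

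\emph{Additivity.} Given $\al,\al'\in\Ext^{n+1}(N,T)$, uniqueness of $\alpha_1$ together with the bilinearity of Yoneda composition give $(\alpha_1+\alpha_1')\de_N=\al+\al'$. Therefore $(\al+\al')_1=\alpha_1+\alpha_1'$, and
$$\Psi_{\ga}(T)(\al+\al')=(\alpha_1+\alpha_1')\ga=\alpha_1\ga+\alpha_1'\ga,$$
again by bilinearity of splicing with respect to Baer sum (see \cite[Chapter VII]{mit}).
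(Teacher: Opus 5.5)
Your final argument is correct, but it takes a different route from the paper.

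\textbf{The paper's route.} It fixes the decomposition $\alpha=\alpha'\alpha''$. It observes that any morphism $\alpha_1'\de_N\to\alpha$ with fixed ends gives a second lift $f'\colon\syz^nN\to L$ of $1_N$ with $\alpha_1'=\alpha'f'$. The Comparison Theorem then says $f-f'$ factors through a projective, so $\alpha_1-\alpha_1'=\alpha'(f-f')$ splits. Additivity is handled separately: the paper builds a morphism $\alpha_1\de_N+\beta_1\de_N\to\alpha+\beta$ and quotes Mitchell's Lemma VII.3.2.

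\textbf{Your route.} You use the contravariant long exact sequences for the short pieces of $\de_N$, together with $\Ext^{\ge 1}_{\C}(P_i,T)=0$. This shows that $\beta\mapsto\beta\de_N$, from $\Ext^1_{\C}(\syz^nN,T)$ to $\Ext^{n+1}_{\C}(N,T)$, is injective, and with Proposition \ref{1} it is bijective.

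\textbf{Comparison.} Both arguments prove the same fact, namely that $\alpha_1$ is unique.
\begin{itemize}
\item Your version needs only the Yoneda equality $\alpha_1\de_N=\alpha$. Independence of the representative of $\alpha$ and of the chosen morphism is therefore automatic.
\item Additivity, and even naturality in $T$, become formal: $\Psi_{\ga}(T)$ is the inverse of the dimension-shifting isomorphism followed by $-\circ\ga$.
\item The price is that your argument rests entirely on $\Ext^1_{\C}(P_{n-1},T)=0$. The paper's lift-comparison argument is the one adapted in Section 4. There, for $n$-projectives, this vanishing fails, $\alpha_1$ need not be unique, and one must work modulo $\p$ with angled pairs.
\end{itemize}

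\textbf{Cleanup.} You should delete the first half of your well-definedness paragraph, because it contains false statements.
\begin{itemize}
\item The kernel of $\beta\mapsto\beta\de_N$ is $0$, not $\p(\syz^nN,T)$. The latter is in general nonzero: for $R=k[x,y]$, $N=k$, $n=1$, $T=R$, one has $\p(\m,R)=\Ext^1_R(\m,R)\cong\Ext^2_R(k,R)\neq 0$.
\item The restriction map runs along $h\colon\syz^nN\to P_{n-1}$, not along a map $P_{n-1}\to\syz^nN$.
\item A connecting image $u\ep$ lies in $\Ext^1_{\C}(\syz^{n-1}N,T)$, not in $\Ext^1_{\C}(\syz^nN,T)$, so it cannot express $\alpha_1-\alpha_1'$.
\end{itemize}
None of this is used by the injectivity argument that follows, which stands on its own.
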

\begin{proof}
The first issue to address is well-definedness. In this direction, assume that there exists an element  $\alpha'_1\in\Ext^1_{\C}(\syz^nN, T)$ and a morphism with fixed ends $\alpha'_1\de_N\lrt\al$. We shall prove that $\alpha_1\ga=\alpha'_1\ga$. By our construction, we have the following commutative diagram:
{\footnotesize\[\xymatrix{\de_N:\syz^{n}N\ar[d]_{f'}\ar[r] & P_{n-1}\ar[d]\ar[r] & \cdots\ar[r] & P_0\ar[r]\ar[d] & N\ar@{=}[d]\\ \alpha'':L\ar[r] & X_{n-1}\ar[r] & \cdots\ar[r] & X_0\ar[r] & N.}\]}Now Comparison Theorem (cf.  \cite[Theorem 6.16]{rot}), ensures that $f-f'$ factors through a projective object, and so, taking the following pull-back diagram
{\footnotesize \[\xymatrix{T\ar[r]\ar@{=}[d] & G''\ar[r]\ar[d] & {\syz}^{n}N\ar[d]_{f-f'} \\ \alpha':T\ar[r] & X_n\ar[r]& L,}\]}we infer that the upper row, $\alpha_1-\alpha'_1,$ splits. This, in turn, yields that $\alpha_1\ga=\alpha'_1\ga$, as desired. Next, we show that $\Psi_{\ga}(T)$ is a morphism of abelian groups. So, take elements  $\al, \be\in\Ext^{n+1}_{\C}(N, T)$. It should be proved that $\Psi_{\ga}(T)(\al)+\Psi_{\ga}(T)(\be)=\Psi_{\ga}(T)(\al+\be)$. According to Proposition \ref{1}, there are $\alpha_1, \beta_1\in\Ext^1_{\C}(\syz^nN, T)$ and morphisms of extensions $\alpha_1\de_N\lrt\al$ and $\beta_1\de_N\lrt\be$ with fixed ends. So $\alpha_1\de_N+\beta_1\de_N\lrt\al+\be$ is a morphism of extensions with fixed ends. On the other hand, by \cite[Chapter VII, Lemma 3.2]{mit},  the equality $\alpha_1\de_N+\beta_1\de_N=(\alpha_1+\beta_1)\de_N$ holds true. Thus, by our definition, one obtains the following equalities:
$$\Psi_{\ga}(T)(\al+\be)=(\alpha_1+\beta_1)\ga=\alpha_1\ga+\beta_1\ga=\Psi_{\ga}(T)(\al)+\Psi_{\ga}(T)(\be).$$
So the proof is completed.
\end{proof}

\begin{cor}\label{c2}Let  $\de_N\in\Ext^n_{\C}(N, \syz^nN)$  be a syzygy sequence. Then for any element $\ga\in\Ext^n_{\C}(M, \syz^nN)$, $\Psi_{\ga}(-):\Ext^{n+1}_{\C}(N, -)\lrt\Ext^{n+1}_{\C}(M, -)$ is a natural transformation of extension functors.
\end{cor}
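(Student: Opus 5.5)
By Theorem \ref{t1}, each component $\Psi_{\ga}(T)$ is already a well-defined homomorphism of abelian groups. So the only thing left to check is naturality in $T$. Fix a morphism $h:T\rt T'$ in $\C$. The claim is that
$$\Psi_{\ga}(T')\circ\Ext^{n+1}(N,h)=\Ext^{n+1}(M,h)\circ\Psi_{\ga}(T),$$
that is, for every $\al\in\Ext^{n+1}_{\C}(N,T)$ we need $\Psi_{\ga}(T')(h\al)=h(\alpha_1\ga)$, where $\alpha_1\in\Ext^1_{\C}(\syz^nN,T)$ is chosen as in Proposition \ref{1} with a morphism $\alpha_1\de_N\lrt\al$ with fixed ends.

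The plan is to show that $h\alpha_1$ is an admissible choice of the element of Proposition \ref{1} for $h\al$. Since Theorem \ref{t1} shows the value of $\Psi_{\ga}(T')$ does not depend on which admissible element is used, this will give
$$\Psi_{\ga}(T')(h\al)=(h\alpha_1)\ga.$$
To produce the required morphism, take the push-out of the morphism $\alpha_1\de_N\lrt\al$ along $h$. Concretely, push out the first extension of $\alpha_1\de_N$, namely $T\rt G\rt\syz^nN$, and the first extension of $\al$, namely $T\rt X_n\rt L$, along $h$. The universal property of the push-out gives a map $G'\rt X'_n$ between the push-out objects, compatible with the maps down to $\syz^nN\rt L$. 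Leave the remaining terms $P_{n-1},\dots,P_0$ and $X_{n-1},\dots,X_0$ unchanged. Splicing then gives a morphism with fixed ends
$$(h\alpha_1)\de_N=h(\alpha_1\de_N)\lrt h\al.$$
This is just the standard fact that push-out along a fixed morphism is functorial on morphisms of extensions with fixed ends. It also shows that the push-out respects the equivalence relation of \ref{gener}(c), so $h\al$ is computed by any representative.

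It then remains to check $(h\alpha_1)\ga=h(\alpha_1\ga)$. This is the associativity of the Yoneda composition with push-outs: pushing out the spliced sequence $\alpha_1\ga$ along $h$ only modifies its leftmost short extension $\alpha_1$, and replaces it by $h\alpha_1$ (cf.\ \cite[Chapter VII]{mit}).

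The only step needing care is the push-out construction, specifically ensuring that the induced map $G'\rt X'_n$ makes the square with $\syz^nN\rt L$ commute. This follows directly from the universal property of the push-out, so I do not expect a real obstacle. The proof is then complete.
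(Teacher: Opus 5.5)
Your proposal is correct and follows the same route as the paper: the paper cites Theorem \ref{t1} for the components and then simply asserts that the naturality square for a morphism $g\colon T\rt T'$ ``evidently'' commutes. Your argument supplies the details the paper leaves implicit. You push out the morphism $\alpha_1\de_N\lrt\al$ along $h$ to see that $h\alpha_1$ is an admissible choice for $h\al$, and you then use the well-definedness part of Theorem \ref{t1} together with the identity $(h\alpha_1)\ga=h(\alpha_1\ga)$.
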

\begin{proof}In view of Theorem \ref{t1}, for any object $T\in\C$, $\Psi_{\ga}(T):\Ext^{n+1}_{\C}(N, T)\lrt\Ext^{n+1}_{\C}(M, T)$ is a morphism of abelian groups. Moreover, for any morphism $g:T\rt T'$ in $\C$, one evidently has the following commutative square
\[\xymatrix{\Ext^{n+1}_{\C}(N, T)\ar[r]^{\Psi_{\ga}(T)}\ar[d]_{\Ext^{n+1}(N,g)}&\Ext^{n+1}_{\C}(M, T)\ar[d]_{\Ext^{n+1}(M, g)}\\ \Ext^{n+1}_{\C}(N, T')\ar[r]^{\Psi_{\ga}(T')}&\Ext^{n+1}_{\C}(M, T').}\]Consequently, $\Psi_{\ga}(-)$ is a natural transformation from  $\Ext^{n+1}_{\C}(N, -)$ to $\Ext^{n+1}_{\C}(M, -)$.
It should be noted that, since $\Ext^0_{\C}(M, \syz^0N)=\hom_{\C}(M, N)$, in the case $n=0$, $\ga:M\rt N$ will be a  morphism in $\C$, and also, for a given element  $\al\in\Ext^1_{\C}(N, T)$, $\al=\alpha_1$. In particular, $\Psi_{\ga}=\Ext^1(\ga, -):\Ext^{1}_{\C}(N, -)\lrt\Ext^{1}_{\C}(M, -)$ is a well-known natural transformation.
So the proof is finished.
\end{proof}

\begin{cor}\label{c3}Let $\de_N\in\Ext^n_{\C}(N, \syz^nN)$ be a syzygy sequence. Then for any object $M\in\C$, the corrspondence $$\Psi: \Ext^{n}_{\C}(M, \syz^nN)\lrt \nat(\Ext^{n+1}_{\C}(N, -), \Ext^{n+1}_{\C}(M, -)):~~\ga\mapsto \Psi_{\ga} $$ is a morphism of abelian groups.
\end{cor}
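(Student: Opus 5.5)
Since $\nat(\Ext^{n+1}_{\C}(N, -), \Ext^{n+1}_{\C}(M, -))$ is an abelian group under objectwise addition, it suffices to show $\Psi_{\ga+\ga'}=\Psi_{\ga}+\Psi_{\ga'}$ for all $\ga,\ga'\in\Ext^n_{\C}(M, \syz^nN)$. Equality of natural transformations is checked componentwise, so we fix an object $T\in\C$ and an element $\al\in\Ext^{n+1}_{\C}(N, T)$. By Corollary \ref{c2}, $\Psi_{\ga}$, $\Psi_{\ga'}$ and $\Psi_{\ga+\ga'}$ are all natural transformations, so the target group is the right one.

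Choose, via Proposition \ref{1}, an element $\alpha_1\in\Ext^1_{\C}(\syz^nN, T)$ with a morphism $\alpha_1\de_N\lrt\al$ with fixed ends. Theorem \ref{t1} shows the value of $\Psi$ does not depend on this choice. We may therefore use the same $\alpha_1$ for all three transformations, which gives
$$\Psi_{\ga+\ga'}(T)(\al)=\alpha_1(\ga+\ga'),\qquad \Psi_{\ga}(T)(\al)+\Psi_{\ga'}(T)(\al)=\alpha_1\ga+\alpha_1\ga'.$$
The claim thus reduces to the bilinearity of Yoneda composition with respect to Baer sum. Concretely, we need $\alpha_1(\ga+\ga')=\alpha_1\ga+\alpha_1\ga'$ in $\Ext^{n+1}_{\C}(M, T)$.

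This distributivity is standard and is the companion of the identity $(\alpha_1+\beta_1)\de_N=\alpha_1\de_N+\beta_1\de_N$ from \cite[Chapter VII, Lemma 3.2]{mit} used in Theorem \ref{t1}. Composition $\Ext^1\times\Ext^n\to\Ext^{n+1}$ is biadditive in each variable. The argument writes $\ga+\ga'=\nabla_{\syz^nN}(\ga\oplus\ga')\Delta_M$ and uses associativity of splicing with pushouts and pullbacks: $\alpha_1\nabla=\nabla_T(\alpha_1\oplus\alpha_1)$ as elements of $\Ext^1_{\C}(\syz^nN\oplus\syz^nN, T)$, together with $(\alpha_1\oplus\alpha_1)(\ga\oplus\ga')=\alpha_1\ga\oplus\alpha_1\ga'$. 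These give $\alpha_1(\ga+\ga')=\nabla_T(\alpha_1\ga\oplus\alpha_1\ga')\Delta_M=\alpha_1\ga+\alpha_1\ga'$. The case $n=0$ is immediate, since then $\alpha_1\ga$ is simply a pullback along a morphism and $\Ext^1(-,T)$ is additive.

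The main step requiring care is identifying that the same $\alpha_1$ may be used for all three transformations. This is exactly the well-definedness part of Theorem \ref{t1}, so once it is granted the remaining argument is the cited bilinearity of the Yoneda product.
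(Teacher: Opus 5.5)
Your proposal is correct and follows the paper's proof. You fix $T$ and $\al$, take a single $\alpha_1$ from Proposition \ref{1} (legitimate by the well-definedness in Theorem \ref{t1}), and reduce to the distributivity $\alpha_1(\ga+\ga')=\alpha_1\ga+\alpha_1\ga'$; you justify this via $\nabla$ and $\Delta_M$, whereas the paper simply asserts it. The paper also explicitly checks that $\Psi_{\ga}$ depends only on the class of $\ga$ (a morphism $\ga\rightarrow\ga'$ with fixed ends induces one $\alpha_1\ga\rightarrow\alpha_1\ga'$), which your write-up leaves implicit in the well-definedness of Yoneda composition on equivalence classes.
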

\begin{proof}
Assume that an extension $\ga\in\Ext^n_{\C}(M, \syz^nN)$ is given. By Corollary \ref{c2}, $\Psi_{\ga}$ lies in $\nat(\Ext^{n+1}_{\C}(N, -), \Ext^{n+1}_{\C}(M, -))$. Suppose that $\ga=\ga'$, as elements of $\Ext^n_{\C}(M, \syz^nN)$. We shall prove that $\Psi_{\ga}=\Psi_{\ga'}$. As mentioned in \ref{gener}(c), we may assume that, there exists a morphism of extensions $\ga\lrt\ga'$ with fixed ends. Take an arbitrary elemen $\al\in\Ext^{n+1}_{\C}(N, T)$. In view of Proposiion \ref{1}, there is a morphism $\alpha_1\de_N\lrt\al$. Thus, one may obtain a morphism of extensions $\alpha_1\ga\lrt\alpha_1\ga'$ with fixed ends, and so, $\alpha_1\ga=\alpha_1\ga'$, as objects of $\Ext^{n+1}(M, T)$, see \cite[Chapter VII, Proposition 3.1]{mit}. This means that $\Psi_{\ga}(T)(\al)=\Psi_{\ga'}(T)(\al),$ which  ensures the well-defindness of $\Psi$. Next take two elements $\ga, \ga''\in\Ext^n_{\C}(M, \syz^nN)$. So one has the equalities $$\Psi_{\ga+\ga''}(T)(\al)=\alpha_1(\ga+\ga'')=\alpha_1\ga+\alpha_1\ga''=\Psi_{\ga}(T)(\al)+\Psi_{\ga''}(T)(\al),$$ implying that $\Psi$ is a morphism of abelian groups. Hence the proof is completed.
\end{proof}

Before giving the proof of the main result of this section, we include a couple of axuillary results. 

Recall that for any pair of objects $X, Y\in\C$, an element $\ga\in\Ext^n_{\C}(X, Y)$ lies in $\p$, if there is a morphism $f:P\rt Y$ in $\C$, with $P$ projective, and an element $\et\in\Ext^n_{\C}(X, P)$ such that $\ga=f\et$. As mentioned before, $\p$ is a subgroup of $\Ext^n_{\C}(X, Y)$, see Definition \ref{x} and Remark \ref{ppp}.

 \begin{lem}\label{p} Let $\ep: X\rt Y\st{h}\rt Z$ be an extension in $\C$ and let $\Delta_M^n: \Ext^n_{\C}(M, Z)\rt\Ext^{n+1}_{\C}(M, X)$ be the connecting homomrphism. If $\ga\in\Ext^n_{\C}(M, Z)$ lies in $\p$, then $\Delta^n_M(\ga)=0$. Moreover, the converse is true, whenever $Y$ is a projective object of $\C$.
\end{lem}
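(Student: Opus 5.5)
Both directions will be read off the connecting homomorphism $\Delta^n_M(\ga)=\ep\ga$ and the exact sequence of Theorem \ref{long}.

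\emph{Forward direction.} Suppose $\ga\in\p$. Then $\ga=f\et$ for some projective $P$, some morphism $f:P\rt Z$, and some $\et\in\Ext^n_{\C}(M,P)$. Since $P$ is projective and $h$ is a deflation, $f$ lifts: there is $g:P\rt Y$ with $hg=f$. We then have
$$\Delta^n_M(\ga)=\ep(f\et)=(\ep f)\et=(\ep hg)\et .$$
The pull-back $\ep h$ of $\ep$ along its own deflation $h$ splits, because the diagonal $Y\rt Y\times_Z Y$ gives a section. Hence $\ep f=(\ep h)g=0$, and therefore $\Delta^n_M(\ga)=0$. Associativity of splicing with push-outs and pull-backs (\cite[Chapter VII]{mit}) justifies regrouping $\ep(f\et)=(\ep f)\et$. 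Alternatively, one can argue by exactness: $\ga=h(g\et)$ lies in the image of $\Ext^n(M,h)$, which is the kernel of $\Delta^n_M$ by Theorem \ref{long}. I will use this exactness version, since it avoids the splitting argument.

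\emph{Converse.} Assume $Y$ is projective and $\Delta^n_M(\ga)=0$. By exactness of
$$\Ext^n_{\C}(M,Y)\st{\Ext^n(M,h)}\lrt\Ext^n_{\C}(M,Z)\st{\Delta^n_M}\lrt\Ext^{n+1}_{\C}(M,X)$$
from Theorem \ref{long} (valid for $n\geq1$), there is $\et\in\Ext^n_{\C}(M,Y)$ with $\ga=h\et$. Since $Y$ is projective and $h:Y\rt Z$ is a morphism, this says exactly that $\ga$ lies in $\p$ by Definition \ref{x}.

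\emph{The case $n=0$.} Here $\Ext^0=\hom$ and the connecting map is $\ga\mapsto\ep\ga$, the pull-back of $\ep$ along $\ga:M\rt Z$. The forward direction is the lifting argument above: if $\ga=fe$ with $f$ factoring through a projective, then $f=hg$, so $\ga=h(ge)$ and $\ep\ga=\ep h(ge)=0$. For the converse, $\ep\ga=0$ means the pull-back splits, so $\ga$ lifts through $h$, giving $\ga=hu$ for some $u:M\rt Y$. As $Y$ is projective, $\ga$ factors through a projective, i.e.\ $\ga\in\p$. This is the standard exactness of $\hom(M,Y)\rt\hom(M,Z)\rt\Ext^1(M,X)$.

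I expect no serious obstacle. The only point needing care is making sure Theorem \ref{long} covers the index used, that is, the exactness at $\Ext^n_{\C}(M,Z)$ including the case $n=0$ with $\Ext^0=\hom$. Where the stated range ($n>0$ there refers to the sequence starting from $\Ext^{n-1}$) does not literally cover it, I will invoke the classical exact sequence from \cite[Chapter VII, Theorem 5.1]{mit}, which does.
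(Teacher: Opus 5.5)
Your proof is correct and essentially the same as the paper's. The converse is identical: exactness from Theorem \ref{long} gives $\ga=h\et$ with $Y$ projective. Your forward direction is a minor variant: the paper notes directly that $\ep f$ splits because $P$ is projective and then uses $(\ep f)\be=\ep\ga$, whereas you lift $f$ through $h$ and use exactness. Your separate handling of $n=0$ is a small extra bit of care, since Theorem \ref{long} as stated does not literally give exactness at $\hom_{\C}(M,Z)$.
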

\begin{proof}
Since $\ga\in\p$, by the definition, there exists an extension $\be\in\Ext^n_{\C}(M, P)$ and a morphism $f:P\rt Z$  in $\C$ with $P\in\proj\C$, such that $\ga=f\be$. As $P$ is projective, $\ep f$ is split, and so, $(\ep f)\be$ is a zero element of $\Ext^{n+1}_{\C}(M, X).$  By \cite[Chapter VII, Proposition 3.1]{mit},  $(\ep f)\be=\ep\ga$. Now since $\Delta^n_M(\ga)=\ep\ga$, the claim follows. For the converse, assume that $\Delta^n_M(\ga)=0$. We would like to show that $\ga\in\p$.  By Theorem \ref{long}, there exists an exact sequence of abelian groups: $$\Ext^n_{\C}(M, Y)\st{\Ext^n(M, h)}\lrt\Ext^n_{\C}(M, Z)\st{\Delta^n_M}\lrt\Ext^{n+1}_{\C}(M, X).$$Since $\Delta_M^n(\ga)=0$, one finds an element  $\al\in\Ext^n_{\C}(M, Y)$ such that $h\al=\ga$, gives the desired result, because $Y\in\proj\C$. So the proof is finished.
\end{proof}

The result below reveals that any natural transformation $\Psi$ between extension functors, is determined by its action  on syzygy sequences.

\begin{prop}\label{epi1}Let  $\Phi:\Ext^{n+1}_{\C}(N, -)\rt\Ext^{n+1}_{\C}(M, -)$ be a natural transformation, and  $\de_N\in\Ext^{n+1}_{\C}(N, \syz^{n+1}N)$  an arbitrary syzygy sequence. Then for any $T\in\C$ and  $\al\in\Ext^{n+1}_{\C}(N, T)$, there exists a morphism $f:\syz^{n+1}N\rt T$ in $\C$ such that $\Phi_T(\al)=\Ext^{n+1}(M, f)\Phi_{\syz^{n+1}N}(\de_N)$.
\end{prop}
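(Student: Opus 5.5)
The idea is that $\de_N$ is a "universal" element: every $\al\in\Ext^{n+1}_{\C}(N, T)$ is a push-out of $\de_N$. Once $\al=f\de_N$ for some morphism $f$, naturality of $\Phi$ finishes the argument.

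\emph{Step 1 (comparison).} Write $\de_N:\syz^{n+1}N\rt P_n\rt\cdots\rt P_0\rt N$ with each $P_i$ projective, and represent $\al$ by an extension $T\rt X_n\rt\cdots\rt X_0\rt N$. Since the $P_i$ are projective and the lower row is exact, we lift step by step, as in the proof of Proposition \ref{1}. First lift $P_0\rt N$ through the deflation $X_0\rt N$. Then, at each stage, the composite from $P_{i}$ lands in the kernel of the next map of the lower row, which is in $\C$ and is the image of $X_{i}\rt X_{i-1}$ under a deflation; projectivity of $P_i$ gives the lift. After the last stage we obtain an induced morphism $f:\syz^{n+1}N\rt T$ on the left ends.

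The result is a morphism of extensions $\de_N\lrt\al$ that is the identity on the right end $N$ and is $f$ on the left end. The only care needed is that each lifting takes place along a deflation of $\C$, since the kernels are objects of $\C$ and the spliced pieces are conflations.

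\emph{Step 2 (push-out identification).} A morphism of extensions of length $n+1$ which is the identity on the right end and $f$ on the left end yields $\al=f\de_N$ in $\Ext^{n+1}_{\C}(N,T)$. This is the standard fact \cite[Chapter VII, Proposition 3.1]{mit}, already used in Lemma \ref{p}. Concretely, push out the first conflation $\syz^{n+1}N\rt P_n\rt\syz^nN$ along $f$. We get a morphism with fixed ends from $f\de_N$ to $\al$, so the two elements coincide. Hence $\al=\Ext^{n+1}(N,f)(\de_N)$.

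\emph{Step 3 (naturality).} Naturality of $\Phi$ with respect to $f:\syz^{n+1}N\rt T$ gives
\[
\Phi_T\bigl(\Ext^{n+1}(N,f)(\de_N)\bigr)=\Ext^{n+1}(M,f)\bigl(\Phi_{\syz^{n+1}N}(\de_N)\bigr),
\]
which is the claimed identity $\Phi_T(\al)=\Ext^{n+1}(M, f)\Phi_{\syz^{n+1}N}(\de_N)$.

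The only step with real content is Step 1, where we have to check that the iterated lifts exist inside the exact category $\C$. This follows from projectivity of each $P_i$ relative to deflations of $\C$, exactly as in the comparison diagram of Proposition \ref{1}. Steps 2 and 3 are formal.
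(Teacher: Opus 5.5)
Your proposal is correct and follows the paper's proof: it also builds the comparison morphism $\de_N\lrt\al$ that is $f$ on the left end, replaces $\al$ by $f\de_N$, and finishes with the same naturality square. The only difference is that the paper justifies $\al=f\de_N$ by citing \cite[Remark 2.7(1)]{bfss}, whereas you give the push-out argument directly via \cite[Chapter VII, Proposition 3.1]{mit}.
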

\begin{proof}
Consider a syzygy sequence $\de_N:\syz^{n+1}N\rt P_n\rt\cdots\rt P_0\rt N$.
Assume that an element $\al\in\Ext^{n+1}_{\C}(N, T)$ is given. Since each $P_i$ is projective, one may obtain the
following commutative diagram
{\footnotesize\[\xymatrix{\de_N:\syz^{n+1}N\ar[d]_{f}\ar[r] & P_{n}\ar[d]\ar[r] & \cdots\ar[r] & P_0\ar[r]\ar[d] & N\ar@{=}[d]\\ \al:T\ar[r] & X_n\ar[r]  & \cdots\ar[r] & X_0\ar[r] & N.}\]}As explained in \cite[Remark 2.7(1)]{bfss}, we may assume that $\al=f\de_N$. Consider the following commutative square:
\[\xymatrix{\Ext^{n+1}_{\C}(N, \syz^{n+1}N)\ar[r]^{\Phi_{\syz^{n+1}N}}\ar[d]_{\Ext^{n+1}(N, f)} & \Ext^{n+1}_{\C}(M, \syz^{n+1}N)\ar[d]_{\Ext^{n+1}(M, f)} \\ \Ext^{n+1}_{\C}(N, T)\ar[r]^{\Phi_T} & \Ext^{n+1}_{\C}(M, T).}\]So, we have the following equalities:  $$\Phi_T(\al)=\Phi_T\Ext^{n+1}(N, f)(\de_N)=\Ext^{n+1}(M, f)\Phi_{\syz^{n+1}N}(\de_N),$$as desired. So the proof is completed.
\end{proof}

Now we are ready to give the proof of the main result of this section.
\begin{theorem}\label{t2} Keeping the notation of Corollary \ref{c3}, the correspondence
$$\bar{\Psi}: \Ext^n_{\C}(M, \syz^nN)/{\p}\lrt\nat(\Ext^{n+1}_{\C}(N, -), \Ext^{n+1}_{\C}(M, -)):~~\bar{\ga}\mapsto \Psi_{\ga}$$is an isomorphism of abelian groups.
\end{theorem}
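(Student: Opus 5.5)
The plan has three parts: show that $\Psi$ kills $\p$, show that $\Psi$ is onto, and show that its kernel is exactly $\p$. All three rest on one test element. Fix the syzygy sequence $\de_N:\syz^nN\rt P_{n-1}\rt\cdots\rt P_0\rt N$ and extend it by one further step $\ep:\syz^{n+1}N\rt P_n\rt\syz^nN$ with $P_n$ projective. Then $\ep\de_N$ is a syzygy sequence in $\Ext^{n+1}_{\C}(N,\syz^{n+1}N)$, and Proposition \ref{1} can be applied to it with $\alpha_1=\ep$. Hence, for every $\ga\in\Ext^n_{\C}(M,\syz^nN)$,
$$\Psi_{\ga}(\syz^{n+1}N)(\ep\de_N)=\ep\ga=\Delta^n_M(\ga),$$
where $\Delta^n_M$ is the connecting map of Theorem \ref{long} for the extension $\ep$. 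I will use this identity throughout.

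\emph{$\p$ lies in the kernel of $\Psi$.} Let $\ga=f\be$ with $f:P\rt\syz^nN$ and $P$ projective. For any $\al\in\Ext^{n+1}_{\C}(N,T)$ with associated $\alpha_1$, the pull-back $\alpha_1 f\in\Ext^1_{\C}(P,T)$ splits because $P$ is projective. Therefore $\alpha_1\ga=(\alpha_1f)\be=0$, using associativity of splicing with push-outs and pull-backs as in \cite[VII, Prop.~3.1]{mit}. So $\Psi_\ga=0$, and $\Psi$ factors through a homomorphism $\bar\Psi$ on the quotient.

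\emph{Injectivity.} Suppose $\Psi_\ga=0$. By the displayed identity, $\Delta^n_M(\ga)=\ep\ga=0$. Since the middle term $P_n$ of $\ep$ is projective, the converse part of Lemma \ref{p} gives $\ga\in\p$.

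\emph{Surjectivity.} Let $\Phi$ be a natural transformation. By Proposition \ref{epi1}, $\Phi$ is determined by the single element $\theta:=\Phi_{\syz^{n+1}N}(\ep\de_N)\in\Ext^{n+1}_{\C}(M,\syz^{n+1}N)$. Precisely, for $\al=f\,\ep\de_N$ we have $\Phi_T(\al)=f\theta$. I first want to write $\theta=\ep\ga$ for some $\ga$. By the exactness in Theorem \ref{long}, it suffices to show that the inclusion $\iota:\syz^{n+1}N\rt P_n$ satisfies $\iota\theta=0$ in $\Ext^{n+1}_{\C}(M,P_n)$. Naturality of $\Phi$ gives $\iota\theta=\Phi_{P_n}(\iota\ep\de_N)$, and $\iota\ep$ splits because it is the push-out of $\ep$ along its own inflation. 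Hence $\iota\theta=\Phi_{P_n}(0)=0$, and we obtain $\ga$ with $\Delta^n_M(\ga)=\theta$.

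It then remains to check that $\Psi_\ga=\Phi$. Both are natural transformations, and they agree on $\ep\de_N$. Since Proposition \ref{epi1} applies to any natural transformation, both are determined by their value on $\ep\de_N$, so they coincide. The one point to verify is that Proposition \ref{epi1} really covers $\Psi_\ga$ with this particular syzygy sequence, and it does, since the proposition allows an arbitrary syzygy sequence.

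I expect the main obstacle to be bookkeeping rather than ideas. Two identifications need care. The first is $\Psi_\ga(\ep\de_N)=\ep\ga$, which requires recognizing that the $\alpha_1$ attached to $\ep\de_N$ by Proposition \ref{1} can be taken to be $\ep$ itself, using the identity morphism of sequences. The second is confirming that the connecting map of Theorem \ref{long} is exactly left splicing by $\ep$, so that Lemma \ref{p} applies in the form needed.
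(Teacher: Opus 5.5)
Your proposal is correct and follows essentially the same route as the paper: surjectivity comes from Proposition \ref{epi1}, the exact sequence of Theorem \ref{long} for $\ep$, and the vanishing of $\iota\theta$ by naturality, while the kernel is identified as $\p$ via both directions of Lemma \ref{p}. The only cosmetic difference is that you extend the fixed $\de_N$ by one step, whereas the paper decomposes an arbitrary $(n+1)$-syzygy sequence; your choice makes the identification $\Psi_{\ga}(\ep\de_N)=\ep\ga$ immediate.
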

\begin{proof}
By Corollary \ref{c3}, $\Psi:\Ext^n_{\C}(M, \syz^nN)\lrt\nat(\Ext^{n+1}_{\C}(N, -), \Ext^{n+1}_{\C}(M, -))$, sending each $\ga$ to $\Psi_{\ga}$, is a morphism of abelian groups.  We claim that $\Psi$ is an epimorphism. To see this, take a natural transformation $\Phi:\Ext^{n+1}_{\C}(N,-)\lrt\Ext^{n+1}_{\C}(M, -)$. By making use of Proposition \ref{epi1}, we only need to show that for a given syzygy sequence $\de_N\in\Ext^{n+1}_{\C}(N, \syz^{n+1}N)$, there exists an element $\ga\in\Ext^n_{\C}(M, \syz^nN)$ such that  $\Psi_{\ga}(\de_N)=\Phi_{\syz^{n+1}N}(\de_N)$. Decompose $\de_N=\delta\de'_N$ with $\delta:\syz^{n+1}N\st{a}\rt P_n\rt\syz^nN$ and $\de'_N\in\Ext^n_{\C}(N, \syz^nN)$. So one obtains the following commutative diagram of abelian groups with exact rows:
\[\xymatrix{\Ext^n_{\C}(N, \syz^nN)\ar[r]^{\Delta^n_N} & \Ext^{n+1}_{\C}(N, \syz^{n+1}N)\ar[r]^{\epsilon}\ar[d]_{\Phi_{\syz^{n+1}N}}& \Ext^{n+1}_{\C}(N, P_n)\ar[d]_{\Phi_{P_n}} \\ \Ext^n_{\C}(M, \syz^nN)\ar[r]^{\Delta^n_M}& \Ext^{n+1}_{\C}(M, \syz^{n+1}N)\ar[r]^{\eta}& \Ext^{n+1}_{\C}(M, P_n) .}\]Since $\epsilon\Delta_N^n(\de'_N)=\epsilon(\delta\de'_N)=\epsilon(\de_N)=0$, we have $\eta\Phi_{\syz^{n+1}N}(\de_N)=0.$ Thus there is an element $\ga\in\Ext^n_{\C}(M, \syz^nN)$ such that $\Delta^n_M(\ga)=\Phi_{\syz^{n+1}N}(\de_N)$, namely, $\Phi_{\syz^{n+1}N}(\de_N)=\delta\ga$. By our definition, $\delta\ga=\Psi_{\ga}(\syz^{n+1}N)(\de_N)$, and so,  $\Phi_{\syz^{n+1}N}(\de_N)=\Psi_{\ga}(\syz^{n+1}N)(\de_N)$, as desired. Hence, it remains to show that $\ker(\Psi)=\p$.  First, one should note that by Theorem \ref{t1}, for any element $\ga\in\Ext^n_{\C}(M, \syz^nN)$ and $\al\in\Ext^{n+1}_{\C}(N, T)$, we have $\Psi_{\ga}(T)(\al)=\Delta_M^n(\ga)$, where $\Delta_M^n:\Ext^n_{\C}(M, \syz^nN)\lrt\Ext^{n+1}_{\C}(M, T)$ is the connecting homomorphism associated to $\alpha_1:T\rt G\rt\syz^nN$. This fact, in conjunction with  Lemma \ref{p}, ensures that each object in $\p$ lies in $\ker(\Psi)$. Next,  assume that $\ga\in\Ext^n_{\C}(M, \syz^nN)$ such that $\Psi_{\ga}=0$. So, for a given syzygy sequence $\de_N\in\Ext^{n+1}_{\C}(N, \syz^{n+1}N)$, $0=\Psi_{\ga}(\syz^{n+1}N)(\de_N)=\Delta_M^n(\ga)$, where $\Delta_M^n:\Ext^n_{\C}(M, \syz^nN)\lrt\Ext^{n+1}_{\C}(M, \syz^{n+1}N)$ is the connecting homomorphism associated to the extension $\syz^{n+1}N\rt P_n\rt\syz^nN$. Now the reverse implication of Lemma \ref{p} yields that $\ga\in\p$. So the proof is completed.
\end{proof}

The result below indicates that, in the case $n=0$, Theorem \ref{t2} recovers the Hilton-Rees theorem, see \cite[Theorem 7]{mart}.
\begin{cor}\label{c33}For any pair of objects $M, N\in\C$, the correspondence
$$\uhom_{\C}(M, N)\lrt\nat(\Ext^1_{\C}(N, -), \Ext^1_{\C}(M, -)):~~\bar{f}\mapsto\Ext^1(f, -)$$
is an isomorphism of abelian groups.
\end{cor}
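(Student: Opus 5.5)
This is the case $n=0$ of Theorem \ref{t2}, so the plan is to specialise that theorem and identify each ingredient with its classical counterpart.

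First, the source. With $n=0$ the syzygy sequence $\de_N\in\Ext^0_{\C}(N,\syz^0N)$ is the identity of $N$, so $\syz^0N=N$. Moreover $\Ext^0_{\C}(M,\syz^0N)=\hom_{\C}(M,N)$ by the convention in \ref{gener}(c).

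Next, the subgroup $\p$. By Definition \ref{x} and Remark \ref{ppp} with $t=0$, $\p(M,N)$ is the subgroup of $\hom_{\C}(M,N)$ consisting of the morphisms $f=g\be$ with $\be:M\rt P$, $g:P\rt N$ and $P$ projective. These are exactly the morphisms factoring through a projective object. Hence $\hom_{\C}(M,N)/\p=\uhom_{\C}(M,N)$.

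Then, the map. For $n=0$ and $\al\in\Ext^1_{\C}(N,T)$, Proposition \ref{1} allows us to take $\alpha_1=\al$, since $\al\de_N=\al$ and the identity is a morphism with fixed ends. Therefore $\Psi_f(T)(\al)=\al f$, i.e. $\Psi_f=\Ext^1(f,-)$, as already noted at the end of the proof of Corollary \ref{c2}. The only point to check is that the well-definedness argument of Theorem \ref{t1} (independence of the choice of $\alpha_1$) is compatible with this choice. It is, because any $\alpha'_1$ with $\alpha'_1\de_N\rt\al$ is equivalent to $\al$.

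Finally, Theorem \ref{t2} gives that $\bar f\mapsto\Psi_f=\Ext^1(f,-)$ is an isomorphism $\uhom_{\C}(M,N)\rt\nat(\Ext^1_{\C}(N,-),\Ext^1_{\C}(M,-))$.

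I expect no real obstacle. The only care needed is reading Theorem \ref{t2} and Lemma \ref{p} at $n=0$, where $\Ext^0=\hom$ and the connecting map $\Delta^0_M$ sends $f$ to the pull-back $\ep f$. That proof uses a syzygy sequence $\syz^1N\rt P_0\rt N$ of length one, which exists because $\C$ has enough projectives, so all steps remain valid.
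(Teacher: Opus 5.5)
Your proposal is correct and follows the paper's own argument: specialise Theorem~\ref{t2} to $n=0$, and identify $\Ext^0_{\C}(M,\syz^0N)/\p$ with $\uhom_{\C}(M,N)$, since at $t=0$ the subgroup $\p$ consists of the morphisms factoring through projectives. You then use the observation, made at the end of the proof of Corollary~\ref{c2}, that $\alpha_1=\al$ when $n=0$, so $\Psi_f=\Ext^1(f,-)$; your check that Lemma~\ref{p} and the surjectivity step still work at $n=0$ is a harmless detail the paper leaves implicit.
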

\begin{proof}Since $\Ext^0_{\C}(M, \syz^0N)=\hom_{\C}(M, N)$, and the push-out is the composition of morphisms,  the subgroup $\p$ of  $\Ext^0_{\C}(M, \syz^0N)$, exactly contains those morphisms $M\rt N$ that factor through a projective object. That is, $\Ext^0_{\C}(M, \syz^0N)/{\p}=\uhom_{\C}(M, N)$. Now Theorem \ref{t2} finishes the proof.
\end{proof}

In the following, we will observe that if $\C=R$-$\md$, the category of all finitely presented left $R$-modules, then the quotient group in Theorem \ref{t2} can be specified as natural transformations between $\Tor$-functors. In this direction,  the Auslander-Gruson-Jensen duality plays a crucial role.  So let us recall it from \cite{aus1, gj}.

\begin{s}{\sc Auslander--Gruson-Jensen duality}\\   Assume that $R$ is an associative ring with identity, and $R$-$\md$ (resp. $\md$-$R$) is the category of finitely presented left (resp. right) $R$-modules. Assume that ${\mathsf{fp}}(R$-$\md, \mathsf{Ab})$ is the category consisting of all finitely presented additive functors $F:R$-$\md\lrt\mathsf{Ab}$, where $\mathsf{Ab}$ is the category of abelian groups. It is known that this is an abelian category with kernels and cokernels computed object-wise, see for example \cite[Section 2]{aus}. The Auslander-Gruson-Jensen duality, which was first written down explicitly by Auslander in \cite{aus1} and by Gruson
and Jensen in \cite{gj}, says that, there is an equivalence
\begin{center}
$d: {\mathsf{fp}}(R$-$\md, \mathsf{Ab})\lrt ({\mathsf{fp}}(\md$-$R, \mathsf{Ab}))^{op},$
\end{center}
sending any finitely presented functor $F:R$-$\md\rt\mathsf{Ab}$ to the functor $dF:\md$-$R\rt\mathsf{Ab}$, defined by $(dF)M=\nat(F, M\otimes_R-)$ for any finitely presented right $R$-module $M$. In particular, if $F=\hom_R(X, -)$, for some $X\in R$-$\md$, then by using Yoneda Lemma, we infer that $d(\hom_R(X, -))=-\otimes_RX$.
\end{s}
The Auslander--Gruson-Jensen duality, combined with Theorem \ref{t2}, yields the following interesting result.
\begin{cor}\label{agj}For any  finitely presented left $R$-modules $M, N$, there is an isomorphism of groups
$$\Ext^n_{\C}(M, \syz^nN)/{\p}\cong\nat(\Tor_{n+1}^R(-, M), \Tor_{n+1}^R(-, N)).$$
\end{cor}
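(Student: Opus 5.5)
By Theorem \ref{t2}, applied to $\C=R$-$\md$ (which has enough projectives: every finitely presented module is a quotient of a finitely generated free module), we have $\Ext^n_{\C}(M, \syz^nN)/{\p}\cong\nat(\Ext^{n+1}_R(N,-),\Ext^{n+1}_R(M,-))$. So it suffices to show
$$\nat(\Ext^{n+1}_R(N,-),\Ext^{n+1}_R(M,-))\cong\nat(\Tor_{n+1}^R(-,M),\Tor_{n+1}^R(-,N)).$$
The plan is to check that the functors involved are finitely presented and then apply the duality $d$.

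First, $\Ext^{n+1}_R(N,-)$ lies in ${\mathsf{fp}}(R$-$\md,\mathsf{Ab})$. Take a resolution of $N$ by finitely generated projectives, whose syzygies are finitely presented. For the extension $\syz^{n+1}N\rt P_n\rt\syz^nN$, dimension shifting gives an exact sequence
$$\hom_R(P_n,-)\lrt\hom_R(\syz^{n+1}N,-)\lrt\Ext^{n+1}_R(N,-)\lrt 0,$$
using $\Ext^{n+1}_R(N,-)\cong\Ext^1_R(\syz^nN,-)$ and $\Ext^1_R(P_n,-)=0$. This is a finite presentation by representables, and the same holds for $M$.

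Next, apply $d$. It is contravariant exact, since it is an equivalence onto an opposite category of abelian categories, so it sends cokernels to kernels. By the Yoneda remark, $d\hom_R(X,-)=-\otimes_RX$. Hence $d\Ext^{n+1}_R(N,-)$ is the kernel of
$$-\otimes_R\syz^{n+1}N\lrt -\otimes_R P_n.$$
From the long exact $\Tor$ sequence of $\syz^{n+1}N\rt P_n\rt\syz^nN$, this kernel is $\Tor_1^R(-,\syz^nN)\cong\Tor_{n+1}^R(-,N)$. We must also check that the map induced by $d$ is the natural one, i.e.\ that the identification is compatible with the Yoneda isomorphism applied to the inclusion $P_n$-map. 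This follows from naturality of Yoneda, since $d$ on representables is $\nat(\hom_R(X,-),M'\otimes_R-)\cong M'\otimes_RX$.

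Finally, since $d$ is an equivalence onto the opposite category, it induces a bijection
$$\nat(\Ext^{n+1}_R(N,-),\Ext^{n+1}_R(M,-))\cong\nat(d\Ext^{n+1}_R(M,-),d\Ext^{n+1}_R(N,-)),$$
which is additive because $d$ is additive. Substituting the identifications above yields $\nat(\Tor_{n+1}^R(-,M),\Tor_{n+1}^R(-,N))$.

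The main obstacle is the middle step: identifying $d\Ext^{n+1}_R(N,-)$ with $\Tor_{n+1}^R(-,N)$ canonically. This relies on the exactness of $d$ and on the kernel computation; the rest is formal.
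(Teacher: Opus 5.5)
Your proof is correct and has the same outline as the paper's: Theorem \ref{t2} for $\C=R\text{-}\md$, then the duality $d$, then the identification $d\,\Ext^{n+1}_R(X,-)\cong\Tor^R_{n+1}(-,X)$. The difference is in how you obtain that identification.

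The paper cites Hilton--Rees \cite[Theorem 1.2]{hr} (or \cite[Corollary 5.4]{aus}). Since $Y\otimes_R-$ is right exact, $\nat(\Ext^{n+1}_R(X,-),Y\otimes_R-)\cong L_{n+1}(Y\otimes_R-)(X)=\Tor^R_{n+1}(Y,X)$, naturally in $X$ and $Y$, and the left side is $(d\,\Ext^{n+1}_R(X,-))(Y)$ by definition.

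You instead write $\Ext^{n+1}_R(N,-)$ as the cokernel of $\hom_R(P_n,-)\to\hom_R(\syz^{n+1}N,-)$. You then compute $d$ using Yoneda and the $\Tor$ long exact sequence of $\syz^{n+1}N\to P_n\to\syz^nN$. This makes your route more self-contained in two ways:
\begin{itemize}
\item it reproves exactly the special case of the cited theorem that is needed;
\item it checks explicitly that $\Ext^{n+1}_R(N,-)$ lies in $\mathsf{fp}(R\text{-}\md,\mathsf{Ab})$. The paper leaves this tacit, even though $d$ is only defined on that category.
\end{itemize}
Two of your steps are heavier than necessary. The kernel computation does not need exactness of $d$: $(dF)(Y)=\nat(F,Y\otimes_R-)$, and $\nat(-,G)$ turns cokernels into kernels for any functor $G$. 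Nor does the identification need to be canonical: any isomorphisms $d\,\Ext^{n+1}_R(N,-)\cong\Tor^R_{n+1}(-,N)$ and $d\,\Ext^{n+1}_R(M,-)\cong\Tor^R_{n+1}(-,M)$ give the required group isomorphism by composition. The paper's citation buys brevity and naturality in $X$, which the statement does not use.

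There is one caveat, which the paper shares. Your reason for $R\text{-}\md$ having enough projectives is that every finitely presented module is a quotient of a finitely generated free module. That is not sufficient, because a deflation in $\C$ must have a finitely presented kernel. For the same reason, your claim that the syzygies of $N$ are finitely presented holds when $R$ is left coherent but fails in general. A counterexample is $N=R/I$ with $I$ a finitely generated left ideal that is not finitely presented. The paper makes the same tacit assumption when it applies Theorem \ref{t2} to $R\text{-}\md$. Both its use of $d$ on $\Ext^{n+1}_R(X,-)$ and your presentation by representables depend on it. So this is not a gap relative to the paper, but you should state the coherence hypothesis explicitly.
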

\begin{proof}
Assume that $X$ and $Y$ are finitely presented left and right $R$-module, respectively.  Since $Y\otimes_R-$ is a right exact functor, by making use of \cite[Theorem 1.2]{hr} or \cite[Corollary 5.4]{aus}, one obtains the following isomorphism: $$\nat(\Ext^{n+1}_R(X, -), Y\otimes_R-)\cong L_{n+1}(Y\otimes_R-)(X)=\Tor_{n+1}^R(Y, X),$$which is natural in $X$ and $Y$. This, in particular, means that $d(\Ext^{n+1}_R(X, -))=\Tor_{n+1}^R(-, X)$, which is natural in $X$. Now the Auslander--Gruson-Jensen duality, together with Theorem \ref{t2} yields the desired result.
\end{proof}

Let us close this section with a couple of results. The authors thank Vitor  Gulisz for pointing these out to them.
\begin{cor}\label{vit}Let $M, N$ be finitely presented left $R$-modules. If $\Ext^i_R(M, R)=0$ for all $1\leq i\leq n$, then $\Ext^n_R(M, \syz^nN)/{\p}\cong\uhom_{R}(M, N)$.
\end{cor}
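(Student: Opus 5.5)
By Theorem \ref{t2}, $\Ext^n_R(M,\syz^nN)/{\p}\cong\nat(\Ext^{n+1}_R(N,-),\Ext^{n+1}_R(M,-))$. So the plan is to compute the right-hand side under the hypothesis $\Ext^i_R(M,R)=0$ for $1\leq i\leq n$, and to show that it reduces to $\nat(\Ext^1_R(N,-),\Ext^1_R(M,-))\cong\uhom_R(M,N)$ (Corollary \ref{c33}). An alternative route, which I will keep as a backup, is to show directly that $\Ext^n_R(M,\syz^nN)/{\p}\cong\uhom_R(M,\syz^0N)$ by dimension shifting along the syzygy sequence $\de_N$. The direct route seems cleaner, so I describe it.

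Write $\de_N$ as a splice of short exact sequences $\epsilon_i:\syz^{i+1}N\rt P_i\rt\syz^iN$ for $0\leq i\leq n-1$. Since $M$ is finitely presented, the vanishing of $\Ext^i_R(M,R)$ for $1\le i\le n$ passes to finitely generated projectives, so $\Ext^i_R(M,P)=0$ for $1\leq i\leq n$ and every $P\in\proj\C$.

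For each $1\leq i\leq n$ I will study the connecting map $\Delta:\Ext^{i-1}_R(M,\syz^{n-i}N)\rt\Ext^i_R(M,\syz^{n-i+1}N)$ attached to $\epsilon_{n-i}$. By Theorem \ref{long}, its cokernel embeds in $\Ext^i_R(M,P_{n-i})=0$, so $\Delta$ is surjective. For $i\ge2$ its kernel is the image of $\Ext^{i-1}_R(M,P_{n-i})$, which is zero. For $i=1$ the kernel is the set of maps $M\rt\syz^{n-1}N$ factoring through $P_{n-1}$; these are exactly the maps factoring through any projective, by the lifting property of $P_{n-1}\rt\syz^{n-1}N$.

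Composing these maps gives a surjection $\hom_R(M,N)\rt\Ext^n_R(M,\syz^nN)$, $f\mapsto\de_Nf$, when the maps are taken in the correct order. The next step is to check that this composite is exactly $f\mapsto \de_N f$, and that it carries the maps factoring through a projective onto $\p$. Note that $f=ba$ with $a:M\rt P$ gives $\de_Nf=(\de_Nb)a$; this is pulled back from a $\p$-extension, and $\p$ is stable under pull-back.

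It remains to show that the kernel of $f\mapsto\de_Nf$ modulo projectives corresponds precisely to $\p$. Here I expect the main obstacle: I need the preimage of $\p$ to consist exactly of the maps factoring through projectives. To handle it, I will first show that any $\p$-element $g\et$, with $\et\in\Ext^n_R(M,P)$ and $n\ge1$, is already zero, because $\Ext^n_R(M,P)=0$. So $\p=0$ when $n\geq1$. The problem then reduces to showing that the kernel of $f\mapsto\de_Nf$ is exactly the set of maps factoring through projectives. I will prove this by induction on $n$, using injectivity of the intermediate connecting maps for $i\ge2$ and the $i=1$ kernel description above. The case $n=0$ is trivial.
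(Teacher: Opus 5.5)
Your argument is correct and gives a genuinely different, much more elementary proof than the paper's, but there is an indexing slip to fix first. The connecting map you want at step $i$ is the one attached to $\epsilon_{i-1}\colon \syz^iN\rt P_{i-1}\rt\syz^{i-1}N$, namely $\Ext^{i-1}_R(M,\syz^{i-1}N)\rt\Ext^i_R(M,\syz^iN)$. The maps $\Ext^{i-1}_R(M,\syz^{n-i}N)\rt\Ext^i_R(M,\syz^{n-i+1}N)$ that you wrote do not compose into a chain starting at $\hom_R(M,N)$. Correspondingly, the $i=1$ kernel should consist of the maps $M\rt N$ that factor through $P_0$, not maps into $\syz^{n-1}N$.

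The same vanishing, $\Ext^i_R(M,P)=0$ for $1\le i\le n$, gives surjectivity at every corrected step and injectivity for $i\ge2$. So after the correction, the composite $f\mapsto\de_Nf$ induces an isomorphism $\uhom_R(M,N)\cong\Ext^n_R(M,\syz^nN)$. Your observation that $\p=0$ in $\Ext^n_R(M,\syz^nN)$ for $n\ge1$ then finishes the proof, since every $\p$-element has the form $g\et$ with $\et\in\Ext^n_R(M,P)=0$. The opening appeal to Theorem \ref{t2} is never actually used.

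The paper argues quite differently, through Corollary \ref{agj}:
\begin{itemize}
\item it identifies the quotient with $\nat(\Tor^R_{n+1}(-,M),\Tor^R_{n+1}(-,N))$, using Theorem \ref{t2} and Auslander--Gruson--Jensen duality;
\item it dimension-shifts to $\nat(\Tor^R_1(-,\syz^nM),\Tor^R_1(-,\syz^nN))\cong\uhom_R(\syz^nM,\syz^nN)$, by Martsinkovsky's lemma;
\item it rewrites this as $\Tor^R_1(\tr\syz^nM,\syz^nN)\cong\Tor^R_{n+1}(\tr\syz^nM,N)$;
\item only at the end does it invoke Iyama's Proposition 1.1.3, which is the one place the hypothesis $\Ext^i_R(M,R)=0$ enters.
\end{itemize}

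Your route has several advantages. It needs none of Theorem \ref{t2}, the duality, transposes, or the external results. It works verbatim in any exact category with enough projectives, provided $\Ext^i_{\C}(M,P)=0$ for $1\le i\le n$ and all projective $P$. It produces the explicit isomorphism $\bar f\mapsto\de_Nf$. It also shows that dividing by $\p$ is vacuous under the hypothesis, a fact the paper's chain hides. What the paper's route buys instead is the placement of the result inside the natural-transformation and $\Tor$ framework of that section, including the intermediate identification with $\uhom_R(\syz^nM,\syz^nN)$.
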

\begin{proof}Consider the following isomorphisms:
\[\begin{array}{lllll}
\Ext^n_R(M, \syz^nN)/{\p} & \cong \nat(\Tor^{R}_{n+1}(-, M), \Tor^{R}_{n+1}(-, N))\\
& \cong \nat(\Tor^{R}_1(-, \syz^nM), \Tor^R_1(-, \syz^nN))\\ &\cong\uhom_R(\syz^nM, \syz^nN)\\
&\cong\Tor_1^R(\tr\syz^nM, \syz^nN)\\
&\cong\Tor_{n+1}^R(\tr\syz^nM, N)\\
&\cong\uhom_R(M, N),
\end{array}\]
where $\tr\syz^nM$ denotes the transpose of $\syz^nM$. The first isomorphism follows from Corollary \ref{agj}, while the second and fifth ones hold true, thanks to the dimension shifting argument. The validity of the third isomorphism comes from \cite[Lemma 9]{mart}, and the fourth one is well-known, see for example \cite[Lemma 3.9]{yo} or \cite[Proposition 65]{gu}. Finally, the last isomorphism follows from \cite[Proposition 1.1.3]{iy}. So the proof is finished.
\end{proof}

Let $\Lambda$ be an artin algebra, that is, it contains a central artinian subring $R$ and $\Lambda$ is a finitely generated $R$-module. Then $D(-):=\hom_R(-, E(R/{J_R})): \Lambda$-$\md\lrt\md$-$\Lambda$ is a duality, where $J_R$ is the Jacobson radical of $R$. For any integer $n\geq 1$, $\tau_n(-):=D\tr\syz^n(-)$, is called the $n$-Auslander-Reiten translation functor. This notion has been defined by Iyama \cite{iy}, when he developed his
higher Auslander-Reiten theory.

Now \cite[Theorem 1.5]{iy} combined with Corollary \ref{vit}, immediately yields the following result.
\begin{cor}Let $\Lambda$ be an artin algebra and let $M, N$ be finitely generated $\Lambda$-modules. If $\Ext^i_{\Lambda}(M, \Lambda)=0$ for all $1\leq i\leq n$, then $\Ext^n_{\Lambda}(M, \syz^nN)/{\p}\cong D\Ext^{n+1}_{\Lambda}(N, \tau_{n+1}M).$
\end{cor}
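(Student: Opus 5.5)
The plan is to obtain the isomorphism as a composite of four isomorphisms:
\[
\Ext^n_{\Lambda}(M, \syz^nN)/{\p}\cong\uhom_{\Lambda}(M, N)\cong D\Ext^1_{\Lambda}(N, \tau M')\cong\cdots\cong D\Ext^{n+1}_{\Lambda}(N, \tau_{n+1}M).
\]
In practice, the first step is Corollary \ref{vit}: since $\Ext^i_\Lambda(M,\Lambda)=0$ for $1\leq i\leq n$, it yields $\Ext^n_\Lambda(M,\syz^nN)/{\p}\cong\uhom_\Lambda(M,N)$. That corollary was stated for finitely presented modules over an associative ring. Over an artin algebra, finitely generated modules are finitely presented, because $\Lambda$ is noetherian. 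So the hypotheses of Corollary \ref{vit} are met verbatim.

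The second step identifies $\uhom_\Lambda(M,N)$ with $D\Ext^{n+1}_\Lambda(N,\tau_{n+1}M)$ under the same vanishing hypothesis. This is the content of the higher Auslander--Reiten formula, \cite[Theorem 1.5]{iy}: if $\Ext^i_\Lambda(M,\Lambda)=0$ for $0<i<n+1$, i.e.\ for $1\le i\le n$, then $\uhom_\Lambda(M,N)\cong D\Ext^{n+1}_\Lambda(N,\tau_{n+1}M)$, where $\tau_{n+1}=D\tr\syz^{n}$ is normalized as in the paper. I would check carefully that the indexing in Iyama's statement matches. The paper defines $\tau_n=D\tr\syz^n$, but the displayed target involves $\tau_{n+1}M=D\tr\syz^{n+1}M$. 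This index bookkeeping is the main obstacle.

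If the indices do not line up directly, the fallback is to reconstruct the formula by hand. From the proof of Corollary \ref{vit}, $\uhom_\Lambda(M,N)\cong\Tor^\Lambda_{n+1}(\tr\syz^nM,N)$. Next, the standard duality $D\Tor_{j}^\Lambda(X,N)\cong\Ext^{j}_\Lambda(N,DX)$ for artin algebras, which follows from adjunction and exactness of $D$, gives
\[
\Tor_{n+1}^\Lambda(\tr\syz^nM,N)\cong D\Ext^{n+1}_\Lambda(N,D\tr\syz^nM).
\]
Here I use that these groups are finite length $R$-modules, so $DD\cong\mathrm{id}$. The right-hand side is $D\Ext^{n+1}_\Lambda(N,\tau_nM)$ in the paper's normalization. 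I would then reconcile this with Iyama's convention for $\tau_{n+1}$ and verify the isomorphism is as stated, possibly via the identification $\tau_{n+1}=\tau\syz^{n}$ used in \cite{iy}.

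Composing the isomorphisms from the two steps then yields the corollary.
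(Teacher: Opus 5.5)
Your proposal is correct and follows the paper's own route: Corollary \ref{vit} gives $\Ext^n_{\Lambda}(M,\syz^nN)/{\p}\cong\uhom_{\Lambda}(M,N)$, and Iyama's $(n+1)$-Auslander--Reiten duality \cite[Theorem 1.5]{iy}, whose hypothesis $\Ext^i_\Lambda(M,\Lambda)=0$ for $0<i<n+1$ is exactly the assumed one, identifies this with $D\Ext^{n+1}_{\Lambda}(N,\tau_{n+1}M)$. Your worry about the indexing is justified. Iyama's convention is $\tau_{n+1}=\tau\syz^{n}=D\tr\syz^{n}$, so the paper's definition $\tau_n:=D\tr\syz^n$ is off by one; read literally, the case $n=0$ would contradict the classical Auslander--Reiten formula. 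Your fallback computation $\uhom_\Lambda(M,N)\cong\Tor^\Lambda_{n+1}(\tr\syz^nM,N)\cong D\Ext^{n+1}_\Lambda(N,D\tr\syz^nM)$ confirms that the statement holds precisely with Iyama's reading.
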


\section{Natural transformations of extension functors over $n$-Frobenius categories }
This section contains the proof of Theorem \ref{main2}, which concerns the natural transformations between extension functors over an $n$-Frobenius category. By the aid of this, we provide some situations in which natural transformations of (co)homology functors over a ring form a set. Let us  begin  by recalling the following useful remark.

\begin{rem}\label{lr}Assume that $\C$ is an $n$-Frobenius category and $X, Y$ are two objects of $\C$. Then a given extension $\ga\in\Ext^n_{\C}(X, Y)$ lies in $\p$, if there is a morphism $f:P\rt Y$ in $\C$, where $P$ is $n$-projective, and an extension $\be\in\Ext^n_{\C}(X, P)$ such that $\ga=f\be$, see Remark \ref{ppp}. It is worthwhile to note that since $\C$ is $n$-Frobenius, \cite[Proposition 4.3]{bfss} indicates that $\ga\in\p$ if and only if there exists a morphism $g:X\rt Q$ in $\C$ with $Q$ $n$-projective, and $\et\in\Ext^n_{\C}(Q, Y)$ such that $\ga=\et g$.
\end{rem}

\begin{lem}\label{pp}Let $\C$ be an $n$-Frobenius category and let
$\ga\in\Ext^n_{\C}(X, Y)$ be a $\p$-extension. Then for any extension $\alpha\in\Ext^1_{\C}(Y, Z)$, the composition $\alpha\ga=0$ in $\Ext^{n+1}_{\C}(X, Z)$.
\end{lem}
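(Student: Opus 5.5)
The plan is to mimic the first half of the proof of Lemma \ref{p}, but to replace projectivity of $P$ by the vanishing of $\Ext^{n+1}$ out of an $n$-projective object. For this the alternative description of $\p$-extensions in Remark \ref{lr} is the right one. Since $\C$ is $n$-Frobenius, \cite[Proposition 4.3]{bfss} lets us write
\[
\ga=\et g
\]
with $g:X\rt Q$ a morphism in $\C$, $Q$ an $n$-projective object, and $\et\in\Ext^n_{\C}(Q, Y)$.

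Given $\alpha\in\Ext^1_{\C}(Y,Z)$, the next step is associativity of Yoneda composition with pull-backs, as in \cite[Chapter VII, Proposition 3.1]{mit}. This gives
\[
\alpha\ga=\alpha(\et g)=(\alpha\et)g.
\]
Here $\alpha\et$ is an element of $\Ext^{n+1}_{\C}(Q,Z)$, and this group vanishes by the definition of $n$-projectivity, since $n+1>n$. Hence $\alpha\et=0$, and pulling back the zero class along $g$ gives zero because $\Ext^{n+1}(g,Z)$ is a group homomorphism. So $\alpha\ga=0$ in $\Ext^{n+1}_{\C}(X,Z)$.

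The only point needing care is the associativity of splicing with pull-backs. Concretely, pulling $\et$ back along $g$ and then splicing with $\alpha$ should give the same class as splicing first and then pulling back the resulting $(n+1)$-fold extension along $g$. This holds because pull-back only modifies the right-hand end term, and splicing with $\alpha$ happens at the left-hand end $Y$.

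When $n=0$ the argument is even simpler: $\ga$ is a morphism factoring as $X\st{g}\rt Q\st{\et}\rt Y$, and $\alpha\et\in\Ext^1_{\C}(Q,Z)=0$ since $Q$ is $0$-projective.

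An alternative route uses the original description $\ga=f\be$ with $f:P\rt Y$ and $P$ $n$-projective. Then $\alpha\ga=(\alpha f)\be$, and one would need $\Ext^{1}(P,Z)$-type vanishing, which fails for $n$-projectives when $n>0$. This is exactly why I rely on the dual factorization through $Q$ from Remark \ref{lr}, and that step is the crux of the argument.
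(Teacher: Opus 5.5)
Your proposal is correct and follows essentially the same argument as the paper. The paper also uses Remark \ref{lr} (i.e.\ \cite[Proposition 4.3]{bfss}) to write $\ga=\be h$ with $h:X\rt P$ and $P$ $n$-projective, and then concludes from $\alpha\be\in\Ext^{n+1}_{\C}(P,Z)=0$ that $\alpha\ga=(\alpha\be)h=0$.
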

\begin{proof}Since $\ga$ is a $\p$-extension, as mentioned in Remark \ref{lr}, there exists a morphism $h:X\rt P$ in $\C$, where $P$ is $n$-projective, and an element $\be\in\Ext^n_{\C}(P, Y)$ such that $\ga=\be h$. As $P$ is $n$-projective, $\Ext^{n+1}_{\C}(P, -)=0$. So, for a given extension $\alpha\in\Ext^1_{\C}(Y, Z)$, we have that $\alpha\be=0$, implying that $(\alpha\be)h=\alpha(\be h)=\alpha\ga=0$, as desired.
\end{proof}

\begin{rem}Assume that $\C$ has enough projective objects. Then the lifting property yields that for any object $N\in\C$ and an integer $n\geq 1$, the $n$-th syzygy $\syz^nN$ of $N$ is unique, up to projective summands. However, this is not the case for $n$-Frobenius categories, where $\syz^nN$ is obtained using $n$-projective objects. Our key strategy to fix this flaw is the use of (co)angled pairs. Indeed, as observed in \ref{ss}(2), for any pair of syzygy sequence $\de_{N}, \de'_N\in\U_n(N)$, there is an angled pair $\de_N\st{a}\rt\de''_N\st{b}\lf\de'_{N}$. In particular, there are morphisms $\syz^nN\st{a}\rt{\syz''}^nN\st{b}\lf{\syz'}^nN$, which are inflations with $n$-projective cokernels, and so, they are $n$-$\Ext$-invertible morphisms.
\end{rem}

In the remainder of the paper, we always assume that $\C$ is an $n$-Frobenus category, with $n$ a non-negative integer.

\begin{prop}\label{ll}For a given element  $\al\in\Ext^{n+1}_{\C}(N, T)$, there exists a unit extension $\de_N\in\Ext^n_{\C}(N, \syz^nN)$ and an extension $\alpha_1\in\Ext^1_{\C}(\syz^nN, T)$ such that there is a morphism $\alpha_1\de_N\rt\al$ with fixed ends. Moreover, if $\be\in\Ext^{n+1}_{\C}(N, T)$ is another extension, then there exists a unit extension $\de'_N\in\Ext^n_{\C}(N, {\syz'}^nN)$ and morphisms $\alpha_1\de'_N\rt\al$ and $\beta_1\de'_N\rt\be$ with fixed ends.
\end{prop}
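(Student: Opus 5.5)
Our plan is to adapt the proof of Proposition \ref{1}. There, projectivity of the $P_i$ produced the comparison map from $\de_N$ to the tail of $\al$. Here the $n$-projectives need not lift, so we will build the unit extension $\de_N$ directly from $\al$ and the enough-$n$-projectives hypothesis.

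Write $\al$ as the splice of $\alpha': T\rt X_n\rt L$ and $\alpha'': L\rt X_{n-1}\rt\cdots\rt X_0\rt N$, with $\alpha''\in\Ext^n_{\C}(N,L)$. By the right unit factorization \ref{ss}(3) (RUF, \cite[Proposition 5.2]{bfss}) applied to $\alpha''$, there are a unit extension $\de_N\in\U_n(N)$, with left end some syzygy $\syz^nN$, and a morphism $f:\syz^nN\rt L$ with $\alpha''=f\de_N$. Read as the LUF form $g\delta'$, this is exactly a factorization through a unit extension ending at $N$. We replace the equality in $\Ext^n$ by an honest morphism of extensions $\de_N\rt\alpha''$ with fixed right end and left component $f$. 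We expect this to be available from the construction in \cite{bfss}, or after enlarging the middle terms by $n$-projective summands via \cite[Remark 2.7(1)]{bfss}, exactly as in the proof of Proposition \ref{epi1}. Next we take the pull-back $\alpha_1:=\alpha' f\in\Ext^1_{\C}(\syz^nN,T)$, as in Proposition \ref{1}, and splice. This gives a morphism $\alpha_1\de_N\rt\al$ with fixed ends, proving the first statement.

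For the second statement, we apply the first part to $\be$ to get $\de_N^{\be}$ and $\beta_1$. By \ref{ss}(2) (angled pairs), the two unit extensions $\de_N$ and $\de_N^{\be}$ admit inflations $a,b$ with $n$-projective cokernels and a common unit extension $\de'_N=a\de_N=b\de_N^{\be}$, with left end ${\syz'}^nN$. Since $a:\syz^nN\rt{\syz'}^nN$ is an inflation with $n$-projective cokernel $Q$, the sequence $\alpha_1\in\Ext^1_{\C}(\syz^nN,T)$ extends along $a$. From the long exact sequence (Theorem \ref{long}) for $\syz^nN\st{a}\rt{\syz'}^nN\rt Q$, the obstruction lives in $\Ext^2_{\C}(Q,T)$, which vanishes when $n\ge1$ because $Q$ is $n$-projective. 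So there is $\tilde\alpha_1\in\Ext^1_{\C}({\syz'}^nN,T)$ with $\tilde\alpha_1 a=\alpha_1$. Then $\tilde\alpha_1\de'_N=\tilde\alpha_1 a\de_N=\alpha_1\de_N$, and we can realize this as a morphism of extensions by composing the canonical morphisms for pull-back and push-out. Composing with $\alpha_1\de_N\rt\al$ gives the required morphism $\tilde\alpha_1\de'_N\rt\al$ with fixed ends, and we do the same for $\be$. We rename the resulting classes $\alpha_1,\beta_1$, as in the statement.

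The case $n=0$ is degenerate. Then $\de_N$ is the identity and $\al=\alpha_1$, and the argument above does not apply directly, so we handle it separately.

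The main obstacle is obtaining genuine morphisms of extensions with fixed ends, rather than equalities of Yoneda classes, when no lifting property is available. That concerns both the comparison $\de_N\rt\alpha''$ and the compatibility $a\de_N\leftarrow\de_N$. Our fallback is to use the explicit pull-back and push-out diagrams underlying RUF and angled pairs in \cite{bfss}. These are zigzags of morphisms with fixed ends, and we would arrange for them to point in the required direction.
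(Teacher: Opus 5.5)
Your second part fails. You extend $\alpha_1$ along the inflation $a:\syz^nN\rt{\syz'}^nN$ by claiming $\Ext^2_{\C}(Q,T)=0$. But $n$-projectivity of $Q$ only gives $\Ext^i_{\C}(Q,-)=0$ for $i>n$, so this vanishing holds for $n\le 1$, not for $n\ge 1$. For $n\ge 2$ the obstruction, namely the splice of $\alpha_1$ with $\syz^nN\rt{\syz'}^nN\rt Q$, has no reason to vanish.

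This is more than a bad justification. Since $\xi(a\de_N)=(\xi a)\de_N$, every class of the form $\xi\de'_N$ is already of the form $\eta\de_N$. So moving along an angled pair can only shrink the set of classes in $\Ext^{n+1}_{\C}(N,T)$ that factor through the unit extension. With merely $n$-projective middle terms, $\eta\mapsto\eta\de_N$ need not be surjective (which is why the statement lets the unit extension depend on $\al$), so nothing guarantees that $\al$ survives the passage to $\de'_N$.

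Even when $\tilde\alpha_1$ exists (say $n=1$), the morphisms you build are $\alpha_1\de_N\rt\tilde\alpha_1\de'_N$ and $\alpha_1\de_N\rt\al$. That is a span, not a morphism $\tilde\alpha_1\de'_N\rt\al$, so you only get an equality of Yoneda classes.

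The paper avoids angled pairs here altogether. Write $\be$ as the splice of $\beta':T\rt Y_n\rt L'$ and $\beta''\in\Ext^n_{\C}(N,L')$, and apply the first part once to the single extension $(\alpha''\oplus\beta'')\Delta\in\Ext^n_{\C}(N,L\oplus L')$, where $\Delta:N\rt N\oplus N$ is the diagonal. This gives a morphism $\de'_N\rt(\alpha''\oplus\beta'')\Delta$ with right component $1_N$. Compose it with the canonical morphism to $\alpha''\oplus\beta''$ and with the two projections. You get morphisms $\de'_N\rt\alpha''$ and $\de'_N\rt\beta''$ with right component $1_N$ and left components $g:{\syz'}^nN\rt L$ and $g':{\syz'}^nN\rt L'$. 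Then $\alpha_1:=\alpha'g$ and $\beta_1:=\beta'g'$ work, with every morphism pointing the right way.

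The first part is also missing its key step. An equality $\alpha''=f\de_N$ of classes (this is the LUF, not the RUF) does not by itself give a morphism of extensions $\de_N\rt\alpha''$. Moreover, \cite[Remark 2.7(1)]{bfss}, as used in Proposition \ref{epi1}, goes in the opposite direction: it turns a morphism into an equality. What you need is to build $\de_N$ together with the morphism, which is what the paper takes from \cite[Lemma 5.2(2)]{bfss}. Concretely:

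Write $\alpha''$ as the splice of conflations $K_i\rt X_i\rt K_{i-1}$ for $0\le i\le n-1$, with $K_{-1}=N$ and $K_{n-1}=L$, and put $K'_{-1}=N$, $h_{-1}=1_N$. Inductively, pull back $X_i\rt K_{i-1}$ along $h_{i-1}:K'_{i-1}\rt K_{i-1}$ to get $K_i\rt X'_i\rt K'_{i-1}$. Choose a deflation $P_i\rt X'_i$ with $P_i$ $n$-projective. Let $K'_i$ be the kernel of the deflation $P_i\rt X'_i\rt K'_{i-1}$, and let $h_i:K'_i\rt K_i$ be the induced map.

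After $n$ steps you have a unit extension $\de_N$ with left end $K'_{n-1}$ and a morphism $\de_N\rt\alpha''$ with fixed right end and left component $f=h_{n-1}$. This uses only enough $n$-projectives and no lifting property. Your pull-back and splice then finish the first part as you describe.
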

\begin{proof}Decompose $\al=\alpha_n\al'$ with $\alpha_n: T\rt X_n\rt L$ and $\al': L\rt X_{n-1}\rt\cdots\rt X_0\rt N$. In view of \cite[Lemma 5.2(2)]{bfss},   one has the following commutative diagram with exact rows:
{\footnotesize\[\xymatrix{\de_N:{\syz}^{n}N\ar[d]_{f}\ar[r] & P_{n-1}\ar[d]\ar[r] & \cdots\ar[r] & P_0\ar[r]\ar[d] & N\ar@{=}[d]\\ \al':L\ar[r] & X_{n-1}\ar[r] & \cdots\ar[r] & X_0\ar[r] & N,}\]}where each $P_i$ is $n$-projective. Setting $\alpha_1:=\alpha_nf$, one obtains the morphism of extensions $\alpha_1\de_N\lrt\al$ with fixed ends. For the second assertion, decompose $\be=\beta_n\be'$ with $\beta_n:T\rt Y_n\rt L'$ and $\be': L'\rt Y_{n-1}\rt\cdots\rt Y_0\rt N$. Set $\et:=(\al'\oplus\be')\Delta$, where $\Delta:N\st{{{\tiny {\left[\begin{array}{ll} 1 \\ 1 \end{array} \right]}}}}\lrt N\oplus N$ is a morphism in $\C$. Another use of \cite[Lemma 5.2(2)]{bfss}, yields the following commutative diagram:
{\footnotesize\[\xymatrix{\de'_N:{\syz'}^{n}N\ar[d]_{{{{\tiny {\left[\begin{array}{ll} g \\ g' \end{array} \right]}}}}}\ar[r] & P'_{n-1}\ar[d]\ar[r] & \cdots\ar[r] & P'_0\ar[r]\ar[d] & N\ar@{=}[d]\\ \et:L\oplus L'\ar[r] & T_{n-1}\ar[r] & \cdots\ar[r] & T_0\ar[r] & N.}\]}Now putting $\alpha_1:=\alpha_ng$ and $\beta_1:=\beta_ng'$, one may obtain the morphism of extensions $\alpha_1\de'_N\lrt\al$ and $\beta_1\de'_N\lrt\be$ with fixed ends. So the proof is finished.
\end{proof}

\begin{s}Assume that $a:M\rt M'$ is an $n$-$\Ext$-invertible morphism in $\C$ and $\ga\in\Ext^n_{\C}(M, N)$ is given. So  one may find an element $\ga'\in\Ext^n_{\C}(M', N)$ such that $\ga-\ga'a$ is a $\p$-extension. To simplify the notation, we put $\ga':=_{_{\p}}\ga a^{-1}$. Similarly, for any element $\be\in\Ext^n_{\C}(N, M')$, there is  $\be'\in\Ext^n_{\C}(N, M)$ such that $\be -a\be'$ is a $\p$-extension. In this case, we write $\be':=_{_{\p}}a^{-1}\be$.
\end{s}

\begin{rem}\label{3s1}Assume that a unit extension $\de_N\in\Ext^n_{\C}(N, \syz^nN)$ and  $\al\in\Ext^{n+1}_{\C}(N, T)$ are given. According to Proposition \ref{ll}, there exists a unit extension $\de'_N\in\Ext^{n}_{\C}(N, {\syz'}^nN)$ and a morphism of extensions $\alpha_1\de'_N\lrt\al$ with fixed ends, in which $\alpha_1\in\Ext^1_{\C}({\syz'}^nN, T)$.
Now for a given element $\ga\in\Ext^n_{\C}(M, \syz^nN)$, we infer that $b^{-1}a\ga\in\Ext^n_{\C}(M, {\syz'}^nN)$, where $\de_N\st{a}\rt\de''_N\st{b}\lf\de'_N$ is an angled pair. In particular, setting $\ga':=b^{-1}a\ga$, we have that $\alpha_1\ga'\in\Ext^{n+1}_{\C}(M, T)$.
\end{rem}
\begin{prop}\label{3p1}With the notation of  Remark \ref{3s1}, $$\Psi_{\ga}(T):\Ext^{n+1}_{\C}(N, T)\lrt\Ext^{n+1}_{\C}(M, T):~~ \al\mapsto\alpha_1\ga'$$ is a morphism of abelian groups.
\end{prop}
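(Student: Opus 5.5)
The plan is to copy the proof of Theorem \ref{t1}: first show that $\Psi_{\ga}(T)$ is well defined, then show it is additive. In the $n$-Frobenius setting several choices enter the definition: the unit extension $\de'_N$, the lift $\alpha_1$, the angled pair $\de_N\st{a}\rt\de''_N\st{b}\lf\de'_N$, and the representative $\ga'=_{\p}b^{-1}a\ga$. Since $\ga'$ is determined only modulo $\p$, I will show that $\alpha_1\ga'$ is independent of every choice.

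\textbf{Step 1 (the $\p$-ambiguity in $\ga'$).} Two choices of $\ga'$ differ by a $\p$-extension in $\Ext^n_{\C}(M,{\syz'}^nN)$. By Lemma \ref{pp}, $\alpha_1$ composed with a $\p$-extension vanishes. So $\alpha_1\ga'$ depends only on the class of $\ga'$ modulo $\p$. Remark \ref{invert}, applied to the $n$-$\Ext$-invertible inflations $a$ and $b$, guarantees that $b^{-1}a\ga$ exists and is well defined modulo $\p$.

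\textbf{Step 2 (dependence on $\alpha_1$ for a fixed $\de'_N$).} Suppose $\alpha_1\de'_N\rt\al$ and $\tilde\alpha_1\de'_N\rt\al$ are both morphisms with fixed ends. As in Proposition \ref{ll}, write $\alpha_1=\alpha_n f$ and $\tilde\alpha_1=\alpha_n f'$, where $f,f'$ come from comparison maps $\de'_N\rt\al'$. I expect a comparison-type argument for $n$-projective resolutions (\cite[Lemma 5.2]{bfss} and its uniqueness counterpart) to show that $(f-f')\de'_N=0$ in $\Ext^n_{\C}(N,L)$. My first attempt: $\alpha_1-\tilde\alpha_1=\alpha_n(f-f')$, and I want $(\alpha_1-\tilde\alpha_1)\ga'=0$. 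Use a $\ruf$/$\luf$ of $\ga'$ (\ref{ss}(3)) to rewrite $\ga'=g\de$ with $\de$ a unit extension ending at $M$. Then compare $f-f'$ through $n$-projectives so that $(\alpha_1-\tilde\alpha_1)\ga'$ factors through $\Ext^{n+1}_{\C}(P,-)=0$ for some $n$-projective $P$.

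\textbf{Step 3 (dependence on $\de'_N$ and on the angled pair).} Given two unit extensions $\de'_N,\de'''_N$ with lifts $\alpha_1,\alpha_1'''$, connect them by an angled pair $\de'_N\st{c}\rt\de^{iv}_N\st{d}\lf\de'''_N$. Using the pushout and pullback calculus, I will show $\alpha_1 c^{-1}\de^{iv}_N$ and $\alpha_1''' d^{-1}\de^{iv}_N$ both map to $\al$, up to $\p$-corrections. By Step 2 they can then be compared over the single unit extension $\de^{iv}_N$. Combined with $\ga'$ transforming as $c^{-1}$ and $d^{-1}$ (Remark \ref{invert}), this yields equality.

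\textbf{Step 4 (additivity).} Given $\al$ and $\be$, the second assertion of Proposition \ref{ll} provides a common $\de'_N$ with morphisms $\alpha_1\de'_N\rt\al$ and $\beta_1\de'_N\rt\be$. As in Theorem \ref{t1}, $(\alpha_1+\beta_1)\de'_N\rt\al+\be$ by \cite[VII, Lemma 3.2]{mit}. Since $\ga'$ is computed from the same $\de'_N$, we get $\Psi_{\ga}(T)(\al+\be)=(\alpha_1+\beta_1)\ga'=\alpha_1\ga'+\beta_1\ga'$.

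The main obstacle is Steps 2 and 3. In the classical case the Comparison Theorem makes $f-f'$ factor through a projective, so the difference of lifts splits. With only $n$-projectives this splitting fails, and the argument must instead use the vanishing of $\Ext^{n+1}_{\C}(P,-)$ together with \cite[Proposition 4.3]{bfss} (right versus left $\p$-factorization) to kill the difference after composing with $\ga'$.
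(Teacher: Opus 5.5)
\textbf{There is a genuine gap: well-definedness, the real content of the proposition, is never proved.}

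Your Steps 1 and 4 are correct and match the paper. Lemma \ref{pp} kills the $\p$-ambiguity of $\ga'$, and additivity uses the common unit extension of Proposition \ref{ll} together with \cite[Chapter VII, Lemma 3.2]{mit}. Steps 2 and 3, however, are only announced, and the mechanisms you propose do not work with $n$-projectives.

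\emph{Step 2.} The identity $(f-f')\de'_N=0$ is immediate, since both $f\de'_N$ and $f'\de'_N$ equal $\al'$. But it yields no homotopy. To extend $u=f-f'$ over $P_{n-1}$ you need $u$ pushed along the first piece ${\syz'}^nN\rt P_{n-1}\rt{\syz'}^{n-1}N$ to vanish in $\Ext^1_{\C}({\syz'}^{n-1}N,L)$. For $n\geq 2$ you cannot get this from the vanishing of $u\de'_N$ in $\Ext^n_{\C}(N,L)$, because dimension shifting fails: the groups $\Ext^i_{\C}(P_j,L)$ with $1\leq i<n$ need not vanish. Nor does an $\luf$ $\ga'=g\de$ produce an $n$-projective through which $(f-f')\ga'$ factors.

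\emph{Step 3.} The element $\alpha_1c^{-1}$ is not defined. The map $c$ is invertible only on $\Ext^n$ modulo $\p$, not on $\Ext^1$. The obstruction to lifting $\alpha_1$ along $c$ lies in $\Ext^2_{\C}(Q,T)$, where $Q$ is the cokernel of $c$, and this need not vanish for $n\geq 2$. Equivalently, the comparison map $f$ need not extend along an arbitrarily chosen angled pair. So there is no lift of $\al$ over $\de^{iv}_N$ to which Step 2 could be applied. Finally, Remark \ref{invert} makes $b^{-1}a\ga$ unique modulo $\p$ only for a \emph{fixed} angled pair. Independence of the choice of angled pair is a separate point that you do not address.

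\textbf{What the paper uses instead.} The missing ingredient is \cite[Proposition 5.8]{bfss}. Given comparison maps $f:{\syz'}^nN\rt L$ and $f':\syz^n_1N\rt L$ from two unit extensions to $\al'$, one can \emph{choose} the angled pair $\de'_N\st{a_2}\rt\de_2\st{b_2}\lf\de_{1N}$ together with a morphism $h$ such that $f=ha_2$ and $f'=hb_2$. The paper inserts this into a commutative diagram of angled pairs, as in \cite[Theorem 7.4]{bfss}. It then computes $f'(b_1^{-1}a_1\ga)=_{\p}hb_2(b_1^{-1}a_1\ga)=_{\p}\cdots=_{\p}f(b^{-1}a\ga)$ and concludes by Lemma \ref{pp}. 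This handles your Steps 2 and 3 and the choice of angled pair all at once.

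\textbf{An alternative repair.} If you prefer to avoid that proposition, use an $\ruf$ rather than an $\luf$ in Step 2:
\begin{enumerate}
\item Write $\ga'=\theta g$ with $\theta\in\U^n({\syz'}^nN)$.
\item Since $\de'_N$ also begins at ${\syz'}^nN$, there is a co-angled pair $\de'_Ne=\theta e'$.
\item This gives $\bigl((f-f')\theta\bigr)e'=(f-f')\de'_Ne=0$.
\item By Remark \ref{invert}, $(f-f')\theta\in\p$, and hence $(f-f')\ga'\in\p$.
\end{enumerate}
The same device, applied over the common target of an angled pair, also settles Step 3 and the independence of the angled pair. Your proposal identifies the obstacle, but some such idea has to be supplied to overcome it.
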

\begin{proof}First we prove that $\Psi_{\ga}(T)$ is well-defined. To do this, assume that there exists a unit extension $\de_{1N}\in\Ext^n_{\C}(N, \syz^n_1N)$ and a morphism $f':\syz^n_1N\rt L$ in which $(\alpha_nf')\de_{1N}\lrt\al$ is a morphism of extensions with fixed ends. Now considering an angled pair $\de_N\st{a_1}\rt\de_1\st{b_1}\lf\de_{1N}$, we prove that $(\alpha_nf)b^{-1}a\ga=_{\p}(\alpha_nf')b_1^{-1}a_1\ga$. One should note that, since there exists a morphism of extensions $(\alpha_nf)b^{-1}a\ga\lrt\alpha_n(f(b^{-1}a\ga))$ with fixed ends, by \cite[Chapter VII, Proposition 3.1]{mit}, $(\alpha_nf)b^{-1}a\ga=\alpha_n(f(b^{-1}a\ga))$. Analoguesly,  $(\alpha_nf')b_1^{-1}a_1\ga=\alpha_n(f'(b_1^{-1}a_1\ga))$. So it suffices to show that  $\alpha_n(f(b^{-1}a\ga))=\alpha_n(f'(b_1^{-1}a_1\ga))$. To this end, similar to the proof of  \cite[Theorem 7.4]{bfss}, one obtains the following diagram of angled pairs:
{\footnotesize \[\xymatrix{&\delta_N\ar[dl]_{a}\ar[dr]^{a_1}& \\ \delta''_{N}~\ar[r]^{a_3}& \delta_3& \delta_1\ar[l]_{b_3}\\ \delta'_{N}\ar[u]^{b}\ar[r]^{a_2}~ & \delta_2\ar[u]^{c_3} & \delta_{1N}\ar[l]_{b_2}\ar[u]^{b_1}.}\]}By making use of \cite[Proposition 5.8]{bfss}, there is a morphism $h$ in $\C$ such that $\ga=h\de_2$, $f=ha_2$ and $f'=hb_2$. In particular, one may have the following diagram in $\C$: {\footnotesize\[\xymatrix{\de'_N~ \ar[r]^{a_2}\ar[rd]_{f}& \de_2\ar[d]^{h}& {\de_1}_{N}\ar[l]_{b_2}\ar[ld]^{f'}\\ & \ga .& }\]}
Thus, we have the following equalities modulo $\p$: $$f'(b_1^{-1}(a_1\ga))=_{\p}hb_2(b_1^{-1}(a_1\ga))=_{\p}h(b_2(b_1^{-1}(a_1\ga)))=_{\p}fa_2^{-1}(b_2(b_1^{-1}(a_1\ga)))$$ $$=_{\p}f(a_2^{-1}(b_2(b_1^{-1}(a_1\ga))))=_{\p} fa_2^{-1}(c_3^{-1}b_3)(a_1\ga)))=_{\p}f(a_2^{-1}c_3^{-1})(b_3(a_1\ga))$$$$=_{\p}f(b^{-1}a_3^{-1})(b_3(a_1\ga))=_{\p}f(b^{-1}(a_3^{-1}b_3a_1)\ga)=_{\p}f(b^{-1}(a\ga)).$$Now Lemma \ref{pp} together with Remark \ref{invert}, ensures that $\Psi_{\ga}(T)$ is well-defined.
Finally, we show that $\Psi_{\ga}(T)$ is a morphism of abelian groups. To this end, consider extensions $\al, \be\in\Ext^{n+1}_{\C}(N, T)$. In view of Proposition \ref{ll}, there are morphisms of extensions $\alpha_1\de'_N\lrt\al$ and $\beta_1\de'_N\lrt\be$ with fixed ends, for some unit extension $\de'_N\in\Ext^n_{\C}(N, {\syz'}^nN)$. This, in conjunction with \cite[Chapter VII, Lemma 3.2]{mit}, gives us the morphism of extensions $(\alpha_1+\beta_1)\de'_N\lrt\al+\be$ with fixed ends. Now putting $\ga':=b^{-1}a\ga$, where $\de_N\st{a}\rt\de''_N\st{b}\lf\de'_N$ is an angled pair, one may obtain the following equalities:
$$\Psi_{\ga}(T)(\al+\be)=(\alpha_1+\beta_1)\ga'=\alpha_1\ga'+\beta_1\ga'=\Psi_{\ga}(T)(\al)+\Psi_{\ga}(T)(\be).$$Thus the proof is completed.
\end{proof}

The result below is proven similar to that of Corollary \ref{c3}. However, for the reader's convenience, its proof is included.
\begin{prop}\label{p33}Let $\de_N\in\Ext^n_{\C}(N, \syz^nN)$ be a unit extension. Then for any object $M\in\C$, the correspondence
$$\Psi:\Ext^n_{\C}(M, \syz^nN)\lrt\nat(\Ext^{n+1}_{\C}(N, -), \Ext^{n+1}_{\C}(M, -)) ~~: ~~\ga\mapsto \Psi_{\ga}$$
is a morphism of abelian groups. 
\end{prop}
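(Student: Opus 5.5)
The proof follows that of Corollary \ref{c3}, with care taken over the fact that in the $n$-Frobenius setting $\Psi_{\ga}(T)(\al)$ is $\alpha_1\ga'$ with $\ga'=b^{-1}a\ga$, and $\ga'$ is only determined modulo $\p$. There are three steps: (i) for each $\ga$, $\Psi_{\ga}$ is a natural transformation; (ii) $\Psi_{\ga}$ depends only on the class of $\ga$ in $\Ext^n_{\C}(M,\syz^nN)$; (iii) $\Psi$ is additive.

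For (i), Proposition \ref{3p1} already shows that each $\Psi_{\ga}(T)$ is a well-defined group homomorphism. For naturality, take $g:T\rt T'$. If $\alpha_1\de'_N\rt\al$ is a morphism with fixed ends, then pushing out along $g$ gives a morphism $(g\alpha_1)\de'_N\rt g\al$ with fixed ends. So we may use the same $\de'_N$, the same angled pair and the same $\ga'$ to compute $\Psi_{\ga}(T')(g\al)=(g\alpha_1)\ga'=g(\alpha_1\ga')$, using associativity of Yoneda composition \cite[Chapter VII, Proposition 3.1]{mit}. The well-definedness in Proposition \ref{3p1} guarantees that this choice of representative is legitimate.

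For (ii), if $\ga=\ga''$ in $\Ext^n_{\C}(M,\syz^nN)$, then $a\ga=a\ga''$. Hence any choice of $b^{-1}a\ga$ is also a valid choice of $b^{-1}a\ga''$, and the two choices differ by a $\p$-extension. By Lemma \ref{pp}, $\alpha_1$ composed with a $\p$-extension is $0$, so $\alpha_1\ga'$ is independent of the choice, and $\Psi_{\ga}=\Psi_{\ga''}$.

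For (iii), push-out along $a$ is additive, so $a(\ga+\ga'')=a\ga+a\ga''$. If $\ga'$ and $\ga'''$ are choices of $b^{-1}a\ga$ and $b^{-1}a\ga''$, then $b(\ga'+\ga''')-a(\ga+\ga'')\in\p$, so $\ga'+\ga'''$ is a valid choice of $b^{-1}a(\ga+\ga'')$. Then
\[\Psi_{\ga+\ga''}(T)(\al)=\alpha_1(\ga'+\ga''')=\alpha_1\ga'+\alpha_1\ga''',\]
using bilinearity of Yoneda composition \cite[Chapter VII, Lemma 3.2]{mit}.

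The main obstacle is the bookkeeping in (ii) and (iii): the symbol $b^{-1}a\ga$ is defined only modulo $\p$, so one must check that its choices behave additively and that Lemma \ref{pp} kills all the ambiguity. Remark \ref{invert} ensures that the $\p$-classes are preserved under the $n$-$\Ext$-invertible maps $a$ and $b$.
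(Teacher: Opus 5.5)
Your proposal is correct and follows the same route as the paper. Naturality comes from Proposition \ref{3p1} together with the argument of Corollary \ref{c2}; you then check independence of the representative of $\ga$, and get additivity from bilinearity of Yoneda composition. Where the paper appeals to a push-out universal property to produce a morphism $b^{-1}a\ga\to b^{-1}a\ga'$, and writes $b^{-1}a(\ga+\ga'')=b^{-1}a\ga+b^{-1}a\ga''$ without comment, you explicitly track that $b^{-1}a\ga$ is only defined modulo $\p$ and dispose of the ambiguity with Remark \ref{invert} and Lemma \ref{pp}, which is if anything the cleaner way to say the same thing.
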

\begin{proof}First we show that $\Psi$ is well-defined. In this direction, take an object $T\in\C$. By  Proposition \ref{3p1}, $\Psi_{\ga}(T):\Ext^{n+1}_{\C}(N, T)\lrt\Ext^{n+1}_{\C}(M, T)$ is a morphism of abelian groups. In particular, one may follow the argument given in the proof of Corollary \ref{c2} and deduce that $\Psi_{\ga}:\Ext^{n+1}_{\C}(N, -)\lrt\Ext^{n+1}_{\C}(M, -)$ is a natural transformation. Next assume that $\ga$ and $\ga'$ are two elements of $\Ext^n_{\C}(M, \syz^nN)$ such that there exists a morphism of extensions $\ga\lrt\ga'$ with fixed ends. For a given object $T\in\C$, we shall prove that $\Psi_{\ga}(T)=\Psi_{\ga'}(T)$. In this direction, take an arbitrary element $\al\in\Ext^{n+1}_{\C}(N, T)$. In view of Proposition \ref{ll}, there exists a unit extension $\de'_N\in\Ext^n_{\C}(N, {\syz'}^nN)$ and $\alpha_1\in\Ext^1_{\C}({\syz'}^nN, T)$, in which there is a morphism of extensions $\alpha_1\de'_N\lrt\al$ with fixed ends. Consider an angled pair $\de_N\st{a}\rt\de''_N\st{b}\lf\de'_N$. Since there is a morphism of extensions $\ga\lrt\ga'$ with fixed ends, using the universal property of push-out, one may find a morphism of extensions $b^{-1}a\ga\lrt b^{-1}a\ga'$ with fixed ends. Thus, $\alpha_1(b^{-1}a\ga)\lrt\alpha_1( b^{-1}a\ga')$ will also be a morphism of extensions with fixed ends. Consequently, as elements of $\Ext^{n+1}_{\C}(M, T)$, these extensions are the same, meaning that  $\Psi_{\ga}(T)(\al)=\Psi_{\ga'}(T)(\al)$, as desired. Thus $\Psi$ is well-defined. Next, we show that $\Psi$ is a morphism of groups. To this end, considering a pair of extensions $\ga, \ga''\in\Ext^n_{\C}(M, \syz^nN)$, one gets the following equalities: $$\Psi_{\ga+\ga''}(T)(\al)=\alpha_1(b^{-1}a(\ga+\ga''))=\alpha_1(b^{-1}a\ga)+b^{-1}a\ga'')=\alpha_1(b^{-1}a\ga)+\alpha_1(b^{-1}a\ga'').$$ But the right-hand side equals to $\Psi_{\ga}(T)(\al)+\Psi_{\ga''}(T)(\al)$, and so,  the proof is finished.
\end{proof}


\begin{prop}\label{epi}Let  $\Phi:\Ext^{n+1}_{\C}(N, -)\rt\Ext^{n+1}_{\C}(M, -)$ be a natural transformation and  $\de_N\in\Ext^{n+1}_{\C}(N, \syz^{n+1}N)$ be a unit extension. Then for any object $T\in\C$ and  $\al\in\Ext^{n+1}_{\C}(N, T)$, there exists a morphism $f:{\syz'}^{n+1}N\rt T$ in $\C$ and $n$-$\Ext$-invertible morphisms  $\syz^{n+1}N\st{a}\rt{\syz''}^{n+1}N\st{b}\lf{\syz'}^{n+1}N$ such that $\Phi_T(\al)=f^*{b^*}^{-1}a^*\Phi_{\syz^{n+1}N}(\de_N)$, where $(-)^*=\Ext^{n+1}(M, -)$.
\end{prop}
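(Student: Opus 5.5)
The plan is to mimic Proposition \ref{epi1}, replacing the projective lifting by the lifting available for unit extensions in an $n$-Frobenius category, and then to use an angled pair to transport the given $\de_N$ to the unit extension produced by the lifting.

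\textbf{Step 1: lifting.} Given $\al\in\Ext^{n+1}_{\C}(N,T)$, write $\al: T\rt X_n\rt\cdots\rt X_0\rt N$. Apply \cite[Lemma 5.2(2)]{bfss}, exactly as in the proof of Proposition \ref{ll} but for length $n+1$. This yields a unit extension $\de'_N\in\Ext^{n+1}_{\C}(N,{\syz'}^{n+1}N)$, whose middle terms are $n$-projective, together with a morphism of extensions $\de'_N\rt\al$ whose left component is $f:{\syz'}^{n+1}N\rt T$. Hence $\al=f\de'_N$ in $\Ext^{n+1}_{\C}(N,T)$.

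\textbf{Step 2: naturality.} Naturality of $\Phi$ along $f$ gives
\[\Phi_T(\al)=f^*\Phi_{{\syz'}^{n+1}N}(\de'_N).\]

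\textbf{Step 3: the angled pair.} By \ref{ss}(2) there is an angled pair $\de_N\st{a}\rt\de''_N\st{b}\lf\de'_N$. So $a\de_N=\de''_N=b\de'_N$ in $\Ext^{n+1}_{\C}(N,{\syz''}^{n+1}N)$, and $a,b$ are inflations with $n$-projective cokernels, hence $n$-$\Ext$-invertible. Naturality of $\Phi$ along $a$ and along $b$ gives
\[a^*\Phi_{\syz^{n+1}N}(\de_N)=\Phi_{{\syz''}^{n+1}N}(a\de_N)=\Phi_{{\syz''}^{n+1}N}(b\de'_N)=b^*\Phi_{{\syz'}^{n+1}N}(\de'_N).\]

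\textbf{Step 4: inverting $b^*$.} Since $b$ is $n$-$\Ext$-invertible, the map $\Ext^{n+1}(M,b)=b^*$ is an isomorphism $\Ext^{n+1}_{\C}(M,{\syz'}^{n+1}N)\rt\Ext^{n+1}_{\C}(M,{\syz''}^{n+1}N)$, by the characterisation recalled in the paragraph on $n$-$\Ext$-invertible morphisms. Therefore
\[\Phi_{{\syz'}^{n+1}N}(\de'_N)={b^*}^{-1}a^*\Phi_{\syz^{n+1}N}(\de_N).\]
Substituting this into Step 2 gives $\Phi_T(\al)=f^*{b^*}^{-1}a^*\Phi_{\syz^{n+1}N}(\de_N)$, as claimed.

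\textbf{Main obstacle.} I expect the delicate point to be Step 1. One has to check that \cite[Lemma 5.2(2)]{bfss} produces a length-$(n+1)$ unit extension mapping to an arbitrary $\al$. One also has to justify replacing $\al$ by $f\de'_N$ up to equivalence, as in \cite[Remark 2.7(1)]{bfss}.

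If the lemma is only available in length $n$, I would proceed as follows. First apply it to the truncation $\al'$ of $\al$, as in Proposition \ref{ll}. This gives $\de\in\Ext^n_{\C}(N,\syz^nN)$ and a morphism $\syz^nN\rt L$. Then take an $n$-projective cover $Q\rt\syz^nN$ with kernel ${\syz'}^{n+1}N$; it exists because there are enough $n$-projectives. The problem is then to lift the composite $Q\rt\syz^nN\rt L$ to $X_n$. That requires $\Ext^1_{\C}(Q,T)=0$, which need not hold for $n\ge1$.

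So I would rely on the cited lemma, which is designed precisely to give such liftings in $n$-Frobenius categories.
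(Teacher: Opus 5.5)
Your proof is correct and follows the paper's argument. The paper gets $\al=f\de'_N$ in one step as a left unit factorization (\ref{ss}(3), i.e.\ \cite[Proposition 5.2]{bfss}), which settles your Step~1 concern, and then uses naturality along $f$, $a$, $b$ and the invertibility of $b^*$ just as you do, merely packaging these into one commutative square with ${b^*}^{-1}$ on both columns. Should you want to derive Step~1 from the length-$n$ lifting, the fix for your fallback is to first pull back $X_n\rt L$ along $\syz^nN\rt L$ (as in Proposition~\ref{1}) and then cover the pull-back by an $n$-projective, rather than trying to lift $Q\rt L$ through $X_n$.
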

\begin{proof}
Assume that $T\in\C$ and $\al\in\Ext^{n+1}_{\C}(N, T)$ is given. Take an $\luf$ $\al=f\de'_N$ of $\al$, where $f:{\syz'}^{n+1}N\rt T$ is a morphism in $\C$. Consider an angled pair $\de_N\st{a}\rt\de''_N\st{b}\lf\de'_N$. Now we obtain the following commutative square:
\[\xymatrix{\Ext^{n+1}_{\C}(N, \syz^{n+1}N)\ar[r]^{\Phi_{\syz^{n+1}N}}\ar[d]_{f^*{b^*}^{-1}a^*} & \Ext^{n+1}_{\C}(M, \syz^{n+1}N)\ar[d]_{f^*{b^*}^{-1}a^*} \\ \Ext^{n+1}_{\C}(N, T)\ar[r]^{\Phi_T} & \Ext^{n+1}_{\C}(M, T).}\]
One should note that $(-)^*$ in the left (resp. right) column stands for $\Ext^{n+1}(N, -)$ (resp. $\Ext^{n+1}(M, -)$). So, going in the counterclockwise side, one gets the equalities $\Phi_T f^*{b^*}^{-1}a^*(\de_N)=\Phi_Tf^*(\de'_N)=\Phi_T(\al)$. Now since the square is commutative, we have that $\Phi_T(\al)=f^*{b^*}^{-1}a^*\Phi_{\syz^{n+1}N}(\de_N)$, where $(-)^*=\Ext^{n+1}(M, -)$. So the proof is finished.
\end{proof}


Now we are ready to prove the main result of this section.

\begin{theorem}\label{nat} Let $M, N$ be arbitrary objects of $\C$ and let $\de_N\in\Ext^n_{\C}(N, \syz^nN)$ be a unit extension. Then the correspondence
$$\Psi: \Ext^n_{\C}(M, \syz^nN)\lrt{\nat}(\Ext^{n+1}_{\C}(N, -), \Ext^{n+1}_{\C}(M, -))~:~~\ga\mapsto \Psi_{\ga}$$
induces an isomorphism
$$\Ext^n_{\C}(M, \syz^nN)/{\p}\st{\cong}\lrt{\nat}(\Ext^{n+1}_{\C}(N, -), \Ext^{n+1}_{\C}(M, -)).$$
\end{theorem}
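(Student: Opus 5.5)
The plan is to follow the structure of the proof of Theorem \ref{t2}, replacing projectives by $n$-projectives and using angled pairs to compensate for the non-uniqueness of syzygies. By Proposition \ref{p33}, $\Psi$ is already a homomorphism of abelian groups. It remains to show two things: that $\Psi$ is surjective, and that $\ker(\Psi)=\p$.

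\emph{Surjectivity.} Let $\Phi$ be a natural transformation and fix a unit extension $\de_{N}^{(n+1)}\in\Ext^{n+1}_{\C}(N,\syz^{n+1}N)$. Choose it so that it factors as $\delta\de_N$ with $\delta:\syz^{n+1}N\rt P_n\rt\syz^nN$ and $P_n$ $n$-projective; one builds it by extending the given $\de_N$ one more step. The long exact sequence of Theorem \ref{long} for $\delta$ gives a commutative diagram with $\Ext^{n+1}_{\C}(-,\syz^{n+1}N)\rt\Ext^{n+1}_{\C}(-,P_n)$ in the rows. Since $\Ext^{n+1}_{\C}(N,P_n)$ vanishes ($P_n$ is $n$-injective, the category being $n$-Frobenius), naturality forces the image of $\Phi_{\syz^{n+1}N}(\delta\de_N)$ in $\Ext^{n+1}_{\C}(M,P_n)$ to vanish. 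That group is itself zero, so exactness gives $\ga\in\Ext^n_{\C}(M,\syz^nN)$ with $\delta\ga=\Phi_{\syz^{n+1}N}(\delta\de_N)$. By construction $\Psi_\ga(\syz^{n+1}N)(\delta\de_N)=\delta\ga$, since we may take $\alpha_1=\delta$ with trivial angled pair. Proposition \ref{epi} then shows that both $\Phi_T(\al)$ and $\Psi_\ga(T)(\al)$ equal $f^*{b^*}^{-1}a^*$ applied to the common value on $\delta\de_N$, so $\Phi=\Psi_\ga$. Here one uses that $a^*,b^*$ are isomorphisms because $a,b$ are $n$-$\Ext$-invertible.

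\emph{The inclusion $\p\subseteq\ker\Psi$.} If $\ga\in\p$, then $b^{-1}a\ga\in\p$ by Remark \ref{invert}. Lemma \ref{pp} then gives $\alpha_1(b^{-1}a\ga)=0$, so $\Psi_\ga=0$.

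\emph{The inclusion $\ker\Psi\subseteq\p$.} This is the main obstacle, because Lemma \ref{p} was stated only for genuine projectives. Suppose $\Psi_\ga=0$. Evaluating on $\delta\de_N$ as above gives $\delta\ga=0$ in $\Ext^{n+1}_{\C}(M,\syz^{n+1}N)$. Exactness then yields $\eta\in\Ext^n_{\C}(M,P_n)$ with $\ga=p\eta$, where $p:P_n\rt\syz^nN$ is the deflation. This exhibits $\ga$ as a push-out along a morphism from an $n$-projective object, i.e.\ $\ga\in\p$ in the sense of Remark \ref{lr}. So the converse of Lemma \ref{p} holds verbatim with $Y=P_n$ $n$-projective; only the exactness of Theorem \ref{long} is needed.

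The delicate point throughout is bookkeeping: one must check that evaluating $\Psi_\ga$ on $\delta\de_N$ genuinely returns $\delta\ga$ under the well-defined recipe of Proposition \ref{3p1}, which is legitimate precisely because $\de_N$ itself may be used as the unit extension there. With that established, the induced map $\Ext^n_{\C}(M,\syz^nN)/\p\rt\nat(\Ext^{n+1}_{\C}(N,-),\Ext^{n+1}_{\C}(M,-))$ is the desired isomorphism.
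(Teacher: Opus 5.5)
Your proposal is correct and follows essentially the same route as the paper's proof. In both, $\Psi$ is a homomorphism by Proposition \ref{p33}; surjectivity comes from lifting $\Phi_{\syz^{n+1}N}(\delta\de_N)$ through $\Delta^n_M$ (onto because $\Ext^{n+1}_{\C}(M,P_n)=0$), followed by Proposition \ref{epi}; and $\ker\Psi=\p$ follows from Lemma \ref{pp} with Remark \ref{invert}, together with the exactness argument of Lemma \ref{p} applied to the $n$-projective $P_n$. Your naturality step showing that the image in $\Ext^{n+1}_{\C}(M,P_n)$ vanishes is redundant once you note that this group is zero, but it is harmless.
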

\begin{proof}
As observed in Proposition \ref{p33}, for any object $M\in\C$,  $$\Psi:\Ext^n_{\C}(M, \syz^nN)\lrt\nat(\Ext^{n+1}_{\C}(N,-), \Ext^{n+1}_{\C}(M, -))~:~~\ga\mapsto\Psi_{\ga}$$ is a morphism of abelian groups. We would like to show that $\Psi$ is an epimorphism. In this direction, take a natural transformation $\Phi:\Ext^{n+1}_{\C}(N,-)\lrt\Ext^{n+1}_{\C}(M, -)$. Consider a unit extension $\be=\delta\de_N\in\Ext^{n+1}_{\C}(N, \syz^{n+1}N)$ with $\delta:\syz^{n+1}N\rt P_n\rt\syz^nN$. So one obtains the following commutative diagram with exact rows:\[\xymatrix{\Ext^n_{\C}(N, \syz^nN)\ar[r]^{\Delta^n_N} & \Ext^{n+1}_{\C}(N, \syz^{n+1}N)\ar[r]^{\epsilon}\ar[d]_{\Phi_{\syz^{n+1}N}}& \Ext^{n+1}_{\C}(N, P_n)\ar[d]_{\Phi_{P_n}} \\ \Ext^n_{\C}(M, \syz^nN)\ar[r]^{\Delta^n_M}& \Ext^{n+1}_{\C}(M, \syz^{n+1}N)\ar[r]^{\eta}& \Ext^{n+1}_{\C}(M, P_n),}\]where $\Delta^n:\Ext^n_{\C}(-, \syz^nN)\lrt\Ext^{n+1}_{\C}(-, \syz^{n+1}N)$ is the connecting homomorphism associated to $\delta:\syz^{n+1}N\rt P_n\rt\syz^nN$. {One should note that since $P_n$ is $n$-projective,  it is an $n$-injective object, implying that $\Ext^{n+1}_{\C}(-, P_n)=0$. This, in particular, yields that $\Delta^n_M$ is an epimorphism.  Hence, }  one may find an element $\ga\in\Ext^n_{\C}(M, \syz^nN)$ such that $\Delta_M^n(\ga)=\Phi_{\syz^{n+1}N}(\be)$. On the other hand, since $\delta\de_N\st{=}\lrt\be$ is a morphism with fixed ends, by our definition, $\Psi_{\ga}(\syz^{n+1}N)(\be)=\delta\ga=\Delta_M^n(\ga)$. Consequently, $\Psi_{\ga}(\syz^{n+1}N)(\be)=\Phi_{\syz^{n+1}N}(\be).$ Now Proposition \ref{epi} ensures that $\Psi$ is an epimorphism. Hence, in order to complete the proof, we need to show that $\ker\Psi=\p$. To this end, assume that $\ga\in\Ext^n_{\C}(M, \syz^nN)$ is a $\p$-extension. Then, by making use of  Lemma \ref{pp} and Remark \ref{invert}, one may infer that $\Psi_{\ga}=0$. Conversely, assume that $\ga\in\ker\Psi$. So, taking $\al:=\delta\de_N$, with $\delta:\syz^{n+1}N\rt P_n\rt\syz^nN$, we have $\Psi_{\ga}(\syz^{n+1}N)(\al)=\delta\ga=0$. Thus, $\Delta^n_M(\ga)=\delta\ga=0$, where $\Delta_M^n:\Ext^n_{\C}(M, \syz^nN)\lrt\Ext^{n+1}_{\C}(M, \syz^{n+1}N)$ is the connceting homomorphism associted to $\delta$. Now, since $P_n$ is $n$-projective,  the same argument given in the proof of the reverse implication in Lemma \ref{p}, yields that $\ga\in\p$. Hence, the proof is completed.
\end{proof}

\begin{rem}\label{rr}Assume that $M, T$ are objects in $\C$ and $\al\in\Ext^{n+1}_{\C}(T, M)$ is given. Decompose $\al=\al'\alpha'_1$ with $\al':M\rt X_n\rt\cdots\rt X_1\rt L$ and $\alpha'_1:L\rt X_0\rt T$. In view of \cite[Lemma 5.2(1)]{bfss},   one has the following commutative diagram with exact rows:
{\footnotesize\[\xymatrix{\al':~~M\ar@{=}[d]\ar[r] & X_n\ar[d]\ar[r] & \cdots\ar[r] & X_1\ar[r]\ar[d] & L\ar[d]_{f}\\ \de_{\syz^{-n}M}:M\ar[r] & P_{n}\ar[r] & \cdots\ar[r] & P_1\ar[r] & \syz^{-n}M,}\]}where each $P_i$ is $n$-projective. Now, putting $\alpha_1:=f\alpha'_1$, one obtains the morphism of extensions $\al\lrt\de_{\syz^{-n}M}\alpha_1$ with fixed ends.
\end{rem}

\begin{s}\label{cov}Assume that $M, N$ are objects of $\C$ and $\de_{\syz^{-n}M}\in\Ext^n_{\C}(\syz^{-n}M, M)$ is a unit extension. Then for a given element $\ga\in\Ext^n_{\C}(\syz^{-n}M, N)$, we assign a natural transformation $\Phi_{\ga}:\Ext^{n+1}_{\C}(-, M)\lrt\Ext^{n+1}_{\C}(-, N)$,  as follows:\\ For a given object $T\in\C$, we take an element $\al\in\Ext^{n+1}_{\C}(T, M)$. As mentioned in Remark \ref{rr}, there is a morphism of extensions $\al\lrt\de_{{\syz'}^{-n}M}\alpha_1$ with fixed ends, where $\de_{{\syz'}^{-n}M}\in\Ext^n_{\C}({\syz'}^{-n}M, M)$ is a unit extension and $\alpha_1\in\Ext^1_{\C}(T, {\syz'}^{-n}M)$. So considering a co-angled pair $\de_{\syz^{-n}M}\st{a}\lf\de_{{\syz''}^{-n}M}\st{b}\rt\de_{{\syz'}^{-n}M}$, one has $(\ga a)b^{-1}\in\Ext^n_{\C}({\syz'}^{-n}M, N)$. Now we define $\Phi_{\ga}(T)(\al):=((\ga a)b^{-1})\alpha_1$, which lies in $\Ext^{n+1}_{\C}(T, N)$. A dual manner given in the proof of Proposition \ref{p33} and using Remark \ref{lr}, reveals that $\Phi_{\ga}:\Ext^{n+1}_{\C}(-, M)\lrt\Ext^{n+1}_{\C}(-, N)$ is a natural transformation.  In particular, as $\C$ has enough $n$-injectives and $n$-injectives coincide with $n$-projectives,   dualizing the argument, yields the result below.
\end{s}

\begin{theorem}\label{th3}With the notation above, the correspondence
$$\Phi:\Ext^n_{\C}(\syz^{-n}M, N)\lrt\nat(\Ext^{n+1}_{\C}(-, M), \Ext^{n+1}_{\C}(-, N))~:\ga\mapsto\Phi_{\ga}$$ induces an isomorphism of abelian groups
$$\bar{\Phi}:\Ext^n_{\C}(\syz^{-n}M, N)/{\p}\st{\cong}\lrt\nat(\Ext^{n+1}_{\C}(-, M), \Ext^{n+1}_{\C}(-, N))~:\bar{\ga}\mapsto\Phi_{\ga}.$$ 
\end{theorem}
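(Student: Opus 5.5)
The plan is to dualize the proof of Theorem \ref{nat} step by step, using that in an $n$-Frobenius category the $n$-projectives and $n$-injectives coincide.

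\textbf{Step 1: $\Phi$ is a homomorphism.} First I will check that $\Phi$ is well defined and additive, as claimed in \ref{cov}. This is the dual of Propositions \ref{3p1} and \ref{p33}.

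Independence from the choice of the factorization $\al\lrt\de_{{\syz'}^{-n}M}\alpha_1$ and of the co-angled pair is handled exactly as in Proposition \ref{3p1}. I use the dual of the angled-pair diagram together with the dual of \cite[Proposition 5.8]{bfss}. I also use the dual of Lemma \ref{pp}: if $\ga\in\p$ and $\alpha\in\Ext^1_{\C}(Z,X)$, then $\ga\alpha=0$. This holds because, writing $\ga=f\be$ with $P$ $n$-projective (hence $n$-injective), we get $\be\alpha\in\Ext^{n+1}_{\C}(Z,P)=0$. Remark \ref{invert} then shows that the ambiguity in $(\ga a)b^{-1}$, which lies only modulo $\p$, disappears after composing with $\alpha_1$.

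Additivity in $\al$ uses the dual of the second part of Proposition \ref{ll}: two elements can share a common unit extension $\de_{{\syz'}^{-n}M}$, via a pushout along the codiagonal. Naturality in $T$ is clear, since pulling back along $g:T'\rt T$ just replaces $\alpha_1$ by $\alpha_1 g$. Additivity in $\ga$ is immediate.

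\textbf{Step 2: surjectivity.} First, every natural transformation is determined by its value on a cosyzygy sequence. Let $\be=\de_{\syz^{-n}M}\delta\in\Ext^{n+1}_{\C}(\syz^{-(n+1)}M, M)$ with $\delta:\syz^{-n}M\rt P_n\rt\syz^{-(n+1)}M$ and $P_n$ $n$-injective. Given $\al\in\Ext^{n+1}_{\C}(T,M)$, take a right unit factorization $\al=\de'g$ with $g:T\rt{\syz'}^{-(n+1)}M$ (see \ref{ss}(3)) and a co-angled pair. Naturality of $\Phi$ in the contravariant variable then gives $\Phi_T(\al)$ from $\Phi_{\syz^{-(n+1)}M}(\be)$, dual to Proposition \ref{epi}.

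Next, consider the long exact sequence in the first variable for $\delta$:
$$\Ext^n_{\C}(\syz^{-n}M,N)\st{\Delta}\lrt\Ext^{n+1}_{\C}(\syz^{-(n+1)}M,N)\lrt\Ext^{n+1}_{\C}(P_n,N)=0.$$
Here the last group vanishes because $P_n$ is $n$-projective. So $\Delta$, which sends $\ga$ to $\ga\delta$, is onto. Choose $\ga$ with $\ga\delta=\Phi_{\syz^{-(n+1)}M}(\be)$. By construction $\Phi_\ga(\be)=\ga\delta$, so $\Phi_\ga$ and $\Phi$ agree on $\be$, hence everywhere.

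\textbf{Step 3: the kernel is $\p$.} By Step 1, $\p\subseteq\ker\Phi$. Conversely, if $\Phi_\ga=0$, then $\ga\delta=0$. Exactness of
$$\Ext^n_{\C}(P_n,N)\rt\Ext^n_{\C}(\syz^{-n}M,N)\rt\Ext^{n+1}_{\C}(\syz^{-(n+1)}M,N)$$
gives $\ga=\eta i$ with $i:\syz^{-n}M\rt P_n$. By Remark \ref{lr}, this means $\ga\in\p$.

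\textbf{Main obstacle.} I expect the hardest part to be the well-definedness in Step 1: the bookkeeping with co-angled pairs and the dual of \cite[Proposition 5.8]{bfss}. The exactness arguments in Steps 2 and 3 are routine.
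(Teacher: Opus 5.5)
Your proposal is correct and follows the paper's route. The paper proves this theorem only by saying one dualizes Propositions \ref{3p1}, \ref{p33}, \ref{epi} and Theorem \ref{nat}, using that $n$-injectives coincide with $n$-projectives. Your Steps 1--3 carry out exactly that dualization: the dual of Lemma \ref{pp} via $n$-injectivity, the contravariant long exact sequence of $\syz^{-n}M\rt P_n\rt\syz^{-(n+1)}M$ for surjectivity, and Remark \ref{lr} to identify the kernel with $\p$.
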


\begin{cor}\label{cc}Let $M, N$ be objects of $\C$ and $\de_N\in\Ext^n_{\C}(N, \syz^nN)$ a unit extension. Then there is an isomorphism $$\Ext^n_{\C}(M, \syz^nN)/{\p}\cong\nat(\Ext^{n+1}_{\C}(-, M), \Ext^{n+1}_{\C}(-, N)).$$
\end{cor}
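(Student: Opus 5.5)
By Theorem \ref{th3}, with $\syz^{-n}M$ chosen via a unit extension $\de_{\syz^{-n}M}\in\Ext^n_{\C}(\syz^{-n}M, M)$, we already have
$$\nat(\Ext^{n+1}_{\C}(-, M), \Ext^{n+1}_{\C}(-, N))\cong\Ext^n_{\C}(\syz^{-n}M, N)/{\p}.$$
It therefore suffices to construct an isomorphism
$$\Ext^n_{\C}(\syz^{-n}M, N)/{\p}\cong\Ext^n_{\C}(M, \syz^nN)/{\p}.$$
The plan is to realise both groups as $\Ext^{2n}_{\C}(\syz^{-n}M,\syz^nN)$ modulo a suitable subgroup. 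The maps used are splicing with the unit extensions $\de_N$ and $\de_{\syz^{-n}M}$.

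Concretely, define
$$\lambda:\Ext^n_{\C}(\syz^{-n}M, N)\to\Ext^n_{\C}(M,\syz^nN)$$
as follows. Given $\ga$, splice to get $\de_N\ga\de_{\syz^{-n}M}$? This has the wrong length, so instead use dimension shifting. The connecting maps for the extensions making up $\de_N$ give iterated maps
$$\Ext^n_{\C}(X, N)\to\Ext^{2n}_{\C}(X,\syz^nN),\qquad \eta\mapsto\de_N\eta.$$
Dually, the pieces of $\de_{\syz^{-n}M}$ give
$$\Ext^n_{\C}(M, Y)\to\Ext^{2n}_{\C}(\syz^{-n}M, Y),\qquad \eta\mapsto\eta\de_{\syz^{-n}M}.$$
Each building block is a short extension $K\to P\to K'$ with $P$ $n$-projective, hence $n$-injective. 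Therefore $\Ext^{i}_{\C}(-,P)=\Ext^i_{\C}(P,-)=0$ for $i>n$. It follows that in degrees $\ge n+1$ the long exact sequence of Theorem \ref{long} gives isomorphisms. In degree $n$ it gives a surjection whose kernel consists exactly of the $\p$-extensions, by the argument of Lemma \ref{p} and its dual via Remark \ref{lr}.

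Composing these, $\ga\mapsto\de_N\ga$ induces a map
$$\Ext^n_{\C}(\syz^{-n}M,N)/\p\to\Ext^{2n}_{\C}(\syz^{-n}M,\syz^nN)/(\text{image of }\p).$$
For $n\ge1$ I will show this map is an isomorphism onto
$$\Ext^{2n}_{\C}(\syz^{-n}M,\syz^nN)=\Ext^{n+n}.$$
The first step $\Ext^n\to\Ext^{n+1}$ is onto with kernel $\p$. The subsequent steps $\Ext^{n+j}\to\Ext^{n+j+1}$ are isomorphisms. The symmetric statement holds for $\eta\mapsto\eta\de_{\syz^{-n}M}$ on $\Ext^n_{\C}(M,\syz^nN)/\p$. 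Composing one isomorphism with the inverse of the other yields the desired iso. The case $n=0$ is trivial, since then $\syz^0N=N$ and $\syz^{0}M=M$.

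The main obstacle is the first-step kernel claim in the $n$-Frobenius setting. Lemma \ref{p} was stated for projective middle terms, and the converse direction needs care. If $\delta\ga=0$ with $\delta:\syz^{n+1}N\to P_n\to\syz^nN$, then $\ga=h\al$ for some $\al\in\Ext^n(-,P_n)$, and this is a $\p$-extension by definition with $P_n$ $n$-projective. This is exactly the argument already used in Theorem \ref{nat}. The dual, for left splicing, uses the characterization of Remark \ref{lr}. I would also check independence of the choice of unit extensions through angled pairs and Remark \ref{invert}, but since only the existence of an isomorphism is asserted, fixing choices suffices.

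Alternatively, more directly, I would combine Theorem \ref{nat} and Theorem \ref{th3} after identifying both natural-transformation groups with $\Ext^{2n}_{\C}(\syz^{-n}M,\syz^nN)$, whichever proves cleaner.
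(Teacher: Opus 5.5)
Your proof is correct and follows essentially the paper's route. You reduce to Theorem~\ref{th3} and identify $\Ext^n_{\C}(\syz^{-n}M,N)/\p$ with $\Ext^n_{\C}(M,\syz^nN)/\p$ by dimension shifting along the unit extensions, using that $n$-projectives are $n$-injective so the degree-$n$ connecting map is onto with kernel exactly $\p$. The only difference is bookkeeping: the paper zig-zags (one shift in the second variable, then one back in the first) and so never leaves degrees $n$ and $n+1$, whereas you push both sides up to the common group $\Ext^{2n}_{\C}(\syz^{-n}M,\syz^nN)$. That costs the extra easy observation that the connecting maps are bijective in degrees $\geq n+1$, but it gives the same isomorphism, described cleanly by the relation $\de_N\ga=\ga'\de_{\syz^{-n}M}$.
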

\begin{proof}
In view of Theorem \ref{th3}, it suffices to show that for a given unit extension $\de_{\syz^{-n}M}\in\Ext^n_{\C}(\syz^{-n}M, M)$, an isomorphism  $\Ext^n_{\C}(\syz^{-n}M, N)/{\p}\cong\Ext^n_{\C}(M, \syz^nN)/{\p}$ holds true. To do this, assume that $\de_N:=\syz^nN\rt P_{n-1}\rt\cdots\rt P_0\rt N$. Since $P_0$ is $n$-projective, and so, it is an $n$-injective object, $\Ext^{n+1}_{\C}(-, P_0)=0$. Thus, we give an exact sequence $$\Ext^n_{\C}(\syz^{-n}M, P_0)\lrt\Ext^n_{\C}(\syz^{-n}M, N)\lrt\Ext^{n+1}_{\C}(\syz^{-n}M, \syz N)\lrt 0.$$This yields an isomorphism $\Ext^n_{\C}(\syz^{-n}M, N)/{\p}\cong\Ext^{n+1}_{\C}(\syz^{-n}M, \syz N)$. Similarly, considering the unit extension $\syz^{-n+1}M\rt Q_n\rt\syz^{-n}M$, and applying Remark \ref{lr}, one gets an isomorphism $\Ext^n_{\C}(\syz^{-n+1}M, \syz N)/{\p}\cong \Ext^{n+1}_{\C}(\syz^{-n}M, \syz N)$. Combining the latter two isomorphisms would give us the isomorphism of groups $\Ext^n_{\C}(\syz^{-n}M, N)/{\p}\cong\Ext^n_{\C}(\syz^{-n+1}M, \syz N)/{\p}$. Now one may continue this argument repeatedly, and deduce that $\Ext^n_{\C}(\syz^{-n}M, N)/{\p}\cong\Ext^n_{\C}(M, \syz^n N)/{\p}$, as desired.
\end{proof}

Since $\C$ is an $n$-Frobenius category, as noted in \cite[Remark 2.4]{bfss}, it is a $k$-Frobenius category with $n$-$\proj\C=k$-$\proj\C$, for all integers $k>n$. So, as a direct consequence of Theorem \ref{nat} and Corollary \ref{cc}, we include the following result.
\begin{cor}\label{ccc}Let $M, N$ be objects of $\C$ and $k\geq n$ an integer, and let $\de_N\in\Ext^k_{\C}(N, \syz^kN)$ be a unit extension. Then one has the following isomorphisms of abelian groups
\[\begin{array}{lllll}
\Ext^k_{\C}(M, \syz^kN)/{\p} & \cong \nat(\Ext^{k+1}_{\C}(N, -), \Ext^{k+1}_{\C}(M, -))\\
& \cong \nat(\Ext^{k+1}_{\C}(-, M), \Ext^{k+1}_{\C}(-, N)).
\end{array}\]
\end{cor}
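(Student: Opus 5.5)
The plan is to reduce Corollary \ref{ccc} to Theorem \ref{nat} and Corollary \ref{cc}, applied with $k$ in place of $n$. The key input is \cite[Remark 2.4]{bfss}: an $n$-Frobenius category $\C$ is also $k$-Frobenius for every $k\geq n$, and $k$-$\proj\C=n$-$\proj\C$. So we regard $\C$ as a $k$-Frobenius category throughout.

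First, we check that the ingredients are unchanged by this change of viewpoint. A unit extension $\de_N\in\Ext^k_{\C}(N,\syz^kN)$ with respect to $n$-projectives is also a unit extension with respect to $k$-projectives, since the two classes coincide. The subgroup $\p\subseteq\Ext^k_{\C}(M,\syz^kN)$ defined via $n$-projectives (Remark \ref{ppp}) is likewise the same as the one defined via $k$-projectives. Hence the quotient $\Ext^k_{\C}(M,\syz^kN)/{\p}$ in the statement is literally the group appearing in Theorem \ref{nat} for the $k$-Frobenius structure.

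Second, we apply Theorem \ref{nat} with $n$ replaced by $k$. It gives the isomorphism $\Ext^k_{\C}(M,\syz^kN)/{\p}\cong\nat(\Ext^{k+1}_{\C}(N,-),\Ext^{k+1}_{\C}(M,-))$ induced by $\ga\mapsto\Psi_\ga$. Third, we apply Corollary \ref{cc}, again with $k$ in place of $n$. It identifies the same quotient with $\nat(\Ext^{k+1}_{\C}(-,M),\Ext^{k+1}_{\C}(-,N))$. This uses Theorem \ref{th3} for the $k$-Frobenius structure, together with the dimension-shifting isomorphisms $\Ext^k_{\C}(\syz^{-k}M,N)/{\p}\cong\Ext^k_{\C}(M,\syz^kN)/{\p}$. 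Those isomorphisms in turn need $\Ext^{k+1}_{\C}(-,P)=0$ for $P$ $k$-projective; this holds because $P$ is $n$-injective and $k+1>n$. Chaining the two isomorphisms gives the claim.

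The only genuine point to verify, and the step I expect to need care, is that every earlier argument used only the $n$-Frobenius axioms, so that substituting $k$ for $n$ is legitimate. This covers the angled pairs from \ref{ss}, Remark \ref{invert}, Lemma \ref{pp}, and the vanishing $\Ext^{k+1}_{\C}(-,P_k)=0$ used for surjectivity of $\Delta^k_M$. Each of these depends only on the class of $k$-projective objects equalling the class of $k$-injective objects and being enough, which \cite[Remark 2.4]{bfss} supplies. No new computation is needed.
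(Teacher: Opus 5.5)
Your proposal is correct and takes the same route as the paper, which obtains Corollary \ref{ccc} as a direct consequence of Theorem \ref{nat} and Corollary \ref{cc}, after noting via \cite[Remark 2.4]{bfss} that $\C$ is $k$-Frobenius with $k$-$\proj\C=n$-$\proj\C$. Your extra checks fill in details the paper leaves implicit: that unit extensions and the subgroup $\p$ are unchanged by the switch, and that $\Ext^{k+1}_{\C}(-,P)=0$ for $n$-projective $P$.
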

The above corollary immediately yields the following interesting result.
\begin{cor}For any pair of objects $M, N\in\C$ and an integer $k\geq n$,  the groups $\nat(\Ext^{k+1}_{\C}(M, -), \Ext^{k+1}_{\C}(N, -))$ and $\nat(\Ext^{k+1}_{\C}(-, M), \Ext^{k+1}_{\C}(-, N))$ form a set.
\end{cor}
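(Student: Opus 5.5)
The plan is to read the statement off from Corollary \ref{ccc} and then check the one set-theoretic point this leaves open. For the covariant group, I will apply Corollary \ref{ccc} with the roles of $M$ and $N$ interchanged. I fix a unit extension $\de_M\in\Ext^k_{\C}(M,\syz^kM)$, and the corollary gives
$$\nat(\Ext^{k+1}_{\C}(M,-),\Ext^{k+1}_{\C}(N,-))\cong\Ext^k_{\C}(N,\syz^kM)/{\p}.$$
For the contravariant group, the second isomorphism of Corollary \ref{ccc}, taken as stated, gives
$$\nat(\Ext^{k+1}_{\C}(-,M),\Ext^{k+1}_{\C}(-,N))\cong\Ext^k_{\C}(M,\syz^kN)/{\p}.$$
A class that is in bijection with a set is itself a set. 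So it is enough to show that the quotient groups $\Ext^k_{\C}(X,Y)/{\p}$, for $X,Y\in\C$, are sets.

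This is where care is needed, since Yoneda $\Ext^k$ is a priori only a class of equivalence classes. I would go through the unit factorizations of \ref{ss}(3). Fix one unit extension $\de_X\in\U_k(X)$ with left end $\syz^kX$. Every $\ga\in\Ext^k_{\C}(X,Y)$ has a $\luf$ $\ga=g\de'$ with $\de'\in\U_k(X)$ and $g:{\syz'}^kX\rt Y$. Choose an angled pair $\de_X\st{a}\rt\de''\st{b}\lf\de'$ as in \ref{ss}(2); here $a$ and $b$ are inflations with $n$-projective cokernels, hence $n$-$\Ext$-invertible. The aim is to show that, modulo $\p$, $\ga$ has the form $h\de_X$ for some morphism $h:\syz^kX\rt Y$. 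The natural candidate is $h=gb^{-1}a$ up to $\p$, obtained using the notation $=_{\p}$ and Remark \ref{invert}. Lemma 5.2 and Proposition 5.8 of \cite{bfss} were already used in the same way in the proof of Proposition \ref{3p1}, and I would reuse them here.

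Once that holds, the assignment $h\mapsto h\de_X+\p$ is a surjection from $\hom_{\C}(\syz^kX,Y)$ onto $\Ext^k_{\C}(X,Y)/{\p}$. Since $\C$ is a full subcategory of the abelian category $\A$, this Hom is a set, so the quotient is a set. Applying this with $(X,Y)=(N,\syz^kM)$ and with $(X,Y)=(M,\syz^kN)$ finishes the proof.

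The main obstacle is the reduction $\ga\equiv h\de_X$ modulo $\p$ for a single fixed unit extension. The difficulty is that $b$ is only invertible modulo $\p$, not as a genuine morphism, so $gb^{-1}$ is not a morphism of $\C$. I would try to get around this as follows. First, push the factorization $\ga=g\de'$ forward along $b$ to a factorization through $\de''$, extending $g$ over the inflation $b$ modulo a $\p$-term. Then pull back through $a$. The key input is that the cokernel of $b$ is $n$-projective, hence $n$-injective, so $\Ext^{k+1}$ into it vanishes. If that step stalls, the fallback is to argue directly via the long exact sequences of Theorem \ref{long}, as in the proof of Corollary \ref{cc}. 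This identifies $\Ext^k_{\C}(X,Y)/{\p}$ with an image inside $\Ext^{k+1}_{\C}(X,\syz Y)$ attached to a fixed unit extension, and that image can be checked to be a set in the same way.
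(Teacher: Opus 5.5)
Your reduction is exactly the paper's argument. Corollary~\ref{ccc} handles both groups: apply it with $M$ and $N$ interchanged for the covariant one, and as stated for the contravariant one. This puts both in bijection with groups $\Ext^k_{\C}(X,Y)/{\p}$. The paper stops there, because by its standing convention (\ref{gener}(c)) $\Ext^k_{\C}(X,Y)$ is a set, hence so is every quotient of it.

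You should discard the extra step you add to prove set-ness from scratch, because its key claim is false once $n\geq 1$. For a fixed unit extension $\de_X$, the map $\hom_{\C}(\syz^kX,Y)\to\Ext^k_{\C}(X,Y)/{\p}$, $h\mapsto h\de_X+\p$, need not be surjective. Take $R=F[[t^2,t^3]]$ with $F$ a field, and $\C=\md R$. This is a $1$-Frobenius category whose $1$-projectives are the modules of finite projective dimension (Example~\ref{ex}(2)). Since $R/(t^2)$ has $F$-basis $1,\overline{t^3}$, the sequence $F\to R/(t^2)\to F$ that embeds $F$ as the socle is a unit extension $\de_X\in\U_1(F)$ with $\syz F=F$. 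So the image of $\hom_R(F,F)\cong F$ is at most one-dimensional. However, the argument in the proof of Corollary~\ref{cc} gives $\Ext^1_R(F,F)/{\p}\cong\Ext^2_R(F,F)$. This is two-dimensional, because $R$ is a hypersurface of embedding dimension $2$; in fact $\p(F,F)=0$ here. Using $F\to R/(t^2)\to R/(t^2)\to F$ gives the same failure for $k=2>n$.

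This is precisely the point you flagged. The obstruction to extending $g\colon{\syz'}^kX\to Y$ along the inflation $b$ is a class in $\Ext^1_{\C}(C,Y)$, where $C$ is the cokernel of $b$. The $n$-projectivity of $C$ kills only $\Ext^i_{\C}(C,-)$ for $i>n$. The vanishing of $\Ext^{k+1}_{\C}(-,C)$ that you invoke does not touch this obstruction, and the example shows that no correction modulo $\p$ repairs it. Your fallback is circular: it merely trades $\Ext^k_{\C}(X,Y)/{\p}$ for $\Ext^{k+1}_{\C}(X,\syz Y)$, whose set-ness is the same question. So keep your first paragraph and finish, as the paper does, with the fact that Yoneda $\Ext$ groups are sets.
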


Since over Frobenius categories (or equivalently, 0-Frobenius categories), $\p$-extensions are those morphisms that factor through a projective object, the case $n=0$ of Theorem \ref{nat} and Corollary \ref{cc} recovers Theorems 7 and 8 of \cite{mart}, which is the Hilton-Rees theorem for Frobenius categories. Indeed, we have the result below.

\begin{cor}\label{c44}Let $\C$ be a Frobenius category. Then  for any two objects $M, N\in\C$, one has the following isomorphisms of abelian groups
\[\begin{array}{lllll}
\underline{\hom}_{\C}(M, N) & \cong \nat(\Ext^{1}_{\C}(N, -), \Ext^{1}_{\C}(M, -))\\
& \cong \nat(\Ext^{1}_{\C}(-, M), \Ext^{1}_{\C}(-, N)).
\end{array}\]
\end{cor}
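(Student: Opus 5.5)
This is the case $n=0$ of Theorem \ref{nat} and Corollary \ref{cc}, once the quotient $\Ext^0_{\C}(M, \syz^0N)/{\p}$ is identified with $\uhom_{\C}(M,N)$. So the plan is to specialise those two results and then check that the subgroup $\p$ is exactly the ideal of morphisms factoring through projectives.

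The first step is the degenerate setting at $n=0$. A Frobenius category is precisely a $0$-Frobenius category. Its $0$-projective objects are those $P$ with $\Ext^i_{\C}(P,-)=0$ for all $i\geq 1$, which are exactly the projective objects; dually, the $0$-injectives are the injectives. For $k=0$, a unit extension $\de_N\in\Ext^0_{\C}(N,\syz^0N)$ is just the identity of $N$, so $\syz^0N=N$ and $\Ext^0_{\C}(M,\syz^0N)=\hom_{\C}(M,N)$.

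The second step is to identify $\p$. By Definition \ref{x} with $t=0$, an element $\ga\in\hom_{\C}(M,N)$ lies in $\p$ exactly when $\ga=f\be$ for some $\be:M\rt P$ and $f:P\rt N$ with $P$ $0$-projective, that is, with $P$ projective. Push-out along $f$ in degree $0$ is just composition. Hence $\p$ is the subgroup of morphisms factoring through a projective object, and $\Ext^0_{\C}(M,N)/{\p}=\uhom_{\C}(M,N)$, as in the proof of Corollary \ref{c33}.

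The third step applies Corollary \ref{ccc} with $k=n=0$. It yields
\[\uhom_{\C}(M,N)\cong\nat(\Ext^1_{\C}(N,-),\Ext^1_{\C}(M,-))\cong\nat(\Ext^1_{\C}(-,M),\Ext^1_{\C}(-,N)).\]
As a consistency check, since $\de_N$ is the identity, the map $\Psi_{\ga}$ of Remark \ref{3s1} is $\al\mapsto\al\ga$, i.e. $\Ext^1(\ga,-)$, as noted in Corollary \ref{c2}.

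The main obstacle is that the machinery behind Theorem \ref{nat} and Corollary \ref{cc} must still make sense at $n=0$. There, unit extensions of length $0$ are identities, the angled pairs in Proposition \ref{3p1} are trivial, and the inductive dimension shift in Corollary \ref{cc} runs zero times, so that $\Ext^0(\syz^{0}M,N)/{\p}=\Ext^0(M,\syz^0N)/{\p}$ holds trivially. I would verify that nothing breaks in this degenerate case. If some step seems to need $n\geq1$, the fallback for the first isomorphism is Corollary \ref{c33}, since a Frobenius category has enough projectives. For the second, I would dualise Corollary \ref{c33}, using that $\C$ has enough injectives and that injectives equal projectives, so that $\uhom$ taken modulo injectives coincides with $\uhom$ taken modulo projectives.
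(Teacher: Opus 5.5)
Your proposal is correct and follows the paper's own route: the paper obtains this corollary by specialising Theorem \ref{nat} and Corollary \ref{cc} to $n=0$, after observing that in degree $0$ the $\p$-extensions are exactly the morphisms factoring through a projective object, so that the quotient is $\uhom_{\C}(M,N)$. Your fallback via Corollary \ref{c33} and its dual is also valid but not needed, since those arguments degenerate harmlessly at $n=0$.
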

We close this paper by presenting several interesting examples of $n$-Frobenius categories that satisfy the  isomorphisms appeared in Corollary \ref{ccc}.

\begin{example}Let $(R, \m)$ be a commutative Gorenstein local ring with $\dim R=d$.\\
(1) Since by \cite[Theorem 10.2.4]{ej}, $d$-th syzygy of any $R$-module is Gorenstein projective, one may see that the category $R$-$\Md$ of all $R$-modules is a $d$-Frobenius category, where $d$-projectives are modules of finite projective dimension. So, for any pair of $R$-modules $M, N$ and $k\geq d$, the following isomorphisms of groups
\[\begin{array}{lllll}
\Ext^k_R(M, \syz^kN)/{\p} & \cong \nat(\Ext^{k+1}_R(N, -), \Ext^{k+1}_R(M, -))\\
& \cong \nat(\Ext^{k+1}_R(-, M), \Ext^{k+1}_R(-, N)),
\end{array}\]
hold. We should emphasize that $\syz^kN$ is obtained by using $d$-projectives. Furthermore, if $M$ and $N$ are assumed to be finitely generated modules, then  the Auslander-Gruson-Jensen duality would imply that these will be isomorphic to the natural transformations $\nat(\Tor_{k+1}^R(M, -), \Tor_{k+1}^R(N, -))$ of homological functors.
\\ (2)  Assume that $\Q:\bullet\lrt\bullet$ is a quiver. Then the path algebra $R\Q$ is a Gorenstein order in the sense of Auslander \cite{auslander1967functors}. Assume that $\G$ is the category consisting of all lattices over $R\Q$, that is, those $R\Q$-modules $M$ which are finitely generated Gorenstein projective over $R$. As noted in  Example \ref{ex}, $\G$ will be a 1-Frobenius subcategory of $R\Q$-$\Md$.
So for any $X, Y\in\G$ and $i\geq 1$, the following isomorphisms hold
\[\begin{array}{lllll}
\Ext^i_{\G}(X, \syz^iY)/{\p} & \cong \nat(\Ext^{i+1}_{\G}(Y, -), \Ext^{i+1}_{\G}(X, -))\\
& \cong \nat(\Ext^{i+1}_{\G}(-, X), \Ext^{i+1}_{\G}(-, Y)).
\end{array}\]
These will be also isomorphic to the group $\nat(\Tor_{i+1}^{R\Q}(X, -), \Tor_{i+1}^{R\Q}(Y, -))$, whenever $X, Y$ are finitely generated modules.\\ (3) Assume that $R$ is complete and $d>0$. As mentioned in Example \ref{ex}(3), the category $\C$ of all artinian $R$-modules is a $d$-Frobenius category, without having projective objects. Then for any pair of objects $M, N\in\C$ and $k\geq d$, one has the following isomorphisms:  
\[\begin{array}{lllll}
\Ext^k_{\C}(M, \syz^kN)/{\p} & \cong \nat(\Ext^{k+1}_{\C}(N, -), \Ext^{k+1}_{\C}(M, -))\\
& \cong \nat(\Ext^{k+1}_{\C}(-, M), \Ext^{k+1}_{\C}(-, N)).
\end{array}\]

\end{example}




\end{document}